\newcommand{\scal}[2]{\left\langle #1 , #2\right\rangle}
\newcommand{\normlp}[2]{\left\| #1\right\|_{#2}}
\def\diam{{\mathrm diam}\,}
\newcommand{\qedbox}{ \fbox{}}
\newenvironment{proofequidense}{\noindent\textsc{Proof of the theorem \ref{equidense}: }}{\hfill$\qedbox$}
\newenvironment{prooflipschitz}{\noindent\textsc{Proof of Theorem \ref{Lipschitz} : }}{\hfill$\qedbox$}
\newenvironment{proofchavelpinc}{\noindent\textsc{Proof of Theorem \ref{ChavelPinc}}}{\hfill$\qedbox$}
\newenvironment{prooftheoremcurvature}{\noindent\textsc{Proof Theorems \ref{distancefrontspheremodel} and \ref{Hausdorffcourbure} :}}{\hfill$\qedbox$}
\newenvironment{proofnewversion}{\noindent\textsc{Proof of Theorems \ref{distanceballmodel} and \ref{Hausdorffcourbureball} :}}{\hfill$\qedbox$}
\newtheorem{theorem}{Theorem}
\newtheorem{definition}{Definition}
\newtheorem{lemma}{Lemma}
\newtheorem{proposition}{Proposition}
\newtheorem{remark}{Remark}
\def\R{\mathbb{R}}
\def\N{\mathbb{N}}
\def\S{\mathbb{S}}
\def\B{\mathbb{B}}
\def\Vol{{\mathrm Vol}\,}
\def\H{{\mathrm H}}
\def\Bf{{\mathrm B}}
\def\Card{{\mathrm Card} }
\def\front{\mathcal F}
\def\dhn{d{\mathcal H}^n}
\def\dhnpi{d{\mathcal H}^{n+1}}
\def\Vol{{\mathrm Vol}\,}
\def\div{{\mathrm div}}
\def\dist{{\mathrm dist}}
\def\hn{{\mathcal H}^n}
\def\h1{{\mathcal H}^1}
\def\dil{{\mathrm dil}}
\begin{document}
\title[]{On the boundary of almost isoperimetric domains}

\subjclass[2000]{53A07, 53C21}

\keywords{Isoperimetric inequality}

\author[E. AUBRY, J.-F. GROSJEAN]{Erwann Aubry, Jean-Fran\c cois GROSJEAN}

\address[E. Aubry]{Université Côte d'azur, CNRS, LJAD; 28 avenue Valrose, 06108 Nice, France}
\email{eaubry@unice.fr}

\address[J.-F. Grosjean]{Institut \'Elie Cartan (Math\'ematiques), Universit\'e de Lorraine, B.P. 239, F-54506 Vand\oe uvre-les-Nancy cedex, France}
\email{jean-francois.grosjean@univ-lorraine.fr}

\date{\today}

\begin{abstract} We prove that finite perimeter subsets of $\R^{n+1}$ with small isoperimetric deficit have boundary Hausdorff-close to a sphere up to a subset of small measure. We also refine this closeness under some additional a priori integral curvature bounds. As an application, we answer a question raised by B. Colbois concerning the almost extremal hypersurfaces for Chavel's inequality.

\end{abstract}

\maketitle

\section{Introduction}
In all the paper, $B_x(r)$ and $S_x(r)$ denote respectively the Euclidean ball and sphere with center $x$ and radius $r$ in $\R^{n+1}$. We also set $\B^k$ the unit ball centred at $0$ in $\R^k$ and $\S^{k-1}$ the unit sphere centred at $0$ in $\R^k$.
\smallskip

For any Borel set $\Omega$ of $\R^{n+1}$, we denote $|\Omega|$ its Lebesgue measure, $P(\Omega)$ its perimeter (see definition in section 2) and $I(\Omega)=\frac{P(\Omega)}{|\Omega|^{\frac{n}{n+1}}}$ its isoperimetric ratio. Then it satisfies the isoperimetric inequality
\begin{equation}\label{inegisop}
I(\Omega)\geqslant I\bigl(\B^{n+1}\bigr)
\end{equation}
with equality if and only if $\Omega$ is a Euclidean ball up to set of Lebesgue measure $0$.
To study the stability of the isoperimetric inequality, we denote by
$$\delta(\Omega):=\frac{I(\Omega)}{I(\B^{n+1})}-1$$
the isoperimetric deficit of a Borel set $\Omega$ of finite perimeter and address the following question: 

\smallskip\centerline{\it"How far from a ball are almost isoperimetric domains?(i.e. with small $\delta(\Omega)$)"} 
\medskip\noindent More quantitatively, by stability of the isoperimetric inequality, we understand the validity of an inequality of the form
$$\hbox{"distance" from }\Omega\hbox{ to some ball }\leqslant C \delta(\Omega)^{1/\alpha}\hbox{ for a given category of }\Omega\subset\mathbb{R}^{n+1}$$
where the "distance" need to be defined and where $C$ and $\alpha$ are some positive universal constants.
Many authors have studied this stability problem with the Fraenkel asymmetry $\mathcal{A}(\Omega)$ as distance function. We recall that

$$\mathcal{A}(\Omega):=\inf_{x\in\R^{n+1}}\frac{|\Omega\Delta B_{x}(R_\Omega)|}{|\Omega|}\hbox{ for } \Omega\subset \R^{n+1}$$
where $R_\Omega$ is given by $R_\Omega^{n+1}|\B^{n+1}|=|\Omega|$ and $U\Delta V=(U\setminus V)\cup(V\setminus U)$. So the isoperimetric inequality is said stable with respect to the Fraenkel asymmetry if there exists $C(n),\alpha(n)>0$ such that
\begin{equation}\label{Frineq}
\mathcal{A}(\Omega)\leqslant C(n)\delta(\Omega)^{1/\alpha(n)}
\end{equation}
holds for a given category of domains $\Omega\subset \mathbb{R}^{n+1}$.

Such inequalities were first obtained for domains of $\R^2$ by Bernstein (\cite{Ber}) and Bonnesen (\cite{Bon1}). The first result in higher dimension was due to Fuglede (\cite{Fug1}) for convex domains. Without convexity assumption, the main contributions are due to Hall, Haymann, Weitsman (see \cite{Hal} and \cite{HalHayWei}) who established this inequality with $\alpha(n)=4$, and later to Fusco, Maggi and Pratelli who proved  this inequality with the sharp exponent $\alpha(n)=2$ in \cite{FusMagPrat} (see also the paper of Figalli, Magelli and Pratelli (\cite{FigMagPra}) or \cite{CicLeo} and \cite{FusJul} for other proofs of this last result).

\medskip\noindent To get more precise informations on the geometry of almost isoperimetric domains than a small Fraenkel asymmetry, we can take as "distance" function the Hausdorff distance. The first result in that direction was the following inequality proved by Bonnesen (\cite{Bon1}) for convex curves and by Fuglede (\cite{Fug2}) in the general case: if $\partial\Omega$ is a  $C^1$-piecewise closed curve there exists a Euclidean circle $\mathcal{C}$ such that
\begin{equation}\label{courbe} 16\pi d_H^2({\mathcal C},\partial\Omega)\leqslant P(\Omega)^2-4\pi |\Omega|\leqslant 4\pi |\Omega|\delta(\Omega)\bigl(2+\delta(\Omega)\bigr)
\end{equation}
where \ $d_H$\  denotes\  the\  Hausdorff\  distance. Note that assuming $\delta(\Omega)\leqslant 1$ and using\  the\  isodiametric\  inequality\  $|\Omega|\leqslant \frac{\pi}{4}(\diam\Omega)^2$, we infer the following inequality
\begin{equation}\label{courbe2} \frac{d_H({\mathcal C},\partial\Omega)}{\diam \Omega}\leqslant\frac{\sqrt{3\pi}}{4} \delta(\Omega)^\frac{1}{2}
\end{equation}
However, this result is false for more general domains in $\R^2$, especially non connected one (consider for instance the disjoint union of a large ball and a tiny one far from each other). Moreover, in higher dimension $n\geqslant 2$, even for connected smooth domains, we cannot expect  to control the Hausdorff distance from $\partial\Omega$ to a sphere by the isoperimetric deficit alone, as proves the sets obtained by adding or subtracting to a ball a thin tubular neighbourhood of a Euclidean subset of dimension  not larger than $n-1$ (see for instance \cite{Car}). So to generalize this kind of stability result in higher dimension, it is necessary to assume additional informations on the geometry of the domains we consider.
In \cite{Fug1} Fuglede proved that if $n\geqslant 3$, $\Omega$ is a convex set and $\delta(\Omega)$ small enough then
\begin{equation}\label{fuglede}
\inf_{x\in\R^{n+1}}\frac{d_H(\Omega,B_{x}(R_{\Omega}))}{R_\Omega}\leqslant C(n)\delta(\Omega)^\frac{2}{n+2}.
\end{equation}
($\delta(\Omega)^\frac{2}{n+2}$ is replaced by $\sqrt{\delta(\Omega)}$ for $n=1$ and by $(\delta(\Omega)\log[1/\delta(\Omega)])^{1/2}$ for $n=2$). Note that since $\Omega$ is convex, $\partial\Omega$ is also close to a sphere of radius $R_{\Omega}$. Actually, Fuglede deals with more general sets called nearly spherical domains and this Fuglede's result has been generalized by Fusco, Gelli and Pisante (\cite{FusGelPis}) for any set of finite perimeter satisfying an interior cone condition.

In this paper, we prove generalizations of inequalities \eqref{courbe2} and \eqref{fuglede} to any smooth domain (even nonconvex) with integral control on the mean curvature of the boundary. 
We even get a weak Hausdorff control for almost isoperimetric domains that need no additional assumption on their boundary.

\subsection{No assumption on the boundary} Let $\front(\Omega)$ be the reduced boundary of $\Omega$ (see the section 2 for the definition). When $\Omega$ is a smooth domain, we have $\front(\Omega)=\partial\Omega$.
\begin{theorem}\label{Hausdorff} Let $\Omega$ be a set of $\R^{n+1}$ with finite perimeter with $\delta(\Omega)\leqslant\frac{1}{C(n)}$. There exists $x_\Omega\in\R^{n+1}$ and $A(\Omega)\subset\front(\Omega)$ such that
\begin{enumerate} 
\item $\displaystyle\frac{\mathcal{H}^n(\front(\Omega)\setminus A(\Omega))}{P(\Omega)}\leqslant C(n)\delta(\Omega)^\frac{1}{4}$,
\item $\displaystyle\frac{d_H\bigl(A(\Omega),S_{x_\Omega}(R_{\Omega})\bigr)}{R_\Omega}\leqslant C(n)\delta(\Omega)^{\beta(n)}$.
\end{enumerate}
Here $\mathcal{H}^n$ denotes the $n$-dimensional Hausdorff measure and $\beta(n)=\min(\frac{1}{4n},\frac{1}{8})$.
\end{theorem}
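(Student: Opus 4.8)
The plan is to combine the known quantitative stability of the isoperimetric inequality in the Fraenkel-asymmetry form with a covering/concentration argument that upgrades an $L^1$-type closeness (small symmetric difference) to the Hausdorff closeness asserted in (2), at the price of discarding a piece $\front(\Omega)\setminus A(\Omega)$ of the reduced boundary whose measure is controlled by (1). First I would invoke the sharp Fusco--Maggi--Pratelli inequality \eqref{Frineq}: after rescaling so that $|\Omega|=|\B^{n+1}|$ (hence $R_\Omega=1$), there is a point $x_\Omega$ with $|\Omega\,\Delta\, B_{x_\Omega}(1)|\leqslant C(n)\delta(\Omega)^{1/2}$, and correspondingly $P(\Omega)\leqslant (1+\delta(\Omega))P(\B^{n+1})$. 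Translating so $x_\Omega=0$, the strategy is then to show that for most points $p\in\front(\Omega)$, $p$ is Hausdorff-close to $S_0(1)$, i.e.\ $\big||p|-1\big|$ is small, and conversely that $\front(\Omega)$ comes close to every point of $S_0(1)$.

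The core geometric mechanism is a two-sided \emph{density/deviation estimate}. For the first direction: if a definite proportion of $\front(\Omega)$ (say $\mathcal{H}^n$-measure $\geqslant\eta P(\Omega)$) were to lie at distance $\geqslant\varepsilon$ from $S_0(1)$, then that part of $\partial\Omega$ would enclose (together with a piece of the model sphere) a region of volume bounded below in terms of $\eta$ and $\varepsilon$ lying in the symmetric difference $\Omega\,\Delta\,B_0(1)$; since the latter has volume $\lesssim\delta(\Omega)^{1/2}$, we get $\eta\varepsilon^{\,?}\lesssim\delta(\Omega)^{1/2}$. Choosing $\varepsilon\sim\delta(\Omega)^{\beta(n)}$ and $\eta\sim\delta(\Omega)^{1/4}$ with the exponents balanced by the power of $\varepsilon$ that appears (this is where the $\min(\tfrac1{4n},\tfrac18)$ in $\beta(n)$ and the $\tfrac14$ in (1) come from) makes the bad set $A(\Omega)^c:=\{p\in\front(\Omega):\dist(p,S_0(1))>\varepsilon\}$ satisfy (1). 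The cleanest way to make the enclosed-volume lower bound rigorous is via the divergence theorem / the coarea formula applied to the position field $X(x)=x$, using that on $\front(\Omega)$ the reduced boundary carries a measure-theoretic outer normal $\nu$ with $\int_{\front(\Omega)}\scal{x}{\nu}\,\dhn=(n+1)|\Omega|$ and comparing with the analogous identity for $B_0(1)$; the deficit $P(\Omega)-P(\B^{n+1})$ and the volume deficit together pin down how much of $\front(\Omega)$ can have $|x|$ far from $1$ or normal far from radial. I expect an earlier ``concentration'' lemma in the paper (the \texttt{proofconcentration} environment suggests one named \texttt{concentration}) supplies exactly this kind of estimate, so I would cite it rather than reprove it.

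For the second direction — showing $\front(\Omega)$ gets within $\varepsilon$ of \emph{every} $q\in S_0(1)$, which is the other half of the Hausdorff bound $d_H(A(\Omega),S_0(1))\leqslant\varepsilon$ — I would argue by contradiction: if some $q\in S_0(1)$ had $\dist(q,\front(\Omega))>\varepsilon$, then the ball $B_q(\varepsilon)$ meets $\front(\Omega)$ in a set of measure zero, so $\Omega\cap B_q(\varepsilon)$ is (relatively) open and closed in $B_q(\varepsilon)$, hence $B_q(\varepsilon)\subset\Omega$ or $B_q(\varepsilon)\subset\Omega^c$ (up to null sets); either way $B_q(\varepsilon)\cap B_0(1)$ is entirely inside or outside $\Omega$, and since $q\in S_0(1)$ this half-ball-sized region, of volume $\gtrsim\varepsilon^{n+1}$, lies in $\Omega\,\Delta\,B_0(1)$, again forcing $\varepsilon^{n+1}\lesssim\delta(\Omega)^{1/2}$, i.e.\ $\varepsilon\lesssim\delta(\Omega)^{1/(2(n+1))}$, consistent with $\beta(n)\leqslant\tfrac1{4n}$. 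Finally I must check $A(\Omega)$ is nonempty and actually $\varepsilon$-dense \emph{from its own side} (every point of $A(\Omega)$ is within $\varepsilon$ of $S_0(1)$ by definition, and the density argument just given shows no point of $S_0(1)$ is farther than $\varepsilon$ from $\front(\Omega)$; one then notes the points of $\front(\Omega)$ realizing this are automatically in $A(\Omega)$ since they are within $\varepsilon$ of $S_0(1)$), and undo the rescaling to reinstate $R_\Omega$. The main obstacle is the first, quantitative direction: getting the correct power of $\varepsilon$ in the enclosed-volume lower bound — a naive argument loses too much and gives a worse exponent than $\min(\tfrac1{4n},\tfrac18)$ — so the delicate point is organizing the comparison of the two divergence identities (or invoking the precise concentration lemma) so that the deviation of $|x|$ from $1$ on a set of measure $\eta P(\Omega)$ costs volume at least $c(n)\eta\varepsilon$ when $\varepsilon$ is small, which after balancing against $\delta^{1/2}$ and the $\delta^{1/4}$ budget in (1) yields exactly $\beta(n)=\min(\tfrac1{4n},\tfrac18)$.
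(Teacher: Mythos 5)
Your proposal is correct in its essentials but takes a genuinely different, and in fact more elementary, route for the crucial half of part (2). For part (1) you are on the paper's track: it follows from Lemma \ref{concentration}, which is Markov's inequality applied to the $L^1$-bound $\int_{\front(G)}\bigl||x-x_\Omega|-R_G\bigr|\,\dhn\lesssim|G|\delta(\Omega)^{1/2}$ coming from Theorem \ref{inegaliteG}~(4); your ``enclosed-volume'' heuristic is not quite how that estimate is actually obtained, but you correctly defer to the lemma rather than reprove it. For part (2), the paper's argument for the inward direction is the density theorem (Theorem \ref{equidense}): one shows $\hn(\front(\Omega)\cap B_x(\rho))/P(\Omega)$ is within $C(n)\delta^{1/4}$ of the corresponding spherical density, at any scale $\rho\geqslant C(n)\delta^{1/8}R_\Omega$, and then subtracts the concentration loss to find a point of $A(\Omega)$ — not merely of $\front(\Omega)$ — in $B_x(\rho)$. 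The exponent $\beta(n)=\min(\tfrac{1}{4n},\tfrac18)$ is exactly the product of those two scales ($\rho\gtrsim\delta^{1/8}R_\Omega$ for \ref{equidense} to apply, and $(\rho/R_\Omega)^n\gtrsim\delta^{1/4}$ so that the density beats the error). You bypass all of this by the direct topological/volume argument: if $B_q(\varepsilon)\cap\front(\Omega)=\emptyset$ then, since $|D\chi_\Omega|=\hn\lfloor\front(\Omega)$ by De Giorgi's structure theorem, $\chi_\Omega$ is a.e.\ constant on the connected ball $B_q(\varepsilon)$, so a half-ball of volume $\gtrsim\varepsilon^{n+1}$ lands in $\Omega\Delta B_{x_\Omega}(R_\Omega)$, contradicting the Fraenkel-asymmetry bound once $\varepsilon\geqslant C(n)\delta^{1/(2(n+1))}R_\Omega$. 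Crucially, you then take $A(\Omega):=\front(\Omega)\cap A_{\varepsilon/R_\Omega}$ rather than the paper's thinner $\front(\Omega)\cap A_{\delta^{1/4}}$, which is what makes the point of $\front(\Omega)$ you find automatically lie in $A(\Omega)$; the concentration lemma then yields (1) with exponent $\tfrac12-\tfrac1{2(n+1)}=\tfrac{n}{2(n+1)}\geqslant \tfrac14$. So the route is valid, and since $\tfrac1{2(n+1)}>\min(\tfrac1{4n},\tfrac18)$ for every $n\geqslant1$ it even gives a strictly larger $\beta(n)$ — your remark that the balancing yields ``exactly'' $\beta(n)=\min(\tfrac1{4n},\tfrac18)$ is therefore not accurate; that number is an artefact of the paper's two-step density machinery, while your own argument gives $\tfrac1{2(n+1)}$. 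What your shortcut gives up is precisely the quantitative density control of Theorem \ref{equidense}, which the paper later reuses in Section \ref{Allard} to feed Allard-type regularity in the proof of Theorem \ref{Lipschitz}; for the Hausdorff statement alone, your argument is both sufficient and sharper.
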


\medskip\noindent In other words, the boundary  $\front(\Omega)$  is Hausdorff close to a sphere up to a set of small measure. Note that  we have
$$A(\Omega)=\front(\Omega)\cap A_{\delta(\Omega)^\frac{1}{4}}$$
where for any $\eta>0$ we set $A_{\eta}=\bigl\{x\in\R^{n+1}/\, \bigl||x-x_\Omega|-R_{\Omega}\bigr|\leqslant R_{\Omega}\eta\bigr\}$.

\begin{remark}\label{DomainHausdorff}Note that the sets of the previous theorem also satisfy
\begin{enumerate} 
\item $\displaystyle\frac{|\Omega\Delta B_{x_\Omega}(R_{\Omega})|}{|\Omega|}\leqslant C(n)\delta(\Omega)^{1/2}$ 
,
\item $\displaystyle\frac{d_H\bigl(\Omega\cap B_{x_\Omega}(R_{\Omega}),B_{x_\Omega}(R_{\Omega})\bigr)}{R_\Omega}\leqslant C(n)\delta(\Omega)^{\frac{1}{2(n+1)}}$ (see the end of the section \ref{sec:Prel}).
\end{enumerate}
In other words $\Omega\cap B_{x_\Omega}(R_\Omega)$ is Hausdorff close to the ball $B_{x_\Omega}(R_\Omega)$ up to a set of small measure, which is a weak generalization of inequality \eqref{fuglede}.
\end{remark}

\begin{remark}
When $n=1$ or $\Omega$ convex Theorem \ref{Hausdorff} easily implies  earlier results  \`{a} la Bonnesen \cite{Bon1} and Fuglede \cite{Fug1} but with non optimal power $\beta(n)$.
\end{remark}

\begin{remark}
See also Theorem \ref{Preiss} in Section \ref{ProximityPreiss} that is a reformulation of Theorem \ref{Hausdorff} in term of Preiss distance between the normalized measures associated to $\front(\Omega)$ and $S_{x_\Omega}(R_{\Omega})$.
\end{remark}

\medskip\noindent To get informations on the smooth domain $\Omega$ itself, and not up to a set of small measure, additional assumptions are required. A reasonable assumption is an integral control on the mean curvature $\H$. In the sequel, for any $p\geqslant 1$, we define 
$$\|f\|_p=\displaystyle\left(\frac{1}{P(\Omega)}\int_{\partial\Omega}|f|^p\dhn\right)^{1/p}\hbox{ for any measurable  }f:\partial\Omega\to\R.$$
\medskip\noindent Note that a upper bound on $\|\H\|_p$ with $p< n-1$ is not sufficient. Indeed,  we can refer to examples constructed by the authors in \cite{AubGro1,AubGro2}: by adding small tubular neighbourhood of well chosen trees to $B_0(1)$, we get a set almost isoperimetric domains on which $\|\H\|_p$ is uniformly  bounded for any $p<n-1$ and that is dense for the Hausdorff distance among all the closed set of $\R^{n+1}$ that contain $B_0(1)$.

\subsection{Upper bound on $\|\H\|_{n-1}$}

\begin{theorem}\label{distancefrontspheremodel} Let $\Omega$ be an open set with a smooth boundary $\partial\Omega$, finite perimeter and $\delta(\Omega)\leqslant\frac{1}{C(n)}$. There exists a subset $T$ of $\R^{n+1}$ which satisfies 
whose $1$-dimensional Hausdorff measure satisfies 
\begin{enumerate}
\item $\mathcal{H}^1(T)\leqslant C(n)R_\Omega \displaystyle\int_{\partial\Omega\setminus A_{\delta(\Omega)^{1/4}}}|\H|^{n-1}\dhn$,
\item $d_H\bigl(\partial\Omega,S_{x_\Omega}(R_{\Omega})\cup T\bigr)\leqslant C(n)R_\Omega\delta(\Omega)^{\beta(n)}$,
\item the set $A_{\delta(\Omega)^{1/4}}\cup T$ has at most $N+1$ connected components,
\end{enumerate}
where $\mathcal{H}^1(T)$ denotes the 1-dimensional Hausdorff measure of $T$ and $N$ is the number of the connected components of $\partial\Omega$ that do not intercept $A_{\delta(\Omega)^\frac{1}{4}}$.
\end{theorem}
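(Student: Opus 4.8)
The plan is to bootstrap Theorem~\ref{Hausdorff}, using the curvature hypothesis only to control the part of $\partial\Omega$ that stays far from the model sphere. After rescaling we may assume $R_\Omega=1$, so that the bound in (1) reads $\mathcal H^1(T)\le C(n)\int_{\partial\Omega\setminus A_{\delta(\Omega)^{1/4}}}|\H|^{n-1}\dhn$ (the factor written in the statement being the scale correction). Apply Theorem~\ref{Hausdorff} to get $x_\Omega\in\R^{n+1}$ and $A(\Omega)=\partial\Omega\cap A_{\delta(\Omega)^{1/4}}$ with its two conclusions, and write $S=S_{x_\Omega}(1)$, $\varepsilon=C(n)\delta(\Omega)^{\beta(n)}$ and $B=\partial\Omega\setminus A_{\delta(\Omega)^{1/4}}$. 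Since $\beta(n)\le\tfrac14$ and $\delta(\Omega)\le 1$, every point of $A_{\delta(\Omega)^{1/4}}$ is within $\delta(\Omega)^{1/4}\le\varepsilon$ of $S$; combined with conclusion~(2) of Theorem~\ref{Hausdorff} this shows that $S$ lies in the $\varepsilon$-neighbourhood of $A(\Omega)\subset\partial\Omega$ and $A(\Omega)$ in the $\varepsilon$-neighbourhood of $S$. Hence everything reduces to building $T\subset\R^{n+1}$ such that $\mathcal H^1(T)\le C(n)\int_B|\H|^{n-1}\dhn$, such that $B$ lies in the $\varepsilon$-neighbourhood of $S\cup T$ and $T$ in the $\varepsilon$-neighbourhood of $\partial\Omega$, and such that $A_{\delta(\Omega)^{1/4}}\cup T$ has at most $N+1$ connected components; conclusions (1)--(3) then follow by taking unions.

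Next I would decompose $B$. Up to replacing $\delta(\Omega)^{1/4}$ by a comparable value $\eta\in[\delta(\Omega)^{1/4},2\delta(\Omega)^{1/4}]$ for which the spheres $\{|x-x_\Omega|=1\pm\eta\}$ meet $\partial\Omega$ transversally --- a Sard-type choice that only enlarges $A_{\delta(\Omega)^{1/4}}$ into a comparable annulus and so preserves the conclusions of Theorem~\ref{Hausdorff} up to constants --- the set $B$ is a smooth compact $n$-manifold with boundary $\partial B\subset\partial\Omega\cap\partial A_\eta\subset A(\Omega)$. Let $\Sigma_1,\dots,\Sigma_m$ be the connected components of $B$: exactly $N$ of them are closed (these are the components of $\partial\Omega$ that avoid $A_\eta$), and each of the others has $\partial\Sigma_i\neq\emptyset$ with $\partial\Sigma_i\subset A(\Omega)$, which is what will anchor its skeleton to the annulus.

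The core of the argument is the construction of a one-dimensional skeleton $T_i$ of each $\Sigma_i$. If $\int_{\Sigma_i}|\H|^{n-1}\dhn$ is below a suitable dimensional multiple of $\varepsilon$, then by a Topping-type diameter estimate --- recall that $\diam\Sigma\le C(n)\int_\Sigma|\H|^{n-1}\dhn$ for a compact connected hypersurface $\Sigma\subset\R^{n+1}$, with an analogous bound for $\dist(\cdot,\partial\Sigma)$ when $\partial\Sigma\neq\emptyset$ --- the whole of $\Sigma_i$ already lies within $\varepsilon$ of $A(\Omega)$ (when $\partial\Sigma_i\neq\emptyset$) or has diameter $<\varepsilon$ (when $\Sigma_i$ is closed, in which case we set $T_i$ equal to a single point of $\Sigma_i$). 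Otherwise, fix the scale $\rho=c(n)\delta(\Omega)^{\beta(n)}\le\varepsilon$, pick a maximal $\rho$-separated subset $P_i\subset\Sigma_i$ (adding one point of $\partial\Sigma_i$ when it is non-empty), so that the balls $B_p(\rho)$ for $p\in P_i$ cover $\Sigma_i$, join $p\sim q$ whenever $B_p(\rho)\cap B_q(\rho)\neq\emptyset$ --- this yields a connected graph since $\Sigma_i$ is connected --- and let $T_i$ be the union of the segments of a spanning tree of that graph. Then $T_i$ is connected, contains a point of $\partial\Sigma_i$ when the latter is non-empty, lies within $\rho$ of $\Sigma_i$, satisfies $\Sigma_i\subset\{x:\dist(x,T_i)\le\rho\}$, and $\mathcal H^1(T_i)\le 2\rho\,\#P_i$. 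To bound $\#P_i$ I would split $P_i$ according to whether the monotonicity formula is available at scale $\rho$: at a centre $p$ away from $\partial\Sigma_i$ with $\int_{\Sigma_i\cap B_p(r)}|\H|\dhn\le\epsilon_0(n)r^{n-1}$ for all $r\le\rho/2$ one gets $\mathcal H^n(\Sigma_i\cap B_p(\rho/2))\ge c_0(n)\rho^n$, and since these half-balls are disjoint and $\mathcal H^n(\Sigma_i)\le\mathcal H^n(B)\le C(n)\delta(\Omega)^{1/4}P(\Omega)$ by conclusion~(1) of Theorem~\ref{Hausdorff}, the inequality $n\beta(n)\le\tfrac14$ makes $\rho^n$ comparable to this area bound and so keeps the number of such ``good'' centres bounded by a dimensional constant; the remaining ``bad'' centres carry a definite amount of curvature mass at some scale $\le\rho/2$, so a Vitali covering argument bounds their number by $C(n)\rho^{-(n-1)}\int_{\Sigma_i}|\H|\dhn$, and H\"older's inequality together once more with the area estimate turns this into $\#P_i\le C(n)\rho^{-1}\int_{\Sigma_i}|\H|^{n-1}\dhn$ --- and it is exactly here that the constraint $\beta(n)\le\tfrac1{4n}$ enters.

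Finally I would set $T=T_1\cup\dots\cup T_m$ and read off the conclusions. Summing $\mathcal H^1(T_i)\le 2\rho\,\#P_i\le C(n)\int_{\Sigma_i}|\H|^{n-1}\dhn$ (and $\mathcal H^1(T_i)\le C(n)\int_{\Sigma_i}|\H|^{n-1}\dhn$ already in the small-curvature case) over $i$ gives~(1). For~(2): every point of $A(\Omega)$ is within $\varepsilon$ of $S$; every point of $B$ lies within $\rho\le\varepsilon$ of the relevant $T_i$ or, in the small-curvature case, within $C(n)\varepsilon$ of $S$ via $A(\Omega)$; conversely $S$ is within $\varepsilon$ of $A(\Omega)\subset\partial\Omega$ and each $T_i$ within $\rho$ of $\Sigma_i\subset\partial\Omega$, so $d_H(\partial\Omega,S\cup T)\le C(n)\varepsilon$. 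For~(3): $A_\eta$ is a connected annulus, each $T_i$ with $\partial\Sigma_i\neq\emptyset$ meets it (it contains a point of $\partial\Sigma_i\subset A(\Omega)\subset A_\eta$) and so adds no component, while each of the $N$ closed pieces $\Sigma_i$ contributes at most one; hence $A_{\delta(\Omega)^{1/4}}\cup T$ has at most $N+1$ components. The main obstacle is the skeleton length estimate of the third paragraph: controlling $\#P_i$ not by the (easily bounded) area of $\Sigma_i$ but by its $(n-1)$-st curvature integral, uniformly and without any $L^\infty$ bound on $\H$, which requires the simultaneous bookkeeping of the monotonicity formula, the dichotomy on local curvature mass, and the two a priori bounds coming from Theorem~\ref{Hausdorff}, and which is precisely what pins down the exponent $\beta(n)=\min(\tfrac1{4n},\tfrac18)$.
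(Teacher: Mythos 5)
Your strategy parts company with the paper almost immediately: the authors do not reprove any tree-building estimate but invoke Theorem~\ref{arbre} (from their earlier preprint \cite{AubGro2}) as a black box, applied once to the closed hypersurface $\partial_r\Omega$ (the union of the components of $\partial\Omega$ meeting the annulus) together with the closed set $A_0=\partial\Omega\cap A_{\delta(\Omega)^{1/4}}$, and once more to each of the $N$ remaining closed components. Their exponent $\beta(n)=\min(\tfrac1{4n},\tfrac18)$ is then just the minimum of the $1/(4n)$ coming from $d_H(A_0\cup T_0,\partial_r\Omega)\le C\,\hn(\partial_r\Omega\setminus A_0)^{1/n}\le C\,R_\Omega\,\delta(\Omega)^{1/(4n)}$ in Theorem~\ref{arbre} and of the $1/8$ that already appears in Theorem~\ref{Hausdorff}(2); no additional tuning of $\rho$ or of exponents is needed. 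You instead cut $\partial\Omega$ along a Sard-transversal annulus into pieces with boundary and attempt to rebuild the tree estimate by hand via Topping's diameter inequality, a $\rho$-net, a spanning tree, and a dichotomy driven by the monotonicity formula.

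That reconstruction has a genuine gap and a misattribution. The gap is in the count of ``bad'' centres: you declare a point bad when $\int_{\Sigma_i\cap B_p(r_p)}|\H|>\epsilon_0 r_p^{n-1}$ at \emph{some} scale $r_p\le\rho/2$, but since the scales $r_p$ are free to be arbitrarily small, the disjointness of the balls $B_p(r_p)$ only yields $\sum_p r_p^{n-1}\le\epsilon_0^{-1}\int_{\Sigma_i}|\H|$ --- this does not bound $\#(\text{bad})$ by $\rho^{-(n-1)}\int_{\Sigma_i}|\H|$ as you claim, precisely because nothing forces $r_p\gtrsim\rho$. Fixing this (e.g.\ a multi-scale bookkeeping/Besicovitch argument, or the careful covering scheme of \cite{AubGro2}) is the actual content of Theorem~\ref{arbre}, and is substantially harder than the sketch suggests; moreover the threshold $\int_{B_p(r)}|\H|\le\epsilon_0 r^{n-1}$ for all $r$ is not sufficient for the $L^1$ monotonicity formula as you invoke it, since the resulting $\int_0^{\rho/2}r^{-1}\,dr$ diverges. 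The misattribution concerns $\beta(n)$: in your scheme it appears as the balance condition $n\beta(n)\le 1/4$ needed to make the covering estimates close, whereas in the paper it is already fixed by Theorem~\ref{Hausdorff} and is \emph{independent} of the curvature bound and of any choice of covering scale. If you want to keep your self-contained route, the right move is to state and prove (or cite) the analogue of Theorem~\ref{arbre} in full, rather than reprove it implicitly inside the counting step.
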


Note that by Theorem \ref{Hausdorff} at least one connected component of $\partial\Omega$ intercepts $A_{\delta(\Omega)^\frac{1}{4}}$ and so if $\partial\Omega$ is connected then we have $N=0$ and $A_{\delta(\Omega)^{1/4}}\cup T$ is connected. Moreover note that for $n=1$ we recover Fuglede's result \eqref{courbe} for $C^2$-piecewise closed curves. 

The case $N=\infty$ in Theorem \ref{distancefrontspheremodel} is trivial since the sets obtained by the union of a sphere and infinitely numebrable many points are dense for the Hausdorff distance among all the closed sets containing $S_{x_{\Omega}}(R_\Omega)$.

 Similarly to the case of curves, Theorem \ref{distancefrontspheremodel} is quite optimal as prove examples given by a domain $\Omega_{\varepsilon}=\bigl[B_0(R)\setminus\displaystyle\bigcup_i T_{i,\varepsilon}\bigr]\cup\bigcup_jT_{j,\varepsilon}$, where $(T_i)$ and $(T_j)$ are some families of Euclidean trees and the $T_{i,\varepsilon}$ denotes the $\varepsilon$-tubular neighbourhood of $T_i$. In these examples, the integral of $|\H|^{n-1}$ on $\partial\Omega_{\varepsilon}\setminus A_{\delta(\Omega)^\frac{1}{4}}$ will converge, up to a multiplicative constant $C(n)$, to the sum of the length of the trees as $\varepsilon$ tends to $0$.
 
We refer to Theorem \ref{distanceballmodel} of Section \ref{postponed} for a generalization of inequality \eqref{fuglede} similar to Theorem \ref{distancefrontspheremodel}.

\subsection{Bound on $\|\H\|_p$ with $p> n-1$}
If we assume some upper bound on the $L^p$ norm of $|\H|$ with $p>n-1$, then combining Theorem \ref{distancefrontspheremodel} and  Lemma \ref{concentration} with H\"older inequality readily gives the  following improved result.

\begin{theorem}\label{Hausdorffcourbure} Let $p\geqslant n-1$ and $\Omega$ be an open set with a smooth boundary $\partial\Omega$, finite perimeter and $\delta(\Omega)\leqslant\frac{1}{C(n)}$. Let $(\partial\Omega_i)_{i\in I}$ be the connected components of $\partial\Omega$ that do not intercept $A_{\delta(\Omega)^\frac{1}{4}}$. For any $i\in I$, there exists $x_i\in\partial\Omega_i$ such that
\begin{align*}d_H\bigl(\partial\Omega,& S_{x_\Omega}(R_{\Omega})\cup\bigcup_{i\in I}\{x_i\}\bigr)\leqslant C(n,p)R_\Omega\left[\delta(\Omega)^{\beta(n)}+\delta(\Omega)^{\frac{p-n+1}{4p}}(P(\Omega)^{\frac{1}{n}}\|\H\|_p)^{n-1}\right]
\end{align*}
Moreover if $p\geqslant n$ and if $\H$ is $L^p$-integrable then $I$ is finite and we have
\begin{equation}\label{cardinal}
\Card(I)\leqslant C(n,p)P(\Omega)\|\H\|_p^n\delta(\Omega)^\frac{p-n}{4p}.
\end{equation}
\end{theorem}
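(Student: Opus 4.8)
The plan is to deduce the theorem exactly as announced: feed the conclusion of Theorem~\ref{distancefrontspheremodel} into H\"older's inequality and the concentration estimate of Theorem~\ref{Hausdorff}(1) (Lemma~\ref{concentration}), then collapse the auxiliary set $T$ produced by Theorem~\ref{distancefrontspheremodel} to one point per component $\partial\Omega_i$; the bound \eqref{cardinal} will come from the same H\"older argument at the exponent $n$, together with the classical lower bound $\int_\Sigma|\H|^n\dhn\geqslant c(n)>0$ valid for every closed hypersurface $\Sigma\subset\R^{n+1}$, which follows from the Michael--Simon inequality $\mathcal H^n(\Sigma)^{\frac{n-1}{n}}\leqslant C(n)\int_\Sigma|\H|\dhn$ and H\"older. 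So let $x_\Omega$ and $T\subset\R^{n+1}$ be as in Theorem~\ref{distancefrontspheremodel} and write $A=A_{\delta(\Omega)^{1/4}}$; the estimate $d_H(\partial\Omega,S_{x_\Omega}(R_\Omega)\cup T)\leqslant C(n)R_\Omega\delta(\Omega)^{\beta(n)}$ of that theorem already supplies the $\delta(\Omega)^{\beta(n)}$ term in the claimed bound, so it remains to estimate $\mathcal H^1(T)$ and to trade $T$ for finitely many points.

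Since $p\geqslant n-1$, H\"older's inequality on $\partial\Omega\setminus A$ with conjugate exponents $\frac p{n-1}$ and $\frac p{p-n+1}$ gives
\[
\int_{\partial\Omega\setminus A}|\H|^{n-1}\dhn\leqslant\Bigl(\int_{\partial\Omega}|\H|^p\dhn\Bigr)^{\frac{n-1}p}\mathcal H^n(\partial\Omega\setminus A)^{\frac{p-n+1}p}=\bigl(P(\Omega)\|\H\|_p^p\bigr)^{\frac{n-1}p}\mathcal H^n(\partial\Omega\setminus A)^{\frac{p-n+1}p},
\]
and, by Theorem~\ref{Hausdorff}(1), $\mathcal H^n(\partial\Omega\setminus A)=\mathcal H^n(\front(\Omega)\setminus A(\Omega))\leqslant C(n)P(\Omega)\delta(\Omega)^{1/4}$, so the right-hand side is at most $C(n,p)P(\Omega)\|\H\|_p^{n-1}\delta(\Omega)^{\frac{p-n+1}{4p}}$. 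Combining this with the bound for $\mathcal H^1(T)$ in Theorem~\ref{distancefrontspheremodel}(1) and writing $P(\Omega)=P(\Omega)^{1/n}P(\Omega)^{\frac{n-1}n}$ with $P(\Omega)^{1/n}\leqslant C(n)R_\Omega$ (valid since $\delta(\Omega)\leqslant 1/C(n)$) yields
\[
\mathcal H^1(T)\leqslant C(n,p)\,R_\Omega\,\delta(\Omega)^{\frac{p-n+1}{4p}}\bigl(P(\Omega)^{1/n}\|\H\|_p\bigr)^{n-1}=:\rho.
\]

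It remains to replace $T$ by points. Because $S_{x_\Omega}(R_\Omega)\subset A$ and $A$ is a connected annulus, $S_{x_\Omega}(R_\Omega)$ lies in one connected component $C_0$ of $A\cup T$, and by Theorem~\ref{distancefrontspheremodel}(3) the remaining components $C_1,\dots,C_M$ ($M\leqslant\Card(I)$) are connected subsets of $T$, hence of diameter $\leqslant\mathcal H^1(T)\leqslant\rho$; moreover the part of $T$ lying in $C_0$ is joined to $A$ inside $C_0$ by a subset of $T$ of $\mathcal H^1$-measure $\leqslant\rho$, hence lies in the $(\rho+R_\Omega\delta(\Omega)^{1/4})$-neighbourhood of $S_{x_\Omega}(R_\Omega)$. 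Thus collapsing each $C_k$ ($1\leqslant k\leqslant M$) to a point $f_k\in C_k$ and deleting $T\cap C_0$ changes $d_H(\partial\Omega,\cdot)$ by at most $C\rho+R_\Omega\delta(\Omega)^{1/4}$. Each $f_k$ lies within $C(n)R_\Omega\delta(\Omega)^{\beta(n)}$ of some point of $\partial\Omega$ (again Theorem~\ref{distancefrontspheremodel}(2)); since $\diam C_k\leqslant\rho$ and $C_k\cap A=\emptyset$, that nearby point sits on some component $\partial\Omega_{i}$ with $i\in I$ (a component meeting $A$ cannot wander that far from $S_{x_\Omega}(R_\Omega)$ without violating $\mathcal H^1(T)\leqslant\rho$), and we take $x_i$ to be this point; for the finitely many $i\in I$ not obtained this way we choose any $x_i\in\partial\Omega_i$, which does not affect the Hausdorff distance since these $x_i$ lie on $\partial\Omega$ and the corresponding $\partial\Omega_i$ are already within $C\rho+C(n)R_\Omega\delta(\Omega)^{\beta(n)}$ of the finite set. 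Putting the pieces together and using $\delta(\Omega)^{1/4}\leqslant\delta(\Omega)^{\beta(n)}$ gives the first assertion. I expect this last step to be the main obstacle: the delicate part is the bookkeeping matching the connected components of $T$ with the components $\partial\Omega_i$, and in particular verifying that a component of $\partial\Omega$ meeting $A$ stays within $\mathcal H^1(T)+C(n)R_\Omega\delta(\Omega)^{\beta(n)}$ of $S_{x_\Omega}(R_\Omega)$.

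Finally, suppose $p\geqslant n$ and $\H\in L^p(\partial\Omega)$. Each $\partial\Omega_i$ ($i\in I$) is a closed hypersurface, so $\int_{\partial\Omega_i}|\H|^n\dhn\geqslant c(n)$; summing over $i\in I$, using $\partial\Omega_i\subset\partial\Omega\setminus A$, and then applying H\"older's inequality with exponents $\frac pn,\frac p{p-n}$ together with Theorem~\ref{Hausdorff}(1),
\[
c(n)\,\Card(I)\leqslant\int_{\partial\Omega\setminus A}|\H|^n\dhn\leqslant\bigl(P(\Omega)\|\H\|_p^p\bigr)^{\frac np}\bigl(C(n)P(\Omega)\delta(\Omega)^{1/4}\bigr)^{\frac{p-n}p}=C(n,p)\,P(\Omega)\,\|\H\|_p^n\,\delta(\Omega)^{\frac{p-n}{4p}}.
\]
The right-hand side is finite, so $\Card(I)<\infty$, and the displayed chain is exactly \eqref{cardinal}.
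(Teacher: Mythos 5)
Your proposal follows essentially the same route as the paper: H\"older applied to $\int_{\partial\Omega\setminus A}|\H|^{n-1}$ combined with Lemma~\ref{concentration} to convert $\mathcal H^1(T)$ into the claimed $\delta(\Omega)^{\frac{p-n+1}{4p}}(P(\Omega)^{1/n}\|\H\|_p)^{n-1}$ term; then collapse $T$ to a sphere plus finitely many points; and finally Michael--Simon plus H\"older at exponent $n$ for the cardinality bound. That last part, $\Card(I)$, is verbatim what the paper does. The one place where you deviate, and where (as you yourself flag) the bookkeeping gets murky, is the collapse of $T$: you treat $T$ as a black box given by the {\em statement} of Theorem~\ref{distancefrontspheremodel}, and then try to recover a correspondence between the components $C_k$ of $A\cup T$ and the $\partial\Omega_i$. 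This forces you into claims such as ``a component meeting $A$ cannot wander that far from $S_{x_\Omega}(R_\Omega)$'' and ``the corresponding $\partial\Omega_i$ are already within $C\rho+CR_\Omega\delta^{\beta(n)}$ of the finite set'', neither of which is completely watertight as written (a component $C_k$ could sit close to $C_0$ in Euclidean distance while remaining disconnected from it in $A\cup T$, and several $\partial\Omega_i$ could be served by a single $C_k$). The paper avoids all of this by proving Theorems~\ref{distancefrontspheremodel} and~\ref{Hausdorffcourbure} in the same breath, so that $T=T_0\cup\bigcup_{i\in I}T_i$ is constructed with $A_i=\{x_i\}\subset\partial\Omega_i$ fed directly into Theorem~\ref{arbre}: each $T_i$ then contains its own designated $x_i\in\partial\Omega_i$, and $d_H(T_i,\{x_i\})\leqslant\mathcal H^1(T_i)$ is immediate, while $T_0$ stays within $\mathcal H^1(T_0)+R_\Omega\delta(\Omega)^{1/4}$ of $S_{x_\Omega}(R_\Omega)$ because each tree in $T_0$ meets $A_0\subset A$. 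If you similarly argue from the construction of $T$ rather than from the abstract statement of Theorem~\ref{distancefrontspheremodel}, your argument closes without any matching step and becomes identical to the paper's.
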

\begin{remark} We will see in the proof that the above  estimates are more precise since as in Theorem \ref{distancefrontspheremodel}, we can replaced $\|\H\|_ p$ by $\Bigl(\frac{1}{P(\Omega)}\displaystyle\int_{\partial\Omega\setminus A_{\delta(\Omega)^{1/4}}}|\H|^{p}\dhn\Bigr)^\frac{1}{p}$.
\end{remark}
\begin{remark}
If we assume that $\partial\Omega$ is connected, then Theorem \ref{Hausdorffcourbure} implies that $\partial\Omega$ is Hausdorff close to a sphere. If $\partial\Omega$ has N connected component, the it asserts that $\partial\Omega$ is Hausdorff close to a sphere union a finite set with at most $N-1$ points.
\end{remark}
Note that in the case $p<n$ we can not control the cardinal of $I$ in terms of $\|\H\|_p$. Indeed, consider the sequence of domains $\Omega_{k}$ obtained by the union of $\B^{n+1}$ and $k$ balls $B_{x_i}(r_i/k)$ where $x_i$ are some points satisfying for instance $\dist(0,x_i)=2i$. If $\displaystyle\sum_{i\geqslant 0} r_i^{n-p}$ is convergent then $\displaystyle\lim_{k\longrightarrow\infty}\delta(\Omega_k)=0$ and $P(\Omega_k)\|\H_k\|_p$ (where $\H_k$ denotes the mean curvature of $\partial\Omega_k$) remains bounded when $\Card(I)$ tends to infinity.

Here also we refer to Theorem \ref{Hausdorffcourbureball} of Section \ref{postponed} for a version of Theorem \ref{Hausdorffcourbure} generalizing inequality \ref{fuglede}.

\subsection{Bound on $\|\H\|_p$ with $p>n$}

When $p>n$, it follows from \ref{cardinal}  that if $\delta(\Omega)$ is small enough then $I=\emptyset$ and $\partial\Omega$ is Hausdorff close to $S_{x_\Omega}(R_{\Omega})$. More precisely we have that 
  
\begin{theorem}\label{Lipschitz} Let $p>n$. There exists a constant $C(n,p)>0$ such that if $\Omega$ is an open set with smooth boundary $\partial\Omega$ such that $\hn(\partial\Omega)\|\H\|_p^n\leqslant K$ and $\delta(\Omega)\leqslant \frac{1}{C(n,p,K)}$ then $\partial\Omega$ is diffeomorphic and quasi-isometric to $S_{x_\Omega}(R_\Omega)$. Moreover the Lipschitz distance $d_L$ satisfies
$$d_L(\partial\Omega,\S_{x_\Omega}(R_{\Omega}))\leqslant C(n,p)\delta(\Omega)^\frac{2(p-n)}{p(n+2)-2n}$$
for any $n\geqslant 2$ and the Hausdorff distance
$$d_H(\partial\Omega,S_{x_\Omega}(R_\Omega))\leqslant C(n,p,K)R_\Omega\delta(\Omega)^\frac{2p-n}{2p-2n+np}$$
when $n\geqslant 3$ and
$$d_H(\partial\Omega,S_{x_\Omega}(R_\Omega))\leqslant C(p,K)R_\Omega(-\delta(\Omega)\ln\delta(\Omega))^\frac{1}{2}$$
when $n=2$.
\end{theorem}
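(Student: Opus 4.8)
\noindent The plan is to combine the curvature hypothesis with the estimates already proved: first use \eqref{cardinal} to get rid of the ``exceptional'' components, then promote the weak closeness of Theorems \ref{Hausdorff}--\ref{Hausdorffcourbure} to a genuine global graph of $\partial\Omega$ over the sphere, and finally run the Fuglede--type second variation of the isoperimetric ratio to extract the sharp powers of $\delta(\Omega)$. Once it is available, I write $\partial\Omega=\{\,x_\Omega+R_\Omega(1+u(\theta))\theta:\theta\in\S^n\,\}$ for a function $u:\S^n\to\R$.

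\smallskip\noindent\emph{Step 1 (no exceptional components).} Since $\partial\Omega$ is smooth and compact, $\H\in L^p$ and $P(\Omega)=\hn(\partial\Omega)$, so the hypothesis reads $P(\Omega)\|\H\|_p^n\le K$. As $p>n$, \eqref{cardinal} gives $\Card(I)\le C(n,p)K\,\delta(\Omega)^{\frac{p-n}{4p}}$, which is $<1$ once $\delta(\Omega)\le 1/C(n,p,K)$; being an integer, $\Card(I)=0$. Hence every connected component of $\partial\Omega$ meets $A_{\delta(\Omega)^{1/4}}$, and, using $P(\Omega)^{1/n}\|\H\|_p\le K^{1/n}$, Theorem \ref{Hausdorffcourbure} applied with $I=\emptyset$ gives a preliminary bound
\[
d_H\bigl(\partial\Omega,S_{x_\Omega}(R_\Omega)\bigr)\le C(n,p,K)\,R_\Omega\,\delta(\Omega)^{\gamma_0},\qquad \gamma_0=\gamma_0(n,p)>0 .
\]

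\smallskip\noindent\emph{Step 2 (global graph over the sphere --- the main obstacle).} The hard part is to pass from this ``up to a small set'' information to a bona fide global radial graph. One wants to show that $\partial\Omega$ is a closed $C^{1,\alpha}$ hypersurface, $\alpha=1-\tfrac np$, with $C^{1,\alpha}$-norm at most $C(n,p,K)$ and no point of multiplicity $\ge 2$ over $S_{x_\Omega}(R_\Omega)$. For the regularity one invokes Allard's $\varepsilon$-regularity theorem, available precisely because $p>n$ and $\bigl(\int_{\partial\Omega}|\H|^p\dhn\bigr)^{1/p}\le C(n,p,K)R_\Omega^{\frac{n-p}{p}}$, \emph{once} the density ratio of $\partial\Omega$ is close to $1$ at the relevant scales; establishing the latter uniformly, i.e. excluding multiplicity and thin necks, is the delicate point. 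Here the exact identity $P(\Omega)=(1+\delta(\Omega))\,\hn\bigl(S_{x_\Omega}(R_\Omega)\bigr)$ together with Theorem \ref{Hausdorff}(1) already forces the set of rays meeting $\partial\Omega$ at least twice to have solid measure $\le C(n)\delta(\Omega)^{1/4}$, and the curvature hypothesis --- through a counting argument in the spirit of \eqref{cardinal} --- is then used to rule out the residual folding outright. Granting this, $u$ exists with $\|u\|_{C^0}\le C(n,p,K)\delta(\Omega)^{\gamma_0}$ and $\|u\|_{C^{1,\alpha}(\S^n)}\le C(n,p,K)$; and since $u$ solves the quasilinear equation prescribing the mean curvature $\H$, elliptic $L^p$-regularity upgrades this to $\|u\|_{W^{2,p}(\S^n)}\le C(n,p,K)$. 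In particular $\theta\mapsto x_\Omega+R_\Omega(1+u(\theta))\theta$ is a diffeomorphism of $S_{x_\Omega}(R_\Omega)$ onto $\partial\Omega$ with differential $\mathrm{Id}+O(\|u\|_{C^1})$, hence a quasi-isometry as soon as $\|u\|_{C^1}$ is small --- which holds by interpolating $\|u\|_{C^0}\le C\delta(\Omega)^{\gamma_0}$ against the $C^{1,\alpha}$-bound.

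\smallskip\noindent\emph{Step 3 (sharp exponents).} With $u$ in hand and $\|u\|_{C^1}$ small, I apply the nearly-spherical-domain computation of Fuglede (\cite{Fug1}; see also \cite{FusGelPis}): after translating $x_\Omega$ so that the degree-one spherical-harmonic part of $u$ vanishes up to the quadratic error permitted by the volume normalisation, the second variation of the isoperimetric ratio yields $\delta(\Omega)\ge c(n)\,\|u\|_{H^1(\S^n)}^2$, hence $\|u\|_{L^2}+\|\nabla u\|_{L^2}\le C(n)\delta(\Omega)^{1/2}$. Interpolating this $H^1$-smallness against the a priori $W^{2,p}$ (equivalently $C^{1,\alpha}$) bound of Step~2 by a suitable Gagliardo--Nirenberg inequality then gives
\[
\|u\|_{C^0}\le C(n,p)\,\delta(\Omega)^{\frac{2p-n}{2p-2n+np}}\ \ (n\ge 3),\qquad \|u\|_{C^1}\le C(n,p)\,\delta(\Omega)^{\frac{2(p-n)}{p(n+2)-2n}}\ \ (n\ge 2),
\]
the borderline embedding $W^{1,2}(\S^2)\hookrightarrow L^\infty$ turning the first bound into $C(p)\,(-\delta(\Omega)\ln\delta(\Omega))^{1/2}$ when $n=2$. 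Since $d_H(\partial\Omega,S_{x_\Omega}(R_\Omega))\le R_\Omega\|u\|_{C^0}$ and $d_L(\partial\Omega,S_{x_\Omega}(R_\Omega))\le C\|u\|_{C^1}$, these are exactly the announced estimates, the diffeomorphism and quasi-isometry having already been furnished by Step~2. I expect Step~2 to be where essentially all the difficulty lies: everything past the construction of the controlled graph $u$ is interpolation and a classical second-variation argument.
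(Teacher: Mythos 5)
Your overall architecture is the right one --- kill the exceptional components with the cardinality bound, get a global radial graph $u$ over $S_{x_\Omega}(R_\Omega)$, then run Fuglede's second--variation estimates plus Morrey--Campanato interpolation to extract the sharp exponents --- and Steps~1 and~3 are essentially the paper's argument. The problem is Step~2, which you yourself flag as ``the main obstacle''\,: you do not actually prove it. The claim that the identity $P(\Omega)=(1+\delta(\Omega))\hn(S_{x_\Omega}(R_\Omega))$ plus Theorem~\ref{Hausdorff}(1) ``forces the set of rays meeting $\partial\Omega$ at least twice to have solid measure $\le C(n)\delta(\Omega)^{1/4}$'' is plausible at best (a ray can meet $\partial\Omega$ several times all \emph{inside} the annulus $A_{\delta(\Omega)^{1/4}}$, and the radial projection is not area--preserving, so the bookkeeping is not immediate), and the passage from ``folding has small measure'' to ``there is no folding at all'' via ``a counting argument in the spirit of \eqref{cardinal}'' is a pure assertion: \eqref{cardinal} counts connected components of $\partial\Omega$ using Michael--Simon on each component, and there is no obvious analogue that counts multiplicity of the radial projection. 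As written, Step~2 is a gap, not a proof.

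The paper closes exactly this gap by a different mechanism. It never attempts to bound the multiply--covered region. Instead, it controls the unit normal directly: Lemma~\ref{ZL2} gives $\bigl\|\,\nu_\Omega-\tfrac{x-x_\Omega}{|x-x_\Omega|}\,\bigr\|_{L^2(\front(\Omega))}^2\le C(n)\,\delta(\Omega)^{1/2}$; Lemma~\ref{Lipprep} (via Duggan's/Allard's theorem and Morrey--Campanato, once Theorem~\ref{equidense} has provided the density ratio $\le(1+\eta)$ at the Allard scale $\bar\rho\simeq R_\Omega K^{-p/(n(p-n))}$) gives a \emph{uniform} $C^{0,1-n/p}$ modulus for $Z_x=\tfrac{x-x_\Omega}{|x-x_\Omega|}-\nu_x$ at scale $\bar\rho$; and Lemma~\ref{Lipprep2} then \emph{averages} the $L^2$ smallness against this H\"older modulus on balls to promote it to $\|Z\|_{L^\infty}\le C(n,p)K^{1/n}\delta(\Omega)^{1/(n\alpha(n,p))}$. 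Once $\|Z\|_\infty<1/2$, the radial projection $F(x)=R_\Omega\tfrac{x-x_\Omega}{|x-x_\Omega|}$ has $dF_x$ invertible everywhere, hence is a local diffeomorphism; compactness of each component and simple connectivity of $S_{x_\Omega}(R_\Omega)$ make it a global covering, hence a diffeomorphism from each component, and a short area count (integrating $F^{\star}\dhn$ over two hypothetical components) shows there can be only one component. That pointwise normal--control step, obtained by interpolating $L^2$ smallness against Allard--H\"older regularity, is the missing ingredient in your sketch, and simply cannot be replaced by a measure--of--folded--rays estimate without a lot more work.
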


\begin{remark}
Actually, under the assumption of the previous theorem, we show that $\partial\Omega=\{\varphi(w)w,w\in S_{x_\Omega}(R_\Omega)\}$, where $\varphi\in W^{1,\infty}(S_{x_\Omega}(R_\Omega))\cap W^{2,p}(S_{x_\Omega}(R_\Omega))$, with $\|d\varphi\|_\infty\leqslant \frac{C(n,p,K)}{R_\Omega}\delta(\Omega)^\frac{p-n}{2p-2n+pn}$ and $\|\nabla d\varphi\|_p\leqslant C(n,p,K)/R_\Omega^2$. So $\Omega$ is a nearly spherical domain in the sense of Fuglede and is the graph over $S_{x_\Omega}(R_\Omega)$ of a $C^{1,1-\frac{n}{p}}(S_{x_\Omega}(R_\Omega))$ function. It implies that any sequence of domain $(\Omega_k)_k$ with $\delta(\Omega_k)\to0$ and $\hn(\partial\Omega_k)\|\H_k\|_p^n\leqslant K$ converges to $S_{x_\Omega}(R_\Omega)$ in $C^{1,q}$ topology for any $q<1-\frac{p}{n}$.
\end{remark}

\begin{remark}
The estimates on $d_L$ and $d_H$ in Theorem \ref{Lipschitz} are sharp with respect of the exponent of $\delta(\Omega)$ involved, but not for what concern the constant $C(n,p,K)$. We show it by constructing example at the end of section \ref{Allard}. Note moreover that in the case $p=\infty$ we recover the same exponent as in the convex case.
\end{remark}


\subsection{Stability of the Chavel Inequality}
In the last part of this paper we answer a question asked by Bruno Colbois concerning the almost extremal hypersurfaces for the Chavel's inequality: if we set $\lambda_1^{\Sigma}$ the first nonzero eigenvalue  of a compact hypersurface $\Sigma$ that bounds a domain $\Omega$, Chavel's inequality says that 
\begin{equation}\label{Chavel}
\lambda_1^{\Sigma}\leqslant\frac{n}{(n+1)^2}\Bigl(\frac{\hn(\Sigma)}{|\Omega|}\Bigr)^2
\end{equation}
Moreover equality holds if and only if $\Sigma$ is a geodesic sphere. Now if we denote by $\gamma(\Omega)$ the deficit of Chavel's inequality (i.e. $\gamma(\Omega)=\frac{n}{\lambda_1^{\Sigma}(n+1)^2}\Bigl(\frac{\hn(\Sigma)}{|\Omega|}\Bigr)^2-1$), we have 
\begin{theorem}\label{ChavelPinc} Let $\Sigma$ be an embedded compact hypersurface bounding a domain $\Omega$ in $\R^{n+1}$. If $\gamma(\Omega)\leqslant \frac{1}{C(n)}$ then  we have
$$\delta(\Omega)\leqslant C(n)\gamma(\Omega)^{1/2}$$
\end{theorem}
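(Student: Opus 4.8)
The plan is to recover the Chavel deficit $\gamma(\Omega)$ as the product of the defects in the three inequalities that enter the classical proof of \eqref{Chavel}, and then to feed the resulting control on the angle between the position vector of $\Sigma=\partial\Omega$ and its unit normal into an identity obtained by applying the divergence theorem to the unit radial vector field on $\Omega$.

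\textbf{Step 1 (the Chavel chain).} After translating so that $\int_\Sigma X\,d\hn=0$ --- which alters none of $\delta(\Omega),\gamma(\Omega),P(\Omega),|\Omega|,\lambda_1^\Sigma$ --- let $X$ be the position vector field on $\R^{n+1}$, $\nu$ the outer unit normal of $\Sigma$, $A=\hn(\Sigma)=P(\Omega)$, $V=|\Omega|$, $V_1=\int_\Sigma|X|\,d\hn$, $V_2=\int_\Sigma|X|^2\,d\hn$, and assume (as one may for Colbois' question) that $\Sigma$ is connected. Since $\sum_i|\nabla^\Sigma x_i|^2=n$ and the coordinate functions have zero mean on $\Sigma$, the variational characterisation of $\lambda_1^\Sigma$ gives $\lambda_1^\Sigma V_2\leqslant nA$; Cauchy--Schwarz gives $V_1^2\leqslant AV_2$; and $\mathrm{div}\,X=n+1$ gives $(n+1)V=\int_\Sigma\langle X,\nu\rangle\,d\hn\leqslant V_1$. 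Multiplying these,
$$1+\gamma(\Omega)=\frac{nA^2}{\lambda_1^\Sigma(n+1)^2V^2}=\frac{nA}{\lambda_1^\Sigma V_2}\cdot\frac{AV_2}{V_1^2}\cdot\Bigl(\frac{V_1}{(n+1)V}\Bigr)^{2},$$
a product of three factors, each $\geqslant1$. Hence each is $\leqslant1+\gamma(\Omega)$; in particular $V_1-(n+1)V\leqslant(\sqrt{1+\gamma}-1)(n+1)V\leqslant\gamma(\Omega)V_1$ and $AV_2-V_1^2\leqslant\gamma(\Omega)V_1^2$, i.e., with $\bar r:=V_1/A$,
$$\int_\Sigma\bigl(|X|-\langle X,\nu\rangle\bigr)\,d\hn\leqslant\gamma(\Omega)V_1,\qquad \int_\Sigma(|X|-\bar r)^2\,d\hn=\frac{AV_2-V_1^2}{A}\leqslant\gamma(\Omega)A\bar r^{2}.$$

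\textbf{Step 2 (angle defect).} Put $\cos\theta:=\langle X,\nu\rangle/|X|$ on $\Sigma$, so that $|X|(1-\cos\theta)=|X|-\langle X,\nu\rangle\geqslant0$. Splitting $\Sigma$ along $\{|X|\geqslant\bar r/2\}$, where $1-\cos\theta\leqslant\frac2{\bar r}|X|(1-\cos\theta)$, and $\{|X|<\bar r/2\}$, which by the Chebyshev-type second estimate of Step 1 has $\hn$-measure $\leqslant4\gamma(\Omega)A$ and on which $1-\cos\theta\leqslant2$, and using $V_1/\bar r=A$, the first estimate of Step 1 yields $\int_\Sigma(1-\cos\theta)\,d\hn\leqslant C\gamma(\Omega)A$ for an absolute constant $C$.

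\textbf{Step 3 (divergence identity and rearrangement).} Since $X/|X|=\nabla|X|$ and $\Delta|X|=n/|X|$ holds as distributions on $\R^{n+1}$ (no atom at $0$, as $|X|$ is $1$-Lipschitz and $n/|X|\in L^1_{\mathrm{loc}}$), the divergence theorem on $\Omega$ gives, whether or not $0\in\Omega$,
$$\int_\Sigma\cos\theta\,d\hn=\int_\Sigma\langle X/|X|,\nu\rangle\,d\hn=n\int_\Omega\frac{dv}{|X|}.$$
Because $t\mapsto1/t$ is decreasing, $\int_\Omega\frac{dv}{|X|}=\int_0^\infty|\Omega\cap B_0(1/s)|\,ds\leqslant\int_0^\infty\min(|\Omega|,|B_0(1/s)|)\,ds=\int_{B_0(R_\Omega)}\frac{dv}{|X|}$, where $R_\Omega^{\,n+1}|\B^{n+1}|=V$, and the last integral equals $\frac1n(n+1)|\B^{n+1}|^{1/(n+1)}V^{n/(n+1)}$; hence $n\int_\Omega\frac{dv}{|X|}\leqslant I(\B^{n+1})\,V^{n/(n+1)}$.

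\textbf{Step 4 (conclusion and the main difficulty).} From $A=\int_\Sigma\cos\theta\,d\hn+\int_\Sigma(1-\cos\theta)\,d\hn$ and Steps 2--3,
$$P(\Omega)=A\leqslant n\int_\Omega\frac{dv}{|X|}+C\gamma(\Omega)A\leqslant I(\B^{n+1})V^{n/(n+1)}+C\gamma(\Omega)A,$$
so $1+\delta(\Omega)=\dfrac{A}{I(\B^{n+1})V^{n/(n+1)}}\leqslant\dfrac1{1-C\gamma(\Omega)}$, and therefore $\delta(\Omega)\leqslant\dfrac{C\gamma(\Omega)}{1-C\gamma(\Omega)}\leqslant2C\gamma(\Omega)\leqslant2C\gamma(\Omega)^{1/2}$ as soon as $\gamma(\Omega)\leqslant\frac1{2C}$ --- in fact this gives the stronger linear bound, with $C$ absolute. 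The step I expect to be the genuine obstacle is Step 2: the Chavel chain controls only the \emph{$|X|$-weighted} angle defect $\int_\Sigma|X|(1-\cos\theta)\,d\hn$, and one must upgrade this to the unweighted $\int_\Sigma(1-\cos\theta)\,d\hn$, i.e., control the part of $\Sigma$ dipping near the barycentre $0$; this is exactly the role of the Cauchy--Schwarz defect bound $\int_\Sigma(|X|-\bar r)^2\,d\hn\leqslant\gamma(\Omega)A\bar r^{2}$. Steps 1 and 3 are exact, and the rearrangement in Step 3 is a one-liner.
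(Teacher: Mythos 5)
Your proof is correct, and although it opens with the same ``Chavel chain'' $(n+1)V=\int_\Sigma\langle X,\nu\rangle\leqslant \int_\Sigma|X|\leqslant(A\int_\Sigma|X|^2)^{1/2}\leqslant(nA^2/\lambda_1^\Sigma)^{1/2}$ as the paper, it then takes a genuinely different, and sharper, route. The paper introduces the radius $\rho_\Omega:=(n+1)|\Omega|/\hn(\Sigma)$, extracts from the chain the bound $\normlp{|X|-\rho_\Omega}{2}^2\leqslant 3\rho_\Omega^2\gamma(\Omega)$, passes to $\normlp{|X|-\rho_\Omega}{1}\leqslant\sqrt3\,\rho_\Omega\gamma(\Omega)^{1/2}$ by Cauchy--Schwarz (this is exactly where the square root is lost), and then feeds the $L^1$ estimate into the divergence theorem for $Z=(|X|-\rho_\Omega)\tfrac{X}{|X|}$ to control $\bigl||\Omega|-|B_0(\rho_\Omega)|\bigr|$. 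You instead factor the Chavel deficit as a product of the three elementary defects, which gives the \emph{unweighted} $L^1$ angle estimate $\int_\Sigma(1-\cos\theta)\,d\mathcal{H}^n\leqslant 10\,\gamma(\Omega)A$ directly (the Chebyshev step handling the part of $\Sigma$ near the barycentre is the right device for upgrading the $|X|$-weighted defect), and then use the clean divergence identity $\int_\Sigma\cos\theta\,d\mathcal{H}^n=n\int_\Omega|X|^{-1}$ together with the layer-cake rearrangement $n\int_\Omega|X|^{-1}\leqslant I(\B^{n+1})|\Omega|^{n/(n+1)}$. All four steps check out (the distributional identity and the rearrangement are exactly as you state), so you actually obtain the \emph{linear} estimate $\delta(\Omega)\leqslant C\,\gamma(\Omega)$ with an absolute constant, which is strictly stronger than the paper's $\delta(\Omega)\leqslant C(n)\gamma(\Omega)^{1/2}$ and of course implies the statement. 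In short: same starting point, but by controlling the angle defect rather than the radial oscillation and by choosing the vector field $X/|X|$ rather than $(|X|-\rho_\Omega)X/|X|$, you avoid the Cauchy--Schwarz loss entirely.
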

Consequently, $\delta(\Omega)$ can be replaced by $\gamma(\Omega)^\frac{1}{2}$ in all the previous theorems, which gives the stability of the Chavel's inequality. Note moreover that $\gamma$ small implies readily that $\Sigma=\partial \Omega$ is connected and so we have $N=0$ and $I=\emptyset$ is this case.

\section{Preliminaries}\label{sec:Prel}
\subsection{Definitions}
First let us introduce some notations and recall some definitions used in the paper. Throughout the paper we adopt the notation that $C(n,k,p,\cdots)$ is function which depends on $p$, $q$, $n$, $\cdots$. It eases the exposition to disregard the explicit
nature of these functions. The convenience of this notation is that even though $C$ might change from line to line in a calculation it still maintains these basic features.

Given two bounded sets $A$ and $B$ the Hausdorff distance between $A$ and $B$ is defined by
$$d_H(A,B)=\inf\{\varepsilon\mid A\subset B_{\varepsilon}\ \text{and}\ B\subset A_{\varepsilon}\}$$
where for any subset $E$, $E_{\varepsilon}=\{x\in\R^{n+1}\mid \dist(x,E)\leqslant\varepsilon\}$.

Let $\mu$ be a $\R^{n+1}$-valued Borel measure on $\R^{n+1}$. Its total variation is the nonnegative measure $|\mu|$ defined on  any Borel set $\Omega$ by 
$$|\mu|(\Omega):=\sup\left\{\sum_{k\in\N}\|\mu(\Omega_k)\| \ \mid \ \Omega_i\cap \Omega_j=\emptyset\ , \ \bigcup_{k\in\N}\Omega_k\subset \Omega\right\}$$
Given a Borel set $\Omega$ of $\R^{n+1}$, we say that $\Omega$ is of finite perimeter if  the distributional gradient $D\chi_\Omega$ of its characteristic function is a $\R^{n+1}$-valued Borel measure such that $|D\chi_\Omega|(\R^{n+1})<\infty$. The perimeter  of $\Omega$ is then  $P(\Omega):=|D\chi_\Omega|(\R^{n+1})$. Of course if $\Omega$ is a bounded domain with a smooth boundary we have $P(\Omega)=\hn(\partial\Omega)$. For any set $\Omega$ with finite perimeter, we have $P(\Omega)=\hn\bigl(\front(\Omega)\bigr)$ where $\front(\Omega)$ is the reduced boundary defined by
$$\front(\Omega) :=\left\{x\in\R^{n+1}\ \mid\  \forall r>0\ ,\ |D\chi_\Omega|(B_x(r))>0\ \text{and} \ \lim_{r\longrightarrow 0^+}\frac{D\chi_\Omega(B_x(r))}{|D\chi_\Omega|(B_x(r))}\in\S^n\right\}$$
Moreover Federer (see \cite{AmbFusPal}) proved that $\front(\Omega)\subset\partial^{\star}\Omega$ where $\partial^{\star}\Omega$ is the essential boundary of $\Omega$ defined by
$$\partial^{\star}\Omega:=\R^{n+1}\setminus(\Omega^0\cup \Omega^1)$$
where $\Omega^t:=\displaystyle\left\{x\in\R^{n+1}\ \mid\ \lim_{r\longrightarrow 0}\frac{|\Omega\cap B_x(r)|}{|B_x(r)|}=t\right\}$.

\subsection{Some results proved in \cite{FigMagPra}}
Now we gather some results proved in \cite{FigMagPra} about almost isoperimetric sets, that will be used in this paper.
\begin{theorem}\label{inegaliteG}({\sc A. Figalli, F. Maggi, A. Pratelli}, \cite{FigMagPra}) Let $\Omega$ be a set of $\R^{n+1}$ of finite perimeter, with $0<|\Omega|<\infty$ and $\delta(\Omega)\leqslant\min\left(1,\displaystyle\frac{k(n)^2}{8}\right)$ where $k(n):=\displaystyle\frac{2-2^{\frac{n}{n+1}}}{3}$. 
Then there exists a domain $G\subset\Omega$ such that
\begin{enumerate}
\item $0\leqslant|\Omega|-|G|\leqslant|\Omega\setminus G|\leqslant\frac{\delta(\Omega)}{k(n)}|\Omega|$,
\item $P(G)\leqslant P(\Omega)$,
\item $\delta(G)\leqslant\frac{3}{k(n)}\delta(\Omega)$,
\item There exists a point $x_\Omega\in\R^{n+1}$ such that $$\displaystyle\int_{\front(G)}\bigl||x-x_\Omega|-R_{G}\bigr|\dhn\leqslant\frac{10(n+1)^3}{k(n)}|G|\delta(\Omega)^{1/2}$$
where $X$ is the vector position of $\R^{n+1}$,
\item $|G\Delta B_{x_\Omega}(R_G)|\leqslant \frac{20(n+1)^3}{k(n)}|G|\delta(\Omega)^{1/2}$.
\end{enumerate}
\end{theorem}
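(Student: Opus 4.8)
This statement is Theorem~1.1 of Figalli, Maggi and Pratelli's paper \cite{FigMagPra} (in the zero-curvature, purely Euclidean case), so the plan is to follow their argument; I will only sketch its architecture. The aim is, starting from a set $\Omega$ with small deficit $\delta(\Omega)$, to cut away a small-measure ``bad'' part and retain a domain $G$ whose boundary has no long thin filaments, so that the full strength of quantitative isoperimetry (in its $L^1$/barycentric form) applies to $G$. Throughout one normalises $|\Omega|=|\B^{n+1}|$ so that $R_\Omega=1$; the general case follows by scaling.

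\textbf{Step 1: selection of $G$ via the relative isoperimetric / density argument.} The first step is to produce $G\subset\Omega$ satisfying (1), (2) and (3). One considers, for each radius $r$, the slices of $\Omega$ by spheres $S_{x}(r)$ and uses a coarea / Vitali-type covering argument together with the relative isoperimetric inequality to remove from $\Omega$ the regions where $\front(\Omega)$ is ``spread out'' compared with the mass it encloses. Concretely, one shows that there is a choice of finitely many balls whose removal decreases the volume by at most a multiple of $\delta(\Omega)$, does not increase the perimeter (this is where one exploits that replacing a piece of $\partial\Omega$ inside a ball by a portion of the ball's boundary is perimeter-nonincreasing when the enclosed mass is small), and keeps the deficit comparable. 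The bookkeeping that makes $|\Omega\setminus G|\leqslant \frac{\delta(\Omega)}{k(n)}|\Omega|$ and $\delta(G)\leqslant\frac{3}{k(n)}\delta(\Omega)$, with the explicit constant $k(n)=\frac{2-2^{n/(n+1)}}{3}$, comes from comparing $P(G)\leqslant P(\Omega)$ with $|G|\geqslant(1-\frac{\delta(\Omega)}{k(n)})|\Omega|$ and plugging into the definition of $I$; the constant $k(n)$ is precisely the quantitative gap in $t\mapsto t^{n/(n+1)}$ near $t=1$ that converts a volume loss into a deficit bound.

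\textbf{Step 2: from $G$ to the barycentric $L^1$ estimates (4) and (5).} Once $G$ has no thin parts, one invokes the sharp quantitative isoperimetric inequality for $G$ (the Fusco–Maggi–Pratelli/Figalli–Maggi–Pratelli bound $\mathcal A(G)^2\leqslant C(n)\delta(G)$), which gives a ball $B_{x_\Omega}(R_G)$ with $|G\Delta B_{x_\Omega}(R_G)|\leqslant C(n)|G|\delta(G)^{1/2}\leqslant C(n)|G|\delta(\Omega)^{1/2}$, i.e. (5). For (4) one upgrades this symmetric-difference control to an $L^1$ control of the ``radial oscillation'' $\bigl||x-x_\Omega|-R_G\bigr|$ of the reduced boundary: writing the perimeter of $G$ against the outer normal and the position vector field $X-x_\Omega$, one compares $P(G)$ with $(n+1)|G|$ via the divergence theorem and estimates $\int_{\front(G)}\bigl||x-x_\Omega|-R_G\bigr|\,\dhn$ by a combination of $|G\Delta B_{x_\Omega}(R_G)|$ and $\bigl(P(G)-P(B_{x_\Omega}(R_G))\bigr)$, both of which are controlled by $\delta(\Omega)^{1/2}$ up to dimensional constants; this produces the stated constant $\frac{10(n+1)^3}{k(n)}$.

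\textbf{Main obstacle.} The delicate point is Step 1: constructing $G$ so that \emph{simultaneously} the perimeter does not increase, the volume loss is only linear in $\delta(\Omega)$, and the new deficit stays comparable --- these pull in opposite directions, and the argument requires the right covering lemma and a careful choice of which balls to excise (one must avoid a cascade in which removing one ball forces removing many more). Step 2, by contrast, is a fairly mechanical consequence of the already-available sharp quantitative isoperimetric inequality plus the divergence theorem, modulo tracking the explicit powers of $(n+1)$ and the factor $k(n)$. Since the statement is quoted verbatim from \cite{FigMagPra}, in the paper itself one simply cites it; the sketch above indicates how one would reconstruct it if needed.
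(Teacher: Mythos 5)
You are right that this paper gives no proof of this statement: it is quoted verbatim, with explicit constants, from \cite{FigMagPra}, and only the neighbouring Lemma \ref{comparomegaG} is proved here (using a detail of the construction of $G$ in \cite{FigMagPra}). So citing is all that is required, and your Step 1 is a reasonable gloss of the excision procedure, except for one essential omission: in \cite{FigMagPra} the set $G$ is constructed precisely so as to satisfy a uniform trace (boundary Poincar\'e) inequality --- the iterative removal stops exactly when the trace constant falls below a dimensional threshold, and $\Omega$ splits as the disjoint union of $G$ and $F_\infty$ with $\hn(\front(F_\infty))\leqslant(1+k(n))\hn(\front(\Omega)\cap\front(F_\infty))$, which is the property this paper actually exploits in Lemma \ref{comparomegaG}. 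Items (1)--(3) alone do not capture what $G$ is for.

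Your Step 2 reverses the logic of \cite{FigMagPra} and, as a standalone argument, has a genuine gap. In \cite{FigMagPra} the primary estimate is (4): it is obtained by applying the trace inequality of $G$ to the function $x\mapsto |x-x_\Omega|-|T(x)-x_\Omega|$, where $T$ is the Brenier (or Knothe) transport map of $G$ onto the ball and $\int_G|\nabla T-\mathrm{Id}|$ is controlled by $\delta(\Omega)^{1/2}$ through the arithmetic--geometric mean inequality; (5) is then deduced from (4) by a divergence-theorem argument, not the other way round. Passing from the Fraenkel-asymmetry bound (5) (which one may indeed import from the 2008 Fusco--Maggi--Pratelli theorem) to the boundary estimate (4) is not a mechanical divergence-theorem step: testing with the field $(|x-x_\Omega|-R_G)\frac{x-x_\Omega}{|x-x_\Omega|}$ leaves you with the alignment term $\int_{\front(G)}\bigl(1-\langle\tfrac{x-x_\Omega}{|x-x_\Omega|},\nu_G\rangle\bigr)\dhn$ and with integrals of $1/|x-x_\Omega|$ over $B_{x_\Omega}(R_G)\setminus G$, which a set of measure $C\delta^{1/2}$ only controls at order $\delta^{n/(2(n+1))}$, worse than the stated $\delta^{1/2}$; and without the trace property of $G$ the estimate (4) is simply false (thin tentacles of negligible volume and perimeter make the boundary integral arbitrarily large while $\delta$ and the asymmetry stay small), so (1)--(3) cannot suffice as input. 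In short: cite \cite{FigMagPra} as the paper does, but if you sketch its proof, the trace inequality and the transport map are the load-bearing ingredients, and (4) implies (5), not conversely.
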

The following property is important for our purpose and derive easily from \cite{FigMagPra}, but since it is not proved nor stated in \cite{FigMagPra}, we give a proof of it for sake of completeness.
\begin{lemma}\label{comparomegaG} There exists a constant $C(n)>0$ such that under the assumptions and notations of the previous theorem, we have
$$\bigl(1-C(n)\delta(\Omega)\bigr)\hn\bigl(\front(\Omega)\bigr)\leqslant\hn\bigl(\front(\Omega)\cap\front(G)\bigr)$$
\end{lemma}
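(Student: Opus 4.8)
The plan is to exploit Theorem \ref{inegaliteG} to the fullest, in particular the fact that $G\subset\Omega$ is obtained from $\Omega$ by removing a set of small measure and without increasing the perimeter. First I would observe that, since $G\subset\Omega$, at $|D\chi_\Omega|$-almost every point of $\front(\Omega)\cap\front(G)$ the two reduced boundaries have the same outer unit normal, so that $\hn\bigl(\front(\Omega)\cap\front(G)\bigr)$ is exactly the part of $\front(\Omega)$ that ``survives'' in $\front(G)$. The complementary part $\front(\Omega)\setminus\front(G)$ consists of points of $\front(\Omega)$ that lie in the interior of $G$ or on the part of $\front(G)$ with the opposite normal — in any case, the key geometric fact is that this complementary piece is, up to an $\hn$-negligible set, contained in the topological/measure-theoretic boundary of $\Omega\setminus G$. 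Concretely, since $\chi_\Omega=\chi_G+\chi_{\Omega\setminus G}$, we have $D\chi_\Omega=D\chi_G+D\chi_{\Omega\setminus G}$ as measures, hence $|D\chi_\Omega|\leqslant|D\chi_G|+|D\chi_{\Omega\setminus G}|$, and on $\front(\Omega)\setminus\front(G)$ the measure $|D\chi_G|$ gives no mass, so $|D\chi_\Omega|$ restricted to $\front(\Omega)\setminus\front(G)$ is dominated by $|D\chi_{\Omega\setminus G}|$.

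From this the estimate should follow quickly. I would write
\[
\hn\bigl(\front(\Omega)\setminus\front(G)\bigr)=|D\chi_\Omega|\bigl(\front(\Omega)\setminus\front(G)\bigr)\leqslant|D\chi_{\Omega\setminus G}|(\R^{n+1})=P(\Omega\setminus G),
\]
and then I need an upper bound on $P(\Omega\setminus G)$. Here one uses that $\Omega\setminus G=\Omega\setminus G$ has $D\chi_{\Omega\setminus G}=D\chi_\Omega-D\chi_G$, so $P(\Omega\setminus G)\leqslant P(\Omega)+P(G)\leqslant 2P(\Omega)$ by item (2) of Theorem \ref{inegaliteG} — but that crude bound is not enough, since we need the $\delta(\Omega)$ gain. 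Instead I would combine item (1), $|\Omega\setminus G|\leqslant\frac{\delta(\Omega)}{k(n)}|\Omega|$, with the isoperimetric inequality applied to $\Omega\setminus G$ only if it helped, which it does not directly; the right move is rather to note that $P(\Omega)=P(G)+P(\Omega\setminus G)-2\hn\bigl(\front(G)\cap\front(\Omega\setminus G)^{+}\bigr)$ type decomposition, i.e. to track the ``interface'' between $G$ and $\Omega\setminus G$. Writing $\Gamma$ for that interface (the set where $\front(G)$ and $\front(\Omega\setminus G)$ meet with opposite normals), one has $P(\Omega)=\hn\bigl(\front(\Omega)\cap\front(G)\bigr)+\hn\bigl(\front(\Omega)\cap\front(\Omega\setminus G)\bigr)$ and $P(G)=\hn\bigl(\front(\Omega)\cap\front(G)\bigr)+\hn(\Gamma)$, so that $P(G)\leqslant P(\Omega)$ gives $\hn(\Gamma)\leqslant\hn\bigl(\front(\Omega)\cap\front(\Omega\setminus G)\bigr)=\hn\bigl(\front(\Omega)\setminus\front(G)\bigr)=:\epsilon$. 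On the other hand $P(\Omega\setminus G)=\epsilon+\hn(\Gamma)\leqslant 2\epsilon$, and now the isoperimetric inequality for $\Omega\setminus G$ reads $P(\Omega\setminus G)\geqslant I(\B^{n+1})|\Omega\setminus G|^{n/(n+1)}$, which is a lower bound and hence useless, while what I actually want is to bound $\epsilon$ from \emph{above} — so I must get $\epsilon$ controlled by the deficit through the original perimeter.

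The cleanest route, and the one I expect the paper to take: combine the decomposition $P(\Omega)=\hn\bigl(\front(\Omega)\cap\front(G)\bigr)+\epsilon$ with a lower bound on $\hn\bigl(\front(\Omega)\cap\front(G)\bigr)$ coming from $P(G)\geqslant P(\Omega)-\hn(\Gamma)\geqslant P(\Omega)-\epsilon$ is circular, so instead use item (4) or (5) together with $P(G)\leqslant P(\Omega)$ and the near-optimality of $G$: from $\delta(G)\leqslant\frac{3}{k(n)}\delta(\Omega)$ and $P(G)\leqslant P(\Omega)$ and $|G|\geqslant(1-\frac{\delta(\Omega)}{k(n)})|\Omega|$ one gets $P(G)\geqslant(1-C(n)\delta(\Omega))P(\Omega)$ after rewriting $I(G)$ in terms of $I(\Omega)$, $|\Omega|/|G|$ and $P(G)/P(\Omega)$; since $P(G)=P(\Omega)-\epsilon+(\hn(\Gamma)-\epsilon)\cdot(\text{sign})$... more simply, $P(G)\leqslant P(\Omega)$ and $P(G)\geqslant P(\Omega)-\epsilon$ would give $\epsilon$ from below, not above. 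I will therefore use instead: $P(G)\geqslant(1-C(n)\delta(\Omega))P(\Omega)$ (from the deficit/measure bounds above) and $P(G)=\hn\bigl(\front(\Omega)\cap\front(G)\bigr)+\hn(\Gamma)$, together with $\hn\bigl(\front(\Omega)\cap\front(G)\bigr)+\epsilon=P(\Omega)$ and $\hn(\Gamma)\leqslant\epsilon$; subtracting, $(1-C(n)\delta(\Omega))P(\Omega)\leqslant P(G)=P(\Omega)-\epsilon+\hn(\Gamma)\leqslant P(\Omega)$, which still only pins $\hn(\Gamma)-\epsilon\geqslant-C(n)\delta(\Omega)P(\Omega)$; but $P(G)\leqslant P(\Omega)$ forces $\hn(\Gamma)\leqslant\epsilon$, hence $P(G)\geqslant P(\Omega)-\epsilon$ would need $\hn(\Gamma)\geqslant 0$, true — wait, that reverses. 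The genuine obstacle, and where I will focus, is precisely establishing $P(\Omega\setminus G)\leqslant C(n)\delta(\Omega)P(\Omega)$; once that is in hand the lemma is immediate via $\hn\bigl(\front(\Omega)\setminus\front(G)\bigr)\leqslant P(\Omega\setminus G)$. To get it I would apply Theorem \ref{inegaliteG} \emph{again}, or more directly use that the construction of $G$ in \cite{FigMagPra} (a connected component / density argument) yields $\Omega\setminus G$ with perimeter comparable to its own isoperimetric-optimal perimeter, which by item (1) and the isoperimetric inequality is $\leqslant I(\B^{n+1})(1+\delta(\Omega\setminus G))|\Omega\setminus G|^{n/(n+1)}$; bounding $|\Omega\setminus G|^{n/(n+1)}\leqslant(\frac{\delta(\Omega)}{k(n)})^{n/(n+1)}|\Omega|^{n/(n+1)}$ and $|\Omega|^{n/(n+1)}\leqslant P(\Omega)/I(\B^{n+1})$ then gives $P(\Omega\setminus G)\leqslant C(n)\delta(\Omega)^{n/(n+1)}P(\Omega)$, which combined with the possibility of iterating (or with the sharper measure bound in item (5)) upgrades to the linear power $C(n)\delta(\Omega)$ claimed. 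I expect the extraction of this perimeter bound for $\Omega\setminus G$ from the proof in \cite{FigMagPra} — rather than its statement — to be the only real content, the rest being the elementary additivity $D\chi_\Omega=D\chi_G+D\chi_{\Omega\setminus G}$ and the domination of $|D\chi_\Omega|$ on $\front(\Omega)\setminus\front(G)$ by $|D\chi_{\Omega\setminus G}|$.
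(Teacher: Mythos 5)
Your reduction at the start is sound, and in fact you rediscover the perimeter bookkeeping the paper uses: writing $F_\infty:=\Omega\setminus G$, $\epsilon:=\hn\bigl(\front(\Omega)\cap\front(F_\infty)\bigr)=\hn\bigl(\front(\Omega)\setminus\front(G)\bigr)$ and $\Gamma:=\front(G)\cap\front(F_\infty)$, one has up to $\hn$-null sets $P(\Omega)=\hn\bigl(\front(\Omega)\cap\front(G)\bigr)+\epsilon$, $P(G)=\hn\bigl(\front(\Omega)\cap\front(G)\bigr)+\hn(\Gamma)$ and $P(F_\infty)=\epsilon+\hn(\Gamma)$, and the isoperimetric inequality applied to $G$ together with item (1) of Theorem \ref{inegaliteG} and $I(\Omega)=(1+\delta(\Omega))I(\B^{n+1})$ gives $P(G)\geqslant(1-C(n)\delta(\Omega))P(\Omega)$, hence $\epsilon-\hn(\Gamma)=P(\Omega)-P(G)\leqslant C(n)\delta(\Omega)P(\Omega)$. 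But there your argument stalls, and you notice it: from $P(G)\leqslant P(\Omega)$ you get only $\hn(\Gamma)\leqslant\epsilon$, which is precisely $\epsilon-\hn(\Gamma)\geqslant 0$ and conveys no information about $\epsilon$ itself (if $\hn(\Gamma)=\epsilon$, i.e.\ $P(G)=P(\Omega)$, then $\epsilon$ is unconstrained). Your fallback — treat $P(\Omega\setminus G)$ as near-isoperimetric for its own volume and invoke item (1) — is an unwarranted hypothesis about $\delta(\Omega\setminus G)$, and even granting it would only yield $\epsilon\lesssim\delta(\Omega)^{n/(n+1)}P(\Omega)$, the wrong power; the appeals to ``iterating'' or to item (5) do not repair this.

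The missing ingredient is a \emph{strict} gap $\hn(\Gamma)\leqslant c\,\epsilon$ with $c<1$, so that $\epsilon-\hn(\Gamma)\geqslant(1-c)\epsilon$ and the bound on $P(\Omega)-P(G)$ closes the argument. The paper obtains precisely this from the way $G$ is built in \cite{FigMagPra}: the construction terminates with $\hn\bigl(\front(F_\infty)\bigr)\leqslant(1+k(n))\hn\bigl(\front(\Omega)\cap\front(F_\infty)\bigr)$, which in your notation is $\hn(\Gamma)\leqslant k(n)\epsilon$ with $k(n)=\tfrac{2-2^{n/(n+1)}}{3}<1$. This is \emph{not} an isoperimetric estimate on $\Omega\setminus G$ and cannot be derived from the items listed in Theorem \ref{inegaliteG}; it is a feature of the Figalli--Maggi--Pratelli iterative procedure (their stopping criterion), which is exactly why the paper says the lemma ``derives easily from \cite{FigMagPra} but is not proved nor stated'' there. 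With it in hand, your own decomposition gives $\epsilon\leqslant\frac{1}{1-k(n)}\bigl(P(\Omega)-P(G)\bigr)\leqslant\frac{C(n)}{1-k(n)}\delta(\Omega)P(\Omega)$, which is the lemma.
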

\begin{proof} We reuse the notations of \cite{FigMagPra}. First of all, by the previous theorem, we have
\begin{align}
\hn\bigl(\front(G)\bigr)&\geqslant I(\B^{n+1})|G|^\frac{n}{n+1}\geqslant (1-\frac{\delta(\Omega)}{k(n)})^\frac{n}{n+1} I(\B^{n+1})|\Omega|^\frac{n}{n+1}\geqslant\frac{(1-\frac{\delta(\Omega)}{k(n)})^\frac{n}{n+1}}{1+\delta(\Omega)}\hn\bigl(\front(\Omega)\bigr)\nonumber\\
&\geqslant\bigl(1-C(n)\delta(\Omega)\bigr)\hn\bigl(\front(\Omega)\bigr)\label{ineq1}
\end{align}
and by the construction made in \cite{FigMagPra}, $\Omega$ is the disjoint union of $G$ and a set $F_\infty$ which satisfy 
$$\hn\bigl(\front(F_\infty)\bigr)\leqslant\bigl(1+ k(n)\bigr)\hn\bigl(\front(\Omega)\cap\front(F_\infty)\bigr).$$
Then we have
$$\hn(\front(\Omega))=\hn(\front(\Omega)\cap\front(G))
+\hn(\front(\Omega)\cap\front(F_{\infty}))$$
and
\begin{align*}
(1+k(n))\hn\bigl(\front(\Omega)\bigr)&+(1-k(n))\hn\bigl(\front(G)\cap\front(\Omega)\bigr)\\
&=
2\hn\bigl(\front(G)\cap\front(\Omega)\bigr)+(1+k(n))\hn\bigl(\front(\Omega)\cap\front(F_\infty)\bigr)\\
&\geqslant2\hn\bigl(\front(G)\cap\front(\Omega)\bigr)+\hn\bigl(\front(F_\infty)\bigr)\\
&= \hn\bigl(\front(G)\bigr)+\hn\bigl(\front(\Omega)\bigr)\\
&\geqslant \bigl(2-C(n)\delta(\Omega)\bigr)\hn\bigl(\front(\Omega)\bigr)
\end{align*}
where we have used Inequality \eqref{ineq1}. We infer that
$$\hn\bigl(\front(G)\cap\front(\Omega)\bigr)\geqslant \bigl(1-C(n)\delta(\Omega)\bigr)\hn\bigl(\front(\Omega)\bigr).$$
\end{proof}
\subsection{Proof of remark \ref{DomainHausdorff}}
Up to a translation we can assume that $x_\Omega=0$ and from the Theorem \ref{inegaliteG} we have :
$$|G\Delta B_0(R_G)|\leqslant C(n)|G|\delta(\Omega)^{1/2}$$
Since
$$\Omega\Delta B_0(R_{\Omega})\subset(\Omega\Delta G)\cup(G\Delta B_0(R_G))\cup(B_0(R_G)\Delta B_0(R_{\Omega}))$$
we deduce immediately that $|\Omega\Delta B_0(R_{\Omega})|\leqslant C(n)|\Omega|\delta(\Omega)^{1/2}$ which proves the point (1) of the remark.

On the other hand let $x\in B_0(R_{\Omega})$ and $R_\Omega\geqslant\varepsilon>0$ such that 
$$B_x(\varepsilon)\cap(\Omega\cap B_0(R_{\Omega}))=\emptyset.$$
We then have $B_x(\varepsilon)\cap B_0(R_{\Omega})\subset\Omega\Delta B_0(R_{\Omega})$ and since 
$B_x(\varepsilon)\cap B_0(R_{\Omega})$ contains the ball with diameter $\R x\cap B_x(\varepsilon)\cap B_0(R_{\Omega})$ whose length is larger than $\varepsilon$, we get
$$\frac{1}{C(n)}\varepsilon^{n+1}\leqslant|\Omega\Delta B_0(R_{\Omega})|\leqslant C(n)|\Omega|\delta(\Omega)^{1/2}$$
Since $\Omega\cap B_0(R_\Omega)\subset B_0(R_\Omega)$, it suffices to get the point (2) that is 
$$d_H(\Omega\cap B_0(R_{\Omega}),B_0(R_{\Omega}))\leqslant C(n) |\Omega|^{\frac{1}{n+1}}\delta(\Omega)^{\frac{1}{2(n+1)}}$$
\section{Concentration in a tubular neighborhood of a sphere}
The main result of this section is the following theorem :
\begin{lemma}\label{concentration} Let $\Omega$ be a set of $\R^{n+1}$ with finite perimeter and let 
$$A_{\eta}:=\Bigl\{x\in\R^{n+1}/\, \bigl||x-x_\Omega|-R_{\Omega}\bigr|\leqslant R_{\Omega}\eta\Bigr\}.$$
 If $\delta(\Omega)\leqslant \frac{1}{C(n)}$ then for any $\alpha\in(0,\frac{1}{2})$, we have 
$$\hn\bigl(\front(\Omega)\setminus A_{\delta(\Omega)^{\alpha}}\bigr)\leqslant C(n)P(\Omega)\delta(\Omega)^{\frac{1}{2}-\alpha}$$
\end{lemma}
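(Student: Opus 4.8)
The plan is to derive the estimate from the Figalli--Maggi--Pratelli Theorem \ref{inegaliteG} together with the comparison Lemma \ref{comparomegaG}, by a Markov-type argument. First I would apply Theorem \ref{inegaliteG} (legitimate since $\delta(\Omega)\leqslant\frac1{C(n)}$ forces $\delta(\Omega)\leqslant\min(1,k(n)^2/8)$), obtaining the auxiliary domain $G\subset\Omega$, the point $x_\Omega$, and in particular the $L^1$-concentration bound
$$\int_{\front(G)}\bigl||x-x_\Omega|-R_G\bigr|\,\dhn\leqslant C(n)|G|\delta(\Omega)^{1/2}.$$
Since $|G|/|\Omega|\in\bigl[1-\frac{\delta(\Omega)}{k(n)},1\bigr]$ and $R_G^{n+1}|\B^{n+1}|=|G|$, $R_\Omega^{n+1}|\B^{n+1}|=|\Omega|$, this comparison of volumes gives $|R_G-R_\Omega|\leqslant C(n)R_\Omega\delta(\Omega)$, so the tubular set $A_\eta$ built with $R_\Omega$ is, up to a controlled error, a tubular set around $R_G$.

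Next I would fix $\alpha\in(0,\frac12)$ and set $\eta=\delta(\Omega)^\alpha$. For $x\in\front(G)\setminus A_\eta$ we have $\bigl||x-x_\Omega|-R_\Omega\bigr|>R_\Omega\eta$, hence, using $|R_G-R_\Omega|\leqslant C(n)R_\Omega\delta(\Omega)$ and that $\delta(\Omega)^{1-\alpha}$ is small (here $1-\alpha>\frac12$),
$$\bigl||x-x_\Omega|-R_G\bigr|\geqslant R_\Omega\eta-C(n)R_\Omega\delta(\Omega)\geqslant\tfrac12 R_\Omega\eta.$$
Integrating this lower bound over $\front(G)\setminus A_\eta$ and comparing with the $L^1$ bound above yields
$$\hn\bigl(\front(G)\setminus A_\eta\bigr)\leqslant\frac{C(n)|G|\delta(\Omega)^{1/2}}{R_\Omega\eta}=\frac{C(n)|G|}{R_\Omega}\,\delta(\Omega)^{\frac12-\alpha}.$$
Then I would bound $|G|/R_\Omega\leqslant|\Omega|/R_\Omega=|\B^{n+1}|R_\Omega^{\,n}$, and since $P(\Omega)\geqslant I(\B^{n+1})|\Omega|^{\frac{n}{n+1}}=C(n)R_\Omega^{\,n}$, conclude $\hn\bigl(\front(G)\setminus A_\eta\bigr)\leqslant C(n)P(\Omega)\delta(\Omega)^{\frac12-\alpha}$.

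Finally I would transfer the estimate from $\front(G)$ to $\front(\Omega)$. By Lemma \ref{comparomegaG},
$$\hn\bigl(\front(\Omega)\setminus\front(G)\bigr)=\hn\bigl(\front(\Omega)\bigr)-\hn\bigl(\front(\Omega)\cap\front(G)\bigr)\leqslant C(n)\delta(\Omega)P(\Omega),$$
and since $\front(\Omega)\setminus A_\eta\subset\bigl(\front(\Omega)\setminus\front(G)\bigr)\cup\bigl(\front(G)\setminus A_\eta\bigr)$, combining the two displays and using $\delta(\Omega)\leqslant\delta(\Omega)^{\frac12-\alpha}$ (valid because $\delta(\Omega)\leqslant1$ and $\frac12-\alpha<1$) gives the claimed bound.

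The argument carries no genuine obstacle: the two points requiring a little care are the passage between the deflated radius $R_G$ and $R_\Omega$ — so that the same set $A_\eta$ can be used throughout — and the final transfer from $G$ to $\Omega$; both are absorbed into $C(n)$, and the Markov step works precisely because $\alpha<\frac12$ (which keeps $\delta(\Omega)^{\frac12-\alpha}$ a genuine positive power and $\delta(\Omega)^{1-\alpha}$ small).
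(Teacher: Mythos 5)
Your argument is correct and follows essentially the same route as the paper's: both derive the estimate from the $L^{1}$ bound in Theorem~\ref{inegaliteG}(4), pass from $\front(G)$ to $\front(\Omega)$ via Lemma~\ref{comparomegaG}, and conclude with the choice $\eta=\delta(\Omega)^{\alpha}$. The only cosmetic difference is that you run the Markov step through a pointwise lower bound $\bigl||x-x_\Omega|-R_G\bigr|\geqslant\tfrac12 R_\Omega\eta$ on $\front(G)\setminus A_\eta$ (using $\delta(\Omega)$ small), whereas the paper keeps $R_\Omega$ in the integrand and absorbs the $|R_\Omega-R_G|$ discrepancy as an additive error term; both are legitimate and lead to the same bound.
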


\begin{proof} By inequalities (4) and (1) of Theorem \ref{inegaliteG}, we get
\begin{eqnarray*}
\hn\bigl(\front(G)\setminus A_{\eta}\bigr)&\leqslant&\frac{1}{R_\Omega \eta}\int_{\front(G)\setminus A_{\eta}}\bigl||x-x_\Omega|-R_\Omega\bigr|\dhn\\
&\leqslant&\frac{1}{R_\Omega \eta}\int_{\front(G)\setminus A_{\eta}}\bigl||x-x_\Omega|-R_G\bigr|\dhn+\frac{|R_\Omega-R_G|}{\eta R_\Omega}\hn\bigl(\front(G)\bigr)\\
&\leqslant& \frac{1}{R_\Omega\eta}\frac{10(n+1)^3}{k(n)}|G|\delta(\Omega)^{1/2}+\frac{|R_\Omega-R_G|}{\eta R_\Omega}\hn\bigl(\front(G)\bigr)\\
&\leqslant& C(n)\frac{|\Omega|^\frac{n}{n+1}}{\eta}\sqrt{\delta(\Omega)}
\end{eqnarray*}
where we have used that $\hn\bigl(\front(G)\bigr)=P(G)=C(n)\bigl(1+\delta(G)\bigr) |G|^\frac{n}{n+1}\leqslant C(n)|\Omega|^\frac{n}{n+1}$ (by Theorem \ref{inegaliteG} (1) and (3)).

Now by Lemma \ref{comparomegaG} and Inequality (2) of Theorem \ref{inegaliteG}, we have
$$\hn\bigl(\front(G)\setminus \bigl(\front(\Omega)\cap\front(G)\bigr)\bigr)\leqslant\hn\bigl(\front(\Omega)\setminus \bigl(\front(\Omega)\cap\front(G)\bigr)\bigr)\leqslant C(n)\delta(\Omega)\hn\bigl(\front(\Omega)\bigr)$$
And so
\begin{align*}\hn\bigl(\front(\Omega)\setminus A_{\eta}\bigr)&\leqslant\hn\bigl(\bigl(\front(\Omega)\cap\front(G)\bigr)\setminus A_{\eta}\bigr)+\hn\bigl(\front(\Omega)\setminus \bigl(\front(\Omega)\cap\front(G)\bigr)\bigr)\\
&\leqslant\hn\bigl(\front(G)\setminus A_{\eta}\bigr)+C(n)\hn\bigl(\front(\Omega)\bigr)\delta(\Omega)\\
&\leqslant\frac{C(n)}{\eta}|\Omega|^\frac{n}{n+1}\delta(\Omega)^{1/2}+C(n)|\Omega|^{\frac{n}{n+1}}\delta(\Omega)\\
&\leqslant C(n)\left(\frac{1}{\eta}\delta(\Omega)^{1/2}+\delta(\Omega)\right)|\Omega|^{\frac{n}{n+1}}
\end{align*}

Then choosing $\eta:=\delta(\Omega)^{\alpha}$ and $\delta(\Omega)\leqslant 1$ we get the desired result.
\end{proof}
\section{Domains with small deficit without assumption on the boundary}
In this section, we gather the proofs of several geometric-measure properties of the boundary of almost isoperimetric domains.

\subsection{Proof of Theorem \ref{Hausdorff}}
 By Lemma \ref{concentration}, we have Inequality (1) with $A(\Omega)=\front(\Omega)\cap A_{\delta(\Omega)^{1/4}}$.
Inequality (2) will be a consequence of the following density theorem.
\begin{theorem}\label{equidense}  Let $\Omega$ be a set of $\R^{n+1}$ with finite perimeter and  $\rho\in\bigl[C(n)\delta(\Omega)^\frac{1}{8}R_\Omega,R_\Omega\bigr]$. Then for any $x\in S_{x_\Omega}(R_{\Omega})$ we have
$$\Bigl|\frac{\hn\bigl(B_x(\rho)\cap S_{x_\Omega}(R_\Omega)\bigr)}{R_\Omega^n\Vol\S^n}-\frac{\hn\bigl(\front(\Omega)\cap B_x(\rho)\bigr)}{\hn\bigl(\front(\Omega)\bigr)}\Bigr|\leqslant C(n)\delta(\Omega)^\frac{1}{4}$$
\end{theorem}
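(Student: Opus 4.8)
The plan is to compare the normalised volume distribution of $\front(\Omega)$ on Euclidean balls $B_x(\rho)$ centred on the model sphere $S_{x_\Omega}(R_\Omega)$ with the corresponding proportion for the sphere itself, and to do so by exploiting the quantitative flatness of $\front(\Omega)$ near $S_{x_\Omega}(R_\Omega)$ provided by Lemma \ref{concentration} together with the $L^1$ proximity of the position function to $R_\Omega$ coming from Theorem \ref{inegaliteG}(4). Up to translation assume $x_\Omega=0$ and rescale so that $R_\Omega=1$. The elementary geometric fact I would isolate first is that for a sphere $S_0(1)$, the map $\rho\mapsto \hn\bigl(B_x(\rho)\cap S_0(1)\bigr)/\hn(S_0(1))$ is Lipschitz in $\rho$ uniformly in $x\in S_0(1)$ (with a dimensional constant), and more generally that for a point $y$ with $\bigl||y|-1\bigr|\leqslant\eta$, the symmetric difference $B_x(\rho)\,\Delta\, B_y(\rho)$ meets $S_0(1)$ in a set of $\hn$-measure $\leqslant C(n)\eta$; this lets one absorb the radial error between $\front(\Omega)$ and $S_0(1)$.

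The key steps, in order, are as follows. First, write $\hn(\front(\Omega)\cap B_x(\rho)) = \hn(\front(\Omega)\cap B_x(\rho)\cap A_\eta) + \hn(\front(\Omega)\cap B_x(\rho)\setminus A_\eta)$ with $\eta=\delta(\Omega)^{1/4}$; by Lemma \ref{concentration} the second term is $\leqslant C(n)P(\Omega)\delta(\Omega)^{1/4}$, so up to an admissible error we may restrict attention to the part of $\front(\Omega)$ lying in the thin shell $A_\eta$. Second, radially project this part onto $S_0(1)$: the nearest-point (radial) retraction $\pi:A_\eta\to S_0(1)$ distorts distances by a factor $1+O(\eta)$ and, crucially, the area formula / coarea together with the first-variation (divergence) formula for $\front(\Omega)$ show that $\hn$ is nearly preserved — more precisely one controls $\bigl|\hn(E)-\hn(\pi(E))\bigr|$ for $E\subset\front(\Omega)\cap A_\eta$ using that $\front(\Omega)$ is $\hn$-rectifiable with unit normal, and that the bulk of its mass sits in $A_\eta$ at near-unit distance; alternatively, and more robustly, use Theorem \ref{inegaliteG}(4) directly to bound $\int_{\front(\Omega)}\bigl||x|-1\bigr|\,\dhn\leqslant C(n)P(\Omega)\delta(\Omega)^{1/2}$ and feed this into a Fubini/coarea slicing in the radial variable. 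Third, having pushed everything onto $S_0(1)$, compare $\hn(\pi(\front(\Omega)\cap B_x(\rho)\cap A_\eta))$ with $\hn(S_0(1)\cap B_x(\rho))$: the inclusion $\pi(B_x(\rho)\cap A_\eta)\subset B_x(\rho+C\eta)$ and the reverse-type inclusion give, via the Lipschitz dependence of the spherical cap area on $\rho$ isolated in the first step, that these differ by $C(n)\delta(\Omega)^{1/4}$, provided $\rho\geqslant C(n)\delta(\Omega)^{1/8}$ — this lower bound on $\rho$ is exactly what guarantees the relative cap-area is not swamped by the $O(\eta)=O(\delta(\Omega)^{1/4})$ thickening (one needs $\eta/\rho$ small, i.e. $\rho\gg\delta(\Omega)^{1/4}$, and the stated range is comfortably inside this). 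Fourth, divide by $\hn(\front(\Omega))=P(\Omega)$ and by $\hn(S_0(1))=\Vol\,\S^n$ and collect the errors.

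I expect the main obstacle to be Step two and Step three done honestly: controlling the push-forward of $\hn\llcorner\front(\Omega)$ under the radial retraction. The naive difficulty is that $\front(\Omega)$, unlike a graph over $S_0(1)$, could in principle fold so that $\pi$ is far from injective on it; the way around this is not to claim injectivity but to use only the one-sided estimate needed — namely that $\pi$ is $1$-Lipschitz up to $O(\eta)$ so $\hn(\pi(E))\leqslant (1+C\eta)^n\hn(E)$ for the upper bound, and for the lower bound to use the isoperimetric-type mass lower bound $P(\Omega)\geqslant I(\B^{n+1})|\Omega|^{n/(n+1)}$ together with the fact (from Theorem \ref{inegaliteG}(5) and Lemma \ref{comparomegaG}) that $\front(\Omega)$ has nearly the mass of $S_0(1)$ and is nearly radially graphical in an $L^1$ sense, so that excess folding would create excess perimeter contradicting $\delta(\Omega)$ small. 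Getting the bookkeeping of these two inequalities to close with the clean exponent $\delta(\Omega)^{1/4}$ and the clean threshold $\delta(\Omega)^{1/8}$ on $\rho$ — rather than some worse power — is the delicate point, and I anticipate it is handled by carefully tracking the $\delta(\Omega)^{1/2}$ from Theorem \ref{inegaliteG}(4)--(5) through the slicing, where one $\delta(\Omega)^{1/4}$ is lost to the shell thickness and the other half-power is what remains.
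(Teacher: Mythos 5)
Your overall architecture (reduce to the thin shell $A_{\delta^{1/4}}$ via Lemma \ref{concentration}, then compare with a spherical cap, with the threshold $\rho\gtrsim\delta^{1/8}R_\Omega$ arising from the requirement that the shell thickness $\eta=\delta^{1/4}$ be $o(\rho/R_\Omega)$) is the right shape, and you have correctly identified the exponent bookkeeping. But the heart of your argument — pushing $\hn\llcorner\front(\Omega)$ onto $S_0(1)$ by radial retraction and comparing cap measures — has a genuine gap precisely at the point you flag. The Lipschitz bound for $\pi$ gives $\hn(\pi(E))\leqslant(1+C\eta)^n\hn(E)$, hence only a \emph{lower} bound on $\hn(E)$ in terms of $\hn(\pi(E))$. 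For the other half of the two-sided estimate (ruling out excess mass in $B_x(\rho)$) you would need some control on the multiplicity of $\pi$ restricted to $\front(\Omega)\cap A_\eta$. Your proposed resolution — that folding would create excess perimeter contradicting small $\delta(\Omega)$ — is not localized: a local fold near $x$ adding mass to $B_x(\rho)$ can be compensated by a deficit elsewhere while keeping the total perimeter within the $\delta$-window, and ruling this out for \emph{every} $x$ and $\rho$ simultaneously is essentially the content of the theorem itself, so the argument as stated is circular. Moreover, even the lower bound you describe requires knowing that the projected cap is actually \emph{covered} by $\pi(\front(\Omega)\cap A_\eta)$, which does not follow from the $L^1$ bound $\int_{\front(G)}\bigl||x|-R_G\bigr|\dhn\leqslant C\delta^{1/2}|G|$ alone.

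The paper avoids this entirely by never projecting sets. It first proves Proposition \ref{frontOmegaSnROmega}, a weak comparison of the \emph{measures} $\hn\llcorner\front(\Omega)/P(\Omega)$ and $\hn\llcorner S_{x_\Omega}(R_\Omega)/(R_\Omega^n\Vol\S^n)$ against $C^1$ test functions with error $C(n)(\|f\|_\infty+\|df\|_\infty)\delta^{1/2}$, and this is proved by the divergence theorem applied to $\div(fX)$ on $G$ versus $B_0(R_G)$: the identity produces both directions of the inequality at once, with only $|G\Delta B_0(R_G)|$ (controlled by Theorem \ref{inegaliteG}(5)) appearing as error, so multiplicity and sign cancellation are handled automatically. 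In the proof of Theorem \ref{equidense} this proposition is then specialized to $f(x)=\varphi(|x|)v(R_\Omega x/|x|)$ with a radial cutoff $\varphi$ equal to $1$ on the shell — this is a smoothed, dual version of your radial projection — and approximate characteristic functions $v$ of geodesic caps; the transition between Euclidean balls $B_x(\rho)$ and the angular cones $\mathcal{C}_\alpha$ is handled by elementary cone inclusions and the Bishop/Bishop--Gromov comparison, which is where the $\rho\geqslant C(n)\delta^{1/8}R_\Omega$ threshold enters. If you want to salvage your set-theoretic route you would essentially need to reproduce this divergence-theorem insight in some form; without it the folding issue is a real obstruction, not a technicality.
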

Let $x\in S_{x_\Omega}(R_{\Omega})$ and $\rho=C_1(n)R_\Omega\delta(\Omega)^{\beta(n)}$ with $\beta(n):=\min(\frac{1}{8},\frac{1}{4n})$. Then for $C_1(n)$ large enough and $\delta(\Omega)\leqslant(1/C_1(n))^{1/\beta(n)}$, $\rho\in\bigl[C(n)\delta(\Omega)^\frac{1}{8}R_\Omega,R_\Omega\bigr]$ and the estimate of Theorem \ref{equidense}  combined to the fact that there exists a constant $C_2(n)$ such that
$$\frac{\hn\bigl(B_x(\rho)\cap S_{x_\Omega}(R_\Omega)\bigr)}{R_\Omega^n\Vol\S^n}\geqslant C_2(n)\left(\frac{\rho}{R_\Omega}\right)^n$$
gives for $C_1(n)$ great enough
\begin{align*}\frac{\hn\bigl(\front(\Omega)\cap B_x(\rho)\bigr)}{\hn\bigl(\front(\Omega)\bigr)}&\geqslant-C(n)\delta(\Omega)^{1/4}+C_2(n)\left(\frac{\rho}{R_\Omega}\right)^n\\
&\geqslant(-C(n)+C_2(n)C_1(n)^n)\delta(\Omega)^{\min(\frac{n}{8},\frac{1}{4})}\\
&\geqslant C_3(n)\delta(\Omega)^{\min(\frac{n}{8},\frac{1}{4})}
\end{align*}
Moreover from the lemma \ref{concentration} we have 
\begin{align*}\hn(\front(\Omega)\cap A_{\delta(\Omega)^{1/4}}\cap B_x(\rho))&\geqslant\hn\bigl(\front(\Omega)\cap B_x(\rho)\bigr)-C(n)P(\Omega)\delta(\Omega)^{1/4}\\
&\geqslant C_3(n)P(\Omega)\delta(\Omega)^{\min(\frac{n}{8},\frac{1}{4})}-C(n)P(\Omega)\delta(\Omega)^{1/4}\\
&\geqslant (C_3(n)-C(n))P(\Omega)\delta(\Omega)^{\min(\frac{n}{8},\frac{1}{4})}\\
&\geqslant C_4(n)P(\Omega)\delta(\Omega)^{\min(\frac{n}{8},\frac{1}{4})}
\end{align*}
If $C_1(n)$ is large enough. This implies that $\front(\Omega)\cap A_{\delta(\Omega)^{1/4}}\cap B_x(\rho)$ has non-zero measure, hence is non-empty for any $x\in S_{x_\Omega}(R_\Omega)$. Putting $A(\Omega)=\front(\Omega)\cap A_{\delta(\Omega)^{1/4}}$, we obtain that $d_H(A(\Omega),S_{x_\Omega}(R_\Omega))\leqslant\rho$ for $C_1(n)$ large enough which gives the fact (2) of Theorem \ref{Hausdorff}. \hfill$\qedbox$
\medskip

Note that Theorem \ref{equidense} implies that density of $\front(\Omega)$ near each point of $\S^n(R_{\Omega})$ converges to 1 at any fixed scale. It will be combined with Allard's regularity theorem in Section \ref{Allard} to prove Theorem \ref{Lipschitz}.

\subsection{Proof of Theorem \ref{equidense}}

 It will be a consequence of the following fundamental proposition.
\begin{proposition}\label{frontOmegaSnROmega} Let $\Omega$ be a set of $\R^{n+1}$ of finite perimeter, with $\delta(\Omega)\leqslant\frac{1}{C(n)}$. For any $f\in C^1_c(\R^{n+1})$, we have
$$\Bigl|\frac{1}{P(\Omega)}\int_{\front(\Omega)}f\dhn
-\frac{1}{R_\Omega^n\Vol\S^n}\int_{S_{x_\Omega}(R_{\Omega})}f\dhn\Bigr|\leqslant C(n)\bigl(\|f\|_\infty+\|df\|_\infty\bigr)\delta(\Omega)^\frac{1}{2},$$
where we denote $\|df\|_\infty=\displaystyle\sup_y|d_{y} f(y)|$.
\end{proposition}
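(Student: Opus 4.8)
The plan is to turn the surface integral over $\front(\Omega)$ into a volume integral over $\Omega$ via the Gauss--Green formula, and then to compare that volume integral with the corresponding one over the ball $B:=B_{x_\Omega}(R_\Omega)$, using that $\Omega$ is volume-close to $B$. After translating so that $x_\Omega=0$ and writing $R:=R_\Omega$ and $S:=S_{x_\Omega}(R_\Omega)$, the two inputs are: the isoperimetric inequality in sharp form, $P(\Omega)=(1+\delta(\Omega))P(B)$ with $P(B)=R^n\Vol\S^n$ and $|\Omega|=\frac{R}{n+1}P(B)\le R\,P(\Omega)$; and the volume estimate $|\Omega\Delta B|\le C(n)|\Omega|\,\delta(\Omega)^{1/2}$, which is Remark \ref{DomainHausdorff}(1) and follows from Theorem \ref{inegaliteG} alone. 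I will also fix a \emph{compactly supported} Lipschitz vector field $V$ on $\R^{n+1}$ with $|V|\le 1$ everywhere, with $V(x)=x/R$ for $|x|\le R$ (so that $V$ agrees with the outward unit normal of $S$ on $S$, hence $\int_B\div V\,dx=\hn(S)=P(B)$ and $\int_B\div(fV)\,dx=\int_S f\dhn$), and with $|\div V|\le C(n)/R$ a.e.; the truncated radial field $V(x)=x/R$ on $\{|x|\le R\}$, $V(x)=Rx/|x|^2$ on $\{R\le|x|\le 2R\}$, cut off smoothly to $0$ on $\{2R\le|x|\le 3R\}$, has all these properties.

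The core of the argument is the splitting, with $\nu_\Omega$ the measure-theoretic outward unit normal of $\front(\Omega)$,
$$\int_{\front(\Omega)}f\dhn=\int_{\front(\Omega)}f\,(V\cdot\nu_\Omega)\dhn+\int_{\front(\Omega)}f\,(1-V\cdot\nu_\Omega)\dhn.$$
For the first (main) term, the Gauss--Green formula for sets of finite perimeter, applied to the compactly supported Lipschitz field $fV$ on $\Omega$ and on $B$, gives $\int_{\front(\Omega)}f(V\cdot\nu_\Omega)\dhn=\int_\Omega\div(fV)\,dx$ and $\int_S f\dhn=\int_B\div(fV)\,dx$; subtracting and using $|\div(fV)|\le\|df\|_\infty|V|+\|f\|_\infty|\div V|\le\|df\|_\infty+C(n)\|f\|_\infty/R$ together with the volume estimate and $|\Omega|\le R\,P(\Omega)$, I obtain
$$\Bigl|\int_{\front(\Omega)}f(V\cdot\nu_\Omega)\dhn-\int_S f\dhn\Bigr|\le\|\div(fV)\|_\infty\,|\Omega\Delta B|\le C(n)\bigl(R\|df\|_\infty+\|f\|_\infty\bigr)P(\Omega)\,\delta(\Omega)^{1/2}.$$
For the second term, the pointwise inequality $V\cdot\nu_\Omega\le|V||\nu_\Omega|\le1$ shows $1-V\cdot\nu_\Omega\ge0$, so its absolute value is at most $\|f\|_\infty\int_{\front(\Omega)}(1-V\cdot\nu_\Omega)\dhn=\|f\|_\infty\bigl(P(\Omega)-\int_\Omega\div V\,dx\bigr)$, again by Gauss--Green; writing $\int_\Omega\div V\,dx=\int_B\div V\,dx+\bigl(\int_{\Omega\setminus B}\div V\,dx-\int_{B\setminus\Omega}\div V\,dx\bigr)=P(B)+O(\|\div V\|_\infty|\Omega\Delta B|)$ and using $P(\Omega)-P(B)=\delta(\Omega)P(B)\le\delta(\Omega)P(\Omega)$, I get $0\le\int_{\front(\Omega)}(1-V\cdot\nu_\Omega)\dhn\le C(n)P(\Omega)\delta(\Omega)^{1/2}$, hence the second term is bounded by $C(n)\|f\|_\infty P(\Omega)\delta(\Omega)^{1/2}$.

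Adding the two bounds and dividing by $P(\Omega)$ gives the estimate with $\frac1{P(\Omega)}\int_S f\dhn$ in place of $\frac1{R^n\Vol\S^n}\int_S f\dhn$; the difference of these two quantities equals $\bigl(\frac1{P(\Omega)}-\frac1{P(B)}\bigr)\int_S f\dhn=-\frac{\delta(\Omega)}{P(\Omega)}\int_S f\dhn$, of absolute value at most $\|f\|_\infty\delta(\Omega)$, and since $\delta(\Omega)\le1$ every occurrence of $\delta(\Omega)$ can be replaced by $\delta(\Omega)^{1/2}$. This proves the inequality with $R_\Omega\|df\|_\infty$ in place of $\|df\|_\infty$; the statement being scale-invariant, the normalization $|\Omega|=|\B^{n+1}|$ (i.e.\ $R_\Omega=1$) then yields the stated form. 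The only step requiring care is the construction of $V$: it must simultaneously have $|V|\le1$ globally (needed for the sign $1-V\cdot\nu_\Omega\ge0$ in the second term), restrict to the outward normal on $S$ with $\int_B\div V\,dx=\hn(S)$, and have bounded divergence and compact support despite the singularity of the pure radial field at the origin and its slow decay at infinity; the truncated field above handles all of this. Conceptually the key point is simply that the ``defect integral'' $\int_{\front(\Omega)}(1-V\cdot\nu_\Omega)\dhn=P(\Omega)-\int_\Omega\div V\,dx$ is of order $\delta(\Omega)^{1/2}$, which is where the quantitative isoperimetric information enters. (Alternatively one can run the same computation on the Figalli--Maggi--Pratelli set $G\subset\Omega$ of Theorem \ref{inegaliteG}, using its estimate (5) in place of Remark \ref{DomainHausdorff}(1), and transfer back to $\Omega$ via Lemma \ref{comparomegaG}, picking up only extra terms of order $\delta(\Omega)$.)
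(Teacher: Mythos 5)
Your proof is correct in its overall structure, and it follows a genuinely different and in some ways cleaner route than the paper's. The paper passes to the Figalli--Maggi--Pratelli set $G\subset\Omega$ from Theorem \ref{inegaliteG}, works with the raw position field $X(x)=x$, splits
$\int_{\front(G)}f\,\dhn$ into $\frac1{R_G}\int_{\front(G)}f\langle X,\nu_G\rangle\dhn$ plus the error $\frac1{R_G}\int_{\front(G)}f\,(R_G-\langle X,\nu_G\rangle)\dhn$, estimates the first piece via the divergence theorem and $|G\Delta B_0(R_G)|$, estimates the second piece using the first-moment bound $\int_{\front(G)}\bigl||x|-R_G\bigr|\dhn\leqslant C|G|\delta^{1/2}$ together with the algebraic identity $\delta(G)=\frac{R_G\hn(\front(G))}{(n+1)|G|}-1$, and then transfers back to $\Omega$ via Lemma \ref{comparomegaG} and the $R_G$-versus-$R_\Omega$ comparison. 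You avoid the detour through $G$ entirely: by replacing $X$ with a \emph{truncated, normalised} radial field $V$ with $|V|\leqslant1$ (so that $1-V\cdot\nu_\Omega\geqslant0$), the defect term collapses to $P(\Omega)-\int_\Omega\div V\,dx$, which the divergence theorem plus the single Fraenkel-asymmetry input $|\Omega\Delta B_{x_\Omega}(R_\Omega)|\leqslant C(n)|\Omega|\delta^{1/2}$ controls directly. You therefore skip both the moment estimate from Theorem \ref{inegaliteG}(4) and the boundary-comparison Lemma \ref{comparomegaG}. That is a real simplification.

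One small imprecision worth fixing. The paper's $\|df\|_\infty$ is \emph{not} the ordinary sup-norm of the gradient: it is $\sup_y|d_yf(y)|=\sup_y|\nabla f(y)\cdot y|$, a rescaled radial derivative (this is what makes the stated inequality scale-invariant with $\|f\|_\infty$ on equal footing). In your estimate you bounded $|\nabla f\cdot V|\leqslant|\nabla f|\,|V|\leqslant\sup|\nabla f|$, and then at the end you wrote that your result holds with ``$R_\Omega\|df\|_\infty$'' and invoked scale invariance; but $R_\Omega\sup|\nabla f|$ and $\sup_y|\nabla f(y)\cdot y|$ are not comparable quantities (e.g.\ a purely tangential $f$ on spheres has $\nabla f\cdot y\equiv0$ but $\nabla f\not\equiv0$), so the scaling argument as written does not land you on the paper's formulation. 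The fix is already built into your construction: since $V$ is radial, $V(x)=\alpha(|x|)\,x$ with $0\leqslant\alpha\leqslant\frac1R$, one has pointwise $|\nabla f(x)\cdot V(x)|=\alpha(|x|)\,|d_xf(x)|\leqslant\frac1R\,\sup_y|d_yf(y)|$, so $\|\div(fV)\|_\infty\leqslant\frac{C(n)}{R}\bigl(\|f\|_\infty+\sup_y|d_yf(y)|\bigr)$ and your bound on the main term already matches the paper's exactly, with no scaling argument needed. With that one-line replacement your proof is complete and correct.
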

\begin{proof}
Up to translation, we can assume that $x_\Omega=0$ subsequently.
Let $G\subset\Omega$ be the subset associated to $\Omega$ in Theorem \ref{inegaliteG}. We note $X$ the field $X_x=x$ for any $x\in\Omega$. We have $\div_x(fX)=df_x(X_x)+(n+1)f(x)$ and so we get
\begin{align}
\Bigl|\int_{\front(G)}f\langle X,\nu_G\rangle\dhn&-R_G\int_{S_0(R_G)}f\dhn\Bigr|\nonumber\\
=&\Bigl|\int_{\front(G)}f\langle X,\nu_G\rangle\dhn-\int_{S_0(R_G)}f\langle X,\nu_{S_{0}(R_G)}\rangle\dhn\Bigr|\nonumber\\
=&\Bigl|\int_G\div(fX)\dhnpi-\int_{B_0(R_G)}\div(fX)\dhnpi\Bigr|\nonumber\\
\leqslant&(n+1)\Bigl|\int_Gf\dhnpi-\int_{B_0(R_G)}f\dhnpi\Bigr|\nonumber\\
&+\Bigl|\int_G df(X)\dhnpi-\int_{B_0(R_G)}df(X)\dhnpi\Bigr|\nonumber\\
\leqslant&(n+1)\bigl(\|f\|_\infty+\|df\|_\infty\bigr)|G\Delta B_0(R_G)|\nonumber\\
\leqslant&\frac{20(n+1)^4}{k(n)}|G|\bigl(\|f\|_\infty+\|df\|_\infty\bigr)\delta(\Omega)^{1/2}\label{ineq3}
\end{align}
Where we have used Inequality (5) of Theorem \ref{inegaliteG}.
Now we have
\begin{align*}&\int_{\front(G)}\bigl|f(R_G-\langle X,\nu_G\rangle)\bigr|\dhn\\
&\leqslant\|f\|_\infty\int_{\front(G)}\bigl|R_G-|X|\bigr|\dhn+\|f\|_\infty\int_{\front(G)}\bigl||X|-\langle X,\nu_G\rangle\bigr|\dhn\\
&=\|f\|_\infty\int_{\front(G)}\bigl|R_G-|X|\bigr|\dhn+\|f\|_\infty\int_{\front(G)}|X|\dhn- \|f\|_\infty\int_{\front(G)}\langle X,\nu_G\rangle\dhn\\
&\leqslant 2\|f\|_\infty\int_{\front(G)}\bigl|R_G-|X|\bigr|\dhn+R_G\|f\|_\infty\hn\bigl(\front(G)\bigr)-\|f\|_\infty\int_G\div(X)\dhnpi\\
&\leqslant\frac{20(n+1)^3}{k(n)}|G|\|f\|_\infty\delta(\Omega)^{1/2}+\|f\|_\infty R_G\hn\bigl(\front(G)\bigr)-\|f\|_\infty(n+1)|G|
\end{align*}
Now a straightforward computation shows that $\delta(G)=\frac{R_G\hn\bigl(\front(G)\bigr)}{(n+1)|G|}-1$. Consequently
\begin{equation}\label{ineq4}
\int_{\front(G)}\bigl|f(R_G-\langle X,\nu_G\rangle)\bigr|\dhn\leqslant\|f\|_\infty C(n)|G|\delta(\Omega)^{1/2}
\end{equation}
Combining Inequalities \eqref{ineq3} and \eqref{ineq4} gives
\begin{eqnarray}
\frac{1}{P(\Omega)}\Bigl|\int_{\front(G)}f\dhn-\int_{\S^n(R_G)}f\dhn\Bigr|&\leqslant& C(n)\frac{|G|}{P(\Omega)R_G}\bigl(\|f\|_\infty+\|df\|_\infty\bigr)\delta(\Omega)^\frac{1}{2}\nonumber\\
&\leqslant&C(n)\bigl(\|f\|_\infty+\|df\|_\infty\bigr)\delta(\Omega)^\frac{1}{2}\label{ineq5}
\end{eqnarray}
Where we have used  Inequality (1) of Theorem \ref{inegaliteG} to get 
\begin{equation}\label{ineq14}
\frac{|G|}{P(\Omega) R_G}\leqslant C(n)\frac{|G|^\frac{n}{n+1}}{P(\Omega)}\leqslant C(n)\frac{|\Omega|^\frac{n}{n+1}}{P(\Omega)}\leqslant C(n).
\end{equation}

We have
$$\left|\frac{1}{P(\Omega)}\int_{\front(\Omega)}f\dhn
-\frac{1}{R_\Omega^n\Vol\S^n}\int_{\S^n(R_{\Omega})}f\dhn\right|\leqslant A_1+A_2+A_3+A_4$$
with
\begin{align*}
A_1&=\frac{1}{P(\Omega)}\left|\int_{\front(\Omega)}f\dhn-\int_{\front(G)}f\dhn\right|\\
A_2&=\frac{1}{P(\Omega)}\left|\int_{\front(G)}f\dhn-\int_{\S^n(R_G)}f\dhn\right|\\
A_3&=\frac{1}{P(\Omega)}\left|\int_{\S^n(R_G)}f\dhn-\int_{\S^n(R_{\Omega})}f\dhn\right|\\
A_4&=\bigl|\frac{R_\Omega^n\Vol\S^n}{P(\Omega)}-1\bigr|\|f\|_\infty\leqslant C(n)\delta(\Omega)\|f\|_\infty
\end{align*}

Note that $A_2$ is controlled by Inequality \eqref{ineq5}. Let us now estimate $A_1$. 
By Lemma \ref{comparomegaG} we have
\begin{align*} A_1&=\frac{1}{P(\Omega)}\Bigl|\int_{\front(\Omega)\setminus(\front(\Omega)\cap \front(G))}f\dhn-\int_{\front(G)\setminus(\front(G)\cap
\front(\Omega))}f\dhn\Bigr|\\
&\leqslant\|f\|_{\infty} C(n)\frac{|\Omega|^{\frac{n}{n+1}}}{P(\Omega)}\delta(\Omega)\leqslant\|f\|_{\infty} C(n)\delta(\Omega)\\
A_3&\leqslant\frac{1}{P(\Omega)}\int_{\S^n}\bigl|R_G^nf(R_Gu)-R_\Omega^nf(R_\Omega u)\bigr|\dhn\\
&\leqslant\frac{1}{P(\Omega)}\int_{\S^n}\bigl|(R_G^n-R_\Omega^n)f(R_Gu)\bigr|+R_\Omega^n\bigl|f(R_\Omega u)-f(R_Gu)\bigr|\dhn\\
&\leqslant C(n)\frac{|\Omega|^\frac{n}{n+1}-|G|^\frac{n}{n+1}}{P(\Omega)}\|f\|_{\infty}+\frac{R_\Omega^n}{P(\Omega) R_G}\int_{\S^n}\int_{R_G}^{R_\Omega}|d_{tu}f(tu)|\,dt\,du\\
&\leqslant C(n)(\|f\|_{\infty}+\|df\|_{\infty})\delta(\Omega)
\end{align*}
where once again we have used the estimates of Theorem \ref{inegaliteG}.
\end{proof}

\begin{proofequidense} Up to translation, we can assume that $x_\Omega=0$. Let $\rho\leqslant R_\Omega$. By Lemma \ref{concentration}, we have
\begin{equation}\label{ineq11}
\Bigl|\frac{\hn\bigl(\front(\Omega)\cap B_x(\rho)\bigr)}{\hn\bigl(\front(\Omega)\bigr)}-\frac{\hn\bigl(\front(\Omega)\cap B_x(\rho)\cap A_{\delta(\Omega)^\frac{1}{4}}\bigr)}{\hn\bigl(\front(\Omega)\bigr)}\Bigr|\leqslant C(n)\delta(\Omega)^{\frac{1}{4}}
\end{equation}
We set $\eta=\delta(\Omega)^\frac{1}{4}\leqslant\frac{1}{2}$ and $\varphi:[0,+\infty)\to[0,1]$ be a $C^1$ function with compact support in $(0,2R_\Omega)$, $\frac{2}{R_\Omega}$-Lipschitz and such that $\varphi(t)=1$ on $\bigl[R_\Omega(1-\eta),R_\Omega(1+\eta)\bigr]$. For any function $v\in C^1\bigl(S_0(R_\Omega)\bigr)$, we set $f(x)=\varphi(|x|)v(\frac{R_\Omega x}{|x|})$. Then $|df_x(x)|\leqslant 4\|v\|_{\infty}$ and applying Proposition \ref{frontOmegaSnROmega} to $f$, we get
\begin{equation}\label{ineq6}
\Bigl|\frac{1}{P(\Omega)}\int_{\front(\Omega)}f\dhn
-\frac{1}{R_\Omega^n\Vol\S^n}\int_{S_0(R_{\Omega})}v\dhn\Bigr|\leqslant C(n)\|v\|_\infty\delta(\Omega)^\frac{1}{2}
\end{equation}
Let $x\in S_0(R_{\Omega})$ and $v_r$ be the characteristic function of the geodesic ball of center $x$ and radius $r$ in $S_0(R_{\Omega})$. By convolution, we can approximate $v_r$ in $L^1(S_0(R_\Omega))$ by $C^1$ functions $u_k$ such that $\|u_k\|_\infty\leqslant 1$. Applying Inequality \eqref{ineq6} to $v=u_k$ and letting $k$ tends to $\infty$, we get 
\begin{equation}\label{ineq7}
\Bigl|\frac{1}{P(\Omega)}\int_{\front(\Omega)}f_r\dhn
-\frac{\hn(\mathcal{C}_{r/R_\Omega}\cap S_0(R_\Omega))}{R_\Omega^n\Vol\S^n}\Bigr|\leqslant C(n)\delta(\Omega)^\frac{1}{2}
\end{equation}
where $f_r=\varphi_{R_\Omega}(\|x\|)v_r(\frac{R_\Omega x}{\|x\|})$ and  where $\mathcal{C}_{\alpha}=\bigl\{y\in\R^{n+1}\setminus\{0\}/\,\langle\frac{y}{\|y\|},\frac{x}{\|x\|}\rangle\geqslant \cos\alpha\bigr\}$.
Now, since $\|f_r\|_\infty\leqslant1$, Lemma \ref{concentration} gives us
\begin{equation}\label{ineq8}
\frac{1}{P(\Omega)}\Bigl|\int_{\front(\Omega)}f_r\dhn-\int_{\front(\Omega)\cap A_{\delta(\Omega)^\frac{1}{4}}}f_r\dhn\Bigr|\leqslant C(n)\delta(\Omega)^{\frac{1}{4}}
\end{equation}
By construction of $f_r$, we have
\begin{equation}\label{ineq9}
\int_{\front(\Omega)\cap A_{\delta(\Omega)^\frac{1}{4}}}f_r\dhn=\hn\bigl(\front(\Omega)\cap A_{\delta(\Omega)^\frac{1}{4}}\cap\mathcal{C}_{r/R_{\Omega}}\bigr)
\end{equation}
Combining Inequalities \eqref{ineq7}, \eqref{ineq8} and \eqref{ineq9}, we get
\begin{equation}\label{ineq10}
\Bigl|\frac{\hn\bigl(\front(\Omega)\cap A_{\delta(\Omega)^\frac{1}{4}}\cap\mathcal{C}_{r/R_{\Omega}}\bigr)}{\hn\bigl(\front(\Omega)\bigr)}
-\frac{\hn\bigl(\mathcal{C}_{r/R_{\Omega}}\cap \S_0(R_\Omega)\bigr)}{R_\Omega^n\Vol\S^n}\Bigr|\leqslant C(n)\delta(\Omega)^{\frac{1}{4}}
\end{equation}

We now assume that $\delta(\Omega)^\frac{1}{4}\leqslant\frac{\rho^2}{2R_\Omega^2}$. The following angles
$${\alpha}_{\mathrm ext}=\arccos\bigl(\frac{1+(1-\delta(\Omega)^\frac{1}{4})^2-\frac{\rho^2}{R_\Omega^2}}{2(1-\delta(\Omega)^\frac{1}{4})}\bigr)\ \ \mbox{ and }\ \ {\alpha}_{\mathrm int}=\arccos\bigl(\frac{1+(1+\delta(\Omega)^\frac{1}{4})^2-\frac{\rho^2}{R_\Omega^2}}{2(1+\delta(\Omega)^\frac{1}{4})}\bigr)$$
satisfy the following property (see figure \ref{fig})
\begin{figure}\label{fig}
\centering
\def\svgwidth{0.35\textwidth}
{\tiny
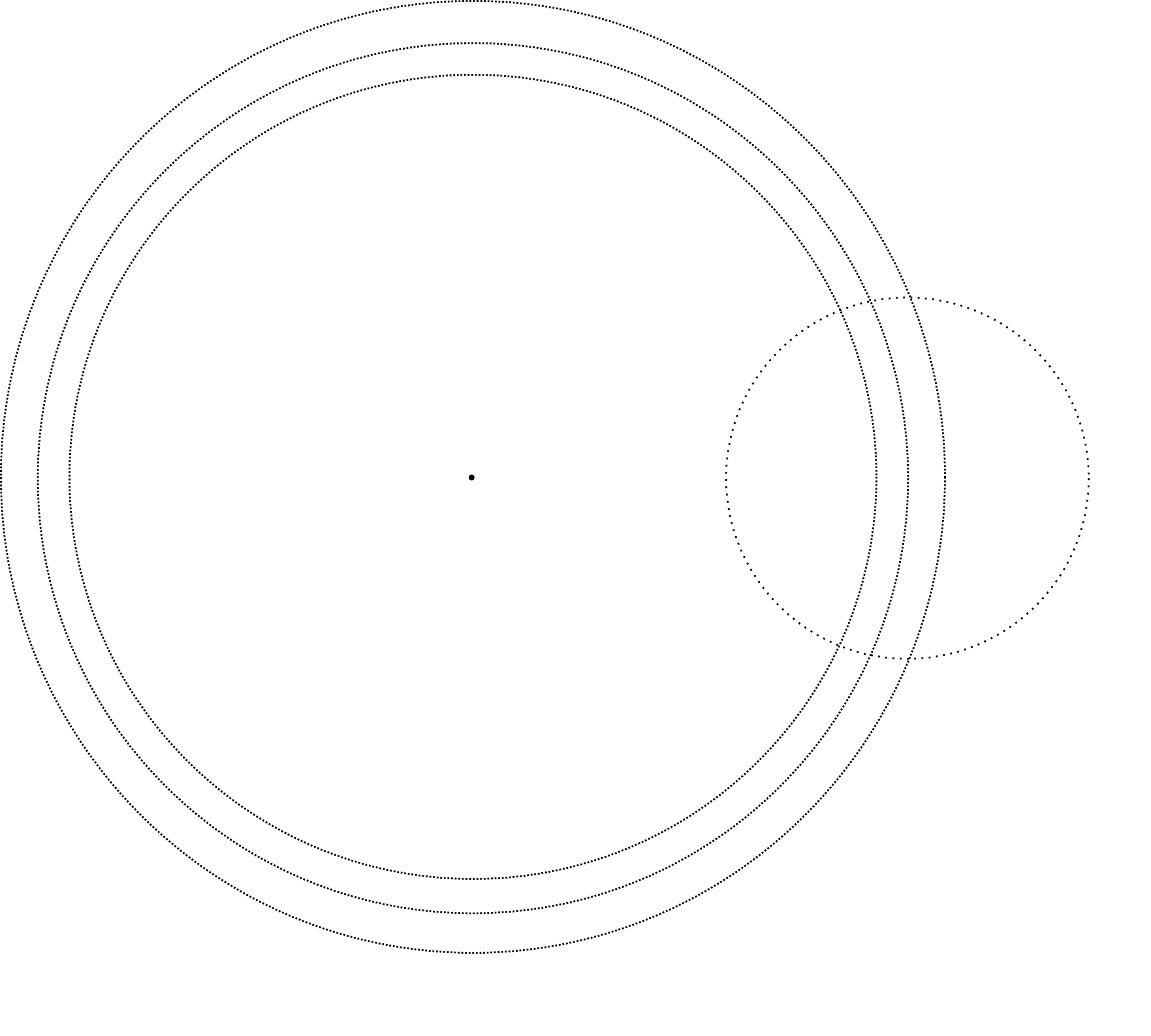}
\caption{}
\end{figure}
$$\mathcal{C}_{\mathrm int}\cap A_{\delta(\Omega)^\frac{1}{4}}\subset B_x(\rho)\cap A_{\delta(\Omega)^\frac{1}{4}}\subset\mathcal{C}_{\mathrm ext}\cap A_{\delta(\Omega)^\frac{1}{4}},$$
where we have set $C_{\mathrm int}=C_{{\alpha}_{\mathrm int}}$ and $C_{\mathrm ext}=C_{{\alpha}_{\mathrm ext}}$, so we get the following inequalities
\begin{align*}
\frac{\hn\bigl(\front(\Omega)\cap B_x(\rho)\cap A_{\delta(\Omega)^\frac{1}{4}}\bigr)}{\hn\bigl(\front(\Omega)\bigr)}&\geqslant \frac{\hn\bigl(\front(\Omega)\cap \mathcal{C}_{\mathrm int}\cap A_{\delta(\Omega)^\frac{1}{4}}\bigr)}{\hn\bigl(\front(\Omega)\bigr)}\\
&\geqslant\frac{\hn\bigl(\mathcal{C}_{\mathrm int}\cap \S_0(R_\Omega)\bigr)}{R_\Omega^n\Vol\S^n}-C(n)\delta(\Omega)^{\frac{1}{4}}\\
\frac{\hn\bigl(\front(\Omega)\cap B_x(\rho)\cap A_{\delta(\Omega)^\frac{1}{4}}\bigr)}{\hn\bigl(\front(\Omega)\bigr)}&\leqslant \frac{\hn\bigl(\front(\Omega)\cap \mathcal{C}_{\mathrm ext}\cap A_{\delta(\Omega)^\frac{1}{4}}\bigr)}{\hn\bigl(\front(\Omega)\bigr)}\\
&\leqslant\frac{\hn\bigl(\mathcal{C}_{\mathrm ext}\cap \S_0(R_\Omega)\bigr)}{R_\Omega^n\Vol\S^n}+C(n)\delta(\Omega)^{\frac{1}{4}}
\end{align*}
Since we have $B_x(\rho)\cap S_0(R_\Omega)=\mathcal{C}_{{\alpha}_\rho}\cap S_0(R_\Omega)$ for ${\alpha}_\rho=\arccos(1-\frac{\rho^2}{2R_\Omega^2})$, we infer the estimate
$$\displaylines{D=\Bigl|\frac{\hn\bigl(\front(\Omega)\cap B_x(\rho)\cap A_{\delta(\Omega)^\frac{1}{4}}\bigr)}{\hn\bigl(\front(\Omega)\bigr)}-\frac{B_x(\rho)\cap S_0(R_\Omega)}{R_\Omega^n\Vol\S^n}\Bigr|\hfill\cr
\hfill\leqslant \frac{\hn\bigl(\mathcal{C}_{\mathrm ext}\cap S_0(R_\Omega)\bigr)-\hn\bigl(\mathcal{C}_{\mathrm int}\cap S_0(R_\Omega)\bigr)}{R_\Omega^n\Vol\S^n}+C(n)\delta(\Omega)^{\frac{1}{4}}}$$
Now, by the Bishop's and Bishop-Gromov's theorems, we have
$$\hn\bigl(\mathcal{C}_{\mathrm ext}\cap S_0(R_\Omega)\bigr)=\hn(B_x^{S_0(R_{\Omega})}(R_{\Omega}\alpha_{\mathrm ext}))\leqslant R_{\Omega}^n\alpha_{\mathrm ext}^n\Vol(\B^n)=\frac{\Vol(\S^{n-1})}{n}R_{\Omega}^n\alpha_{\mathrm ext}^n$$
and
$$\frac{\hn(B_x^{S_0(R_{\Omega})}(R_{\Omega}\alpha_{\mathrm int}))}{\hn(B_0(R_{\Omega}\alpha_{\mathrm int}))}\geqslant\frac{\hn(B_x^{S_0(R_{\Omega})}(R_{\Omega}\alpha_{\mathrm ext}))}{\hn(B_0(R_{\Omega}\alpha_{\mathrm ext}))}$$
that is
$$\frac{\hn\bigl(\mathcal{C}_{\mathrm int}\cap S_0(R_\Omega)\bigr)}{\hn\bigl(\mathcal{C}_{\mathrm ext}\cap S_0(R_\Omega)\bigr)}\geqslant\frac{\alpha_{\mathrm int}^n}{\alpha_{\mathrm ext}^n},$$
where $B_x^{S_0(R_{\Omega})}(r)$ denotes the ball of center $x$ and radius $r$ in $S_0(R_{\Omega})$. These inequalities give
\begin{align*} D&\leqslant\frac{1}{R_{\Omega}^n\Vol(\S^n)}\left(1-\frac{\alpha_{\mathrm int}^n}{\alpha_{\mathrm ext}^n}\right)\frac{\Vol(\S^{n-1})}{n}R_{\Omega}^n\alpha_{\mathrm ext}^n+C(n)\delta(\Omega)^{\frac{1}{4}}\notag\\
&\leqslant\frac{\Vol(\S^{n-1})}{\Vol(\S^n)}\pi^{n-1}|\alpha_{\mathrm ext}-\alpha_{\mathrm int}|+C(n)\delta(\Omega)^{\frac{1}{4}}
\end{align*}
Since by assumption $\delta(\Omega)^\frac{1}{4}\leqslant\frac{\rho^2}{2R_\Omega^2}$, we get $|{\alpha}_{\mathrm ext}-{\alpha}_{\mathrm int}|\leqslant C(n)\delta(\Omega)^\frac{1}{4}$ which gives
$$D\leqslant C(n)\delta(\Omega)^\frac{1}{4}$$
Finally, by Lemma \ref{concentration}, we have
$$\Bigl|\frac{\hn\bigl(\front(\Omega)\cap B_x(\rho)\cap A_{\delta(\Omega)^\frac{1}{4}}\bigr)}{\hn\bigl(\front(\Omega)\bigr)}-\frac{\hn\bigl(\front(\Omega)\cap B_x(\rho)\bigr)}{\hn\bigl(\front(\Omega)\bigr)}\Bigr|\leqslant C(n)\delta(\Omega)^\frac{1}{4}$$
which gives
$$\Bigl|\frac{\hn\bigl(B_x(\rho)\cap S_0(R_\Omega)\bigr)}{R_\Omega^n\Vol\S^n}-\frac{\hn\bigl(\front(\Omega)\cap B_x(\rho)\bigr)}{\hn\bigl(\front(\Omega)\bigr)}\Bigr|\leqslant C(n)\delta(\Omega)^\frac{1}{4}.$$
\end{proofequidense}

\subsection{A control of the unit normal to $\front(\Omega)$}

In this subsection, we prove a result that we will use latter. It gives a weak control of the oscillation of the tangent planes of $\front(\Omega)$. Note that another proof of this result is proposed in \cite{FusJul}.

\begin{lemma}\label{ZL2}
Let $\Omega$ be a set of finite perimeter such that $\delta(\Omega)\leqslant\frac{1}{C(n)}$. Then we have
$$\int_{\front(\Omega)}\left|\nu_\Omega-\frac{x}{|x|}\right|^2\dhn\leqslant C(n)P(\Omega)\delta(\Omega)^\frac{1}{2}$$
\end{lemma}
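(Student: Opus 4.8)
The plan is to deduce this $L^2$-control of the unit normal from Proposition \ref{frontOmegaSnROmega} applied to a well-chosen vector field, together with the first-variation / divergence identities already used in the proof of that proposition. Up to translation assume $x_\Omega=0$, and as before let $G\subset\Omega$ be the subset given by Theorem \ref{inegaliteG}, and write $X_x=x$ for the position field. The starting point is the elementary identity
\begin{equation*}
\left|\nu_G-\frac{X}{|X|}\right|^2=2-2\left\langle\nu_G,\frac{X}{|X|}\right\rangle=\frac{2}{|X|}\bigl(|X|-\langle X,\nu_G\rangle\bigr),
\end{equation*}
so that, since on $\front(G)$ one has $|X|$ comparable to $R_G$ up to the error terms controlled in Theorem \ref{inegaliteG}(4),
\begin{equation*}
\int_{\front(G)}\left|\nu_G-\frac{X}{|X|}\right|^2\dhn\leqslant\frac{C(n)}{R_G}\int_{\front(G)}\bigl(|X|-\langle X,\nu_G\rangle\bigr)\dhn+\text{(curvature-concentration error)}.
\end{equation*}
The second term is bounded using Lemma \ref{concentration} (points far from the sphere $S_0(R_G)$ carry little measure), contributing $C(n)P(\Omega)\delta(\Omega)^{1/2}$.

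The first term is the crux, and it is handled exactly as in inequality \eqref{ineq4}: split $|X|-\langle X,\nu_G\rangle=(|X|-R_G)+(R_G-\langle X,\nu_G\rangle)$, bound $\int_{\front(G)}\bigl||X|-R_G\bigr|\dhn$ by Theorem \ref{inegaliteG}(4), and compute $\int_{\front(G)}\langle X,\nu_G\rangle\dhn=\int_G\div X\,\dhnpi=(n+1)|G|$ by the divergence theorem, while $R_G\hn(\front(G))=(n+1)|G|(1+\delta(G))$; the difference is $(n+1)|G|\delta(G)\leqslant C(n)|G|\delta(\Omega)$. Hence $\int_{\front(G)}\bigl(|X|-\langle X,\nu_G\rangle\bigr)\dhn\leqslant C(n)|G|\delta(\Omega)^{1/2}$, and dividing by $R_G$ and using \eqref{ineq14} gives a bound $C(n)P(\Omega)\delta(\Omega)^{1/2}$ for the $\front(G)$-integral.

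Finally I would pass from $\front(G)$ to $\front(\Omega)$. By Lemma \ref{comparomegaG} the symmetric difference $\front(\Omega)\triangle\front(G)$ has $\hn$-measure at most $C(n)\delta(\Omega)P(\Omega)$, and the integrand $\left|\nu-\frac{X}{|X|}\right|^2$ is bounded by $4$ pointwise; on the common part $\front(\Omega)\cap\front(G)$ the reduced normals agree $\hn$-a.e. So
\begin{equation*}
\int_{\front(\Omega)}\left|\nu_\Omega-\frac{X}{|X|}\right|^2\dhn\leqslant\int_{\front(G)}\left|\nu_G-\frac{X}{|X|}\right|^2\dhn+8\,\hn\bigl(\front(\Omega)\triangle\front(G)\bigr)\leqslant C(n)P(\Omega)\delta(\Omega)^{1/2},
\end{equation*}
which is the claim. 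The only mildly delicate point is making the replacement of $|X|$ by $R_G$ in the denominator rigorous on the subset of $\front(G)$ where $|X|$ is not close to $R_G$; this is precisely where Lemma \ref{concentration} is invoked, and since we only need an $L^1$-type bound on $|X|-\langle X,\nu_G\rangle$ weighted by $1/|X|$, one can alternatively avoid the issue entirely by bounding $\bigl||X|-\langle X,\nu_G\rangle\bigr|/|X|\le 2$ on $\front(G)\setminus A_\eta$ with $\eta=\delta(\Omega)^{1/4}$ and using $1/|X|\le C(n)/R_G$ on $A_\eta$, so no real obstacle remains.
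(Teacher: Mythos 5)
Your overall strategy is the same as the paper's: work on $\front(G)$, use the identity $\bigl|\nu_G-\tfrac{X}{|X|}\bigr|^2=2\bigl(1-\langle\nu_G,\tfrac{X}{|X|}\rangle\bigr)$, handle the resulting integral with the divergence theorem as in inequality \eqref{ineq4} and with Theorem~\ref{inegaliteG}(4), then transfer to $\front(\Omega)$ via Lemma~\ref{comparomegaG}. That skeleton is correct and the transfer step at the end is fine. But there is a genuine gap in the way you dispose of the factor $1/|X|$.

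You want
$$\int_{\front(G)}\frac{2}{|X|}\bigl(|X|-\langle X,\nu_G\rangle\bigr)\dhn\leqslant\frac{C(n)}{R_G}\int_{\front(G)}\bigl(|X|-\langle X,\nu_G\rangle\bigr)\dhn+\text{error},$$
and your proposal to get this splits $\front(G)$ along $A_\eta$ with $\eta=\delta(\Omega)^{1/4}$, bounding the integrand pointwise by $4$ on $\front(G)\setminus A_\eta$. The problem is the size of that exceptional set. Tracing the proof of Lemma~\ref{concentration} (not its conclusion), one has $\hn(\front(G)\setminus A_\eta)\leqslant C(n)\,\eta^{-1}\delta(\Omega)^{1/2}P(\Omega)$, so with $\eta=\delta(\Omega)^{1/4}$ the error term you attribute ``$C(n)P(\Omega)\delta(\Omega)^{1/2}$'' is in fact $C(n)P(\Omega)\delta(\Omega)^{1/4}$: you only prove the lemma with exponent $1/4$, not the claimed $1/2$. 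The exponent $1/4$ in Lemma~\ref{concentration} is optimized for the Hausdorff-density argument of Theorem~\ref{Hausdorff} and is not the right choice here; for your purpose you only need $|X|\geqslant R_\Omega/2$ on $A_\eta$, which holds for any fixed $\eta\leqslant 1/2$. Choosing $\eta$ to be an absolute constant (say $\eta=1/2$) makes both sides of your split come out as $C(n)P(\Omega)\delta(\Omega)^{1/2}$ and repairs the argument; as written, the proof falls short of the stated bound.

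For comparison, the paper sidesteps this entirely: instead of replacing $1/|X|$ by $1/R_G$ over most of $\front(G)$, it writes $1-\langle\nu_G,\tfrac{X}{|X|}\rangle=\bigl(1-\langle\nu_G,\tfrac{X}{R_\Omega}\rangle\bigr)+\langle\nu_G,\tfrac{X}{R_\Omega}-\tfrac{X}{|X|}\rangle$ and observes that $\bigl|\langle\nu_G,\tfrac{X}{R_\Omega}-\tfrac{X}{|X|}\rangle\bigr|\leqslant \tfrac{1}{R_\Omega}\bigl||X|-R_\Omega\bigr|$ because $|\langle\nu_G,X\rangle|\leqslant|X|$ cancels the $1/|X|$ pointwise. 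This needs no measure-splitting at all and directly plugs into Theorem~\ref{inegaliteG}(4) and inequality \eqref{ineq4}. Both routes reach the same estimate once your $\eta$ is corrected, but the paper's is cleaner and avoids the pitfall you fell into.
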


\begin{proof}
By Lemma \ref{comparomegaG} and the fact that $\nu_\Omega=\nu_G$ $\hn$-almost everywhere in $\front(G)\cap\front(\Omega)$, we have
\begin{align*}
\Bigl|\int_{\front(\Omega)}&\left|\nu_\Omega-\frac{x}{|x|}\right|^2\dhn-\int_{\front(G)}\left|\nu_G-\frac{x}{|x|}\right|^2\dhn\Bigr|\\
&=\Bigl|\int_{\front(\Omega)\setminus\front(G)}\left|\nu_\Omega-\frac{x}{|x|}\right|^2\dhn-\int_{\front(G)\setminus\front(\Omega)}\left|\nu_G-\frac{x}{|x|}\right|^2\dhn\Bigr|\leqslant 4C(n)\delta(\Omega)P(\Omega)
\end{align*}
Now,  we have
\begin{align*}
\int_{\front(G)}\left|\nu_G-\frac{x}{|x|}\right|^2\dhn=2\int_{\front(G)}\left(1-\left\langle\nu_G,\frac{x}{|x|}\right\rangle\right)\dhn
\end{align*}
and by inequality \eqref{ineq4}, we have that
\begin{align*}
\Bigl|\int_{\front(G)}\left(1-\left\langle\nu_G,\frac{x}{|x|}\right\rangle\right)\dhn\Bigr|&\leqslant\Bigl|\int_{\front(G)}\left(1-\left\langle\nu_G,\frac{x}{R_\Omega}\right\rangle\right)\dhn\Bigr|\\
&+\Bigl|\int_{\front(G)}\left\langle\nu_G,\frac{x}{|x|}-\frac{x}{R_\Omega}\right\rangle\dhn\Bigr|\\
&\leqslant C(n)R_G^n\delta(\Omega)^\frac{1}{2}+\frac{1}{R_\Omega}\int_{\front(G)}\bigl||x|-R_\Omega\bigr|\dhn\\
&\leqslant C(n)R_G^n\delta(\Omega)^\frac{1}{2}\leqslant C(n)P(G)\delta(\Omega)^\frac{1}{2}\leqslant C(n)P(\Omega)\delta(\Omega)^\frac{1}{2}
\end{align*}
where the last inequality comes from fact (4) of Theorem \ref{inegaliteG}.
\end{proof}
\subsection{A stability result involving the Preiss distance}\label{ProximityPreiss}
First we recall the definition of the Preiss distance on Radon measures of $\R^{n+1}$.

\begin{definition}
Let $\mu$ and $\nu$ be two Radon measures on $\R^{n+1}$, for any $i\in\N$, we set
$$F_i(\mu,\nu)=\sup\Bigl\{\bigl|\int fd\mu-\int fd\nu\bigr|,\ {\mathrm spt}\, f\subset B_0(i),\ f\geqslant 0,\ {\mathrm Lip}\,f\leqslant 1\Bigr\}$$
and
$$d_P(\mu,\nu)=\sum_{i\in \N}\frac{1}{2^i}\min\bigl(1,F_i(\nu,\mu)\bigr)$$
it gives a distance on the Radon measure of $\R^{n+1}$ whose converging sequences are the weakly$^\star$ converging sequences.
\end{definition}
For almost isoperimetric domains we have a control on the boundary in term of Preiss distance
\begin{theorem}\label{Preiss} Let $\Omega$ be a set of $\R^{n+1}$ with finite perimeter. Then there exists $x_\Omega\in\R^{n+1}$ such that
\begin{equation}\label{DistPreiss}
d_{P}\left(\frac{|D\chi_{B_{x_\Omega}(R_\Omega)}|}{R_\Omega^n\Vol\S^{n}},\frac{|D\chi_{\Omega}|}{P(\Omega)}\right)\leqslant C(n)\sqrt{\delta(\Omega)} 
\end{equation}
where $d_{P}$ is the Preiss distance on Radon measures of $\R^{n+1}$.
\end{theorem}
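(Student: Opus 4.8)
The plan is to recognise Theorem~\ref{Preiss} as essentially a repackaging of Proposition~\ref{frontOmegaSnROmega}. By De~Giorgi's structure theorem, $|D\chi_\Omega|$ is the restriction of $\hn$ to $\front(\Omega)$, and since a Euclidean ball is a smooth domain, $|D\chi_{B_{x_\Omega}(R_\Omega)}|$ is the restriction of $\hn$ to $S_{x_\Omega}(R_\Omega)$. Writing
$$\mu=\frac{|D\chi_{B_{x_\Omega}(R_\Omega)}|}{R_\Omega^n\Vol\S^n},\qquad \nu=\frac{|D\chi_\Omega|}{P(\Omega)},$$
which are both probability measures, Proposition~\ref{frontOmegaSnROmega} asserts exactly that, with $x_\Omega$ the centre supplied by Theorem~\ref{inegaliteG} and for every $f\in C^1_c(\R^{n+1})$,
$$\Bigl|\int f\,d\mu-\int f\,d\nu\Bigr|\leqslant C(n)\bigl(\|f\|_\infty+\|df\|_\infty\bigr)\sqrt{\delta(\Omega)}.$$
I would take this $x_\Omega$ as the point in the statement (and we may assume $0<|\Omega|<\infty$, the other cases being degenerate).

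Next I would bound $F_i(\mu,\nu)$ for each $i\in\N$. Fix a competitor $f$ with $f\geqslant 0$, $\mathrm{Lip}\,f\leqslant1$ and $\mathrm{spt}\,f\subset B_0(i)$. Evaluating $f$ against a point at distance $i+1$ from the origin, where $f$ vanishes, shows $\|f\|_\infty\leqslant i+1$; and since $d_xf(x)$ vanishes off $\mathrm{spt}\,f$ while $|d_xf(x)|\leqslant|x|$ on it, the quantity $\|df\|_\infty$ of Proposition~\ref{frontOmegaSnROmega} is at most $i$. As that proposition is stated for $f\in C^1_c$, I would mollify: $f_\varepsilon=f*\rho_\varepsilon$ is smooth, nonnegative, $1$-Lipschitz, supported in $B_0(i+\varepsilon)$, satisfies the same two bounds up to an $O(\varepsilon)$ error, and converges uniformly to $f$; applying the displayed inequality to $f_\varepsilon$ and letting $\varepsilon\to0$ (legitimate because $\mu,\nu$ are finite and all the supports sit in one fixed bounded set) yields $\bigl|\int f\,d\mu-\int f\,d\nu\bigr|\leqslant C(n)(2i+1)\sqrt{\delta(\Omega)}$. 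Taking the supremum over such $f$ gives $F_i(\mu,\nu)\leqslant C(n)(i+1)\sqrt{\delta(\Omega)}$, and the same bound holds for $F_i(\nu,\mu)$ since $F_i$ is symmetric in its two arguments.

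Finally I would sum the resulting geometric series:
$$d_P(\mu,\nu)=\sum_{i\in\N}\frac{1}{2^i}\min\bigl(1,F_i(\nu,\mu)\bigr)\leqslant C(n)\sqrt{\delta(\Omega)}\sum_{i\in\N}\frac{i+1}{2^i}=C'(n)\sqrt{\delta(\Omega)},$$
which is the asserted estimate. This chain relies on Proposition~\ref{frontOmegaSnROmega}, hence on $\delta(\Omega)\leqslant 1/C(n)$; when $\delta(\Omega)$ exceeds that threshold the conclusion is automatic, since $d_P(\mu,\nu)\leqslant\sum_{i\in\N}2^{-i}=2\leqslant C'(n)\sqrt{\delta(\Omega)}$ after possibly enlarging $C'(n)$. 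There is no substantive obstacle here: the only points needing attention are the mollification step (to pass from $C^1_c$ to Lipschitz test functions) and bookkeeping the linear growth of $\|f\|_\infty$ and $\|df\|_\infty$ in $i$, which is comfortably absorbed by the weights $2^{-i}$.
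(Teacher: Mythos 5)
Your argument is correct and is essentially the same as the paper's: both reduce Theorem~\ref{Preiss} to Proposition~\ref{frontOmegaSnROmega} by mollifying a $1$-Lipschitz competitor $f$ supported in $B_0(i)$, observing that $\|f\|_\infty$ and $\sup_y|d_yf(y)|$ both grow at most linearly in $i$, and summing the resulting bound on $F_i$ against the weights $2^{-i}$. The only (harmless) cosmetic differences are your slightly looser $\|f\|_\infty\leqslant i+1$ in place of the paper's $\leqslant i$, and your making the trivial case $\delta(\Omega)>1/C(n)$ explicit up front rather than at the end as the paper does.
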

\begin{proof} Note that if $f$ has support in $B_0(i)$ and is $1$-Lipschitz, then by convolution, it can be uniformly approximated by a sequence of $1$-Lipschitz, $C^1$ and compactly supported functions $(f_k)$. We then have $\displaystyle\lim_k \|f_k\|_\infty=\|f\|_\infty\leqslant i$ and $\displaystyle\lim_k|d_Xf_k(X)|\leqslant\displaystyle\lim_k\|df_k\|_\infty i\leqslant i$ and applying Proposition \ref{frontOmegaSnROmega} to $f_k$ and letting $k$ tends to $\infty$ gives us
$$\Bigl|\frac{1}{P(\Omega)}\int_{\front(\Omega)}f\dhn
-\frac{1}{R_\Omega^n\Vol\S^n}\int_{S_{x_\Omega}(R_{\Omega})}f\dhn\Bigr|\leqslant 2iC(n)\delta(\Omega)^\frac{1}{2}$$
and so
$$F_i\Bigl(\frac{|D\chi_\Omega|}{P(\Omega)},\frac{|D\chi_{B_{x_\Omega}(R_G)}|}{P(B_{x_\Omega}(R_G))}\Bigr)\leqslant 2iC(n)\sqrt{\delta(\Omega)}$$
Hence we get that if $\delta(\Omega)\leqslant\frac{1}{C(n)}$,  then we have
$$d_P\left(\frac{|D\chi_\Omega|}{P(\Omega)},\frac{|D\chi_{B_{x_\Omega}(R_G)}|}{P(B_{x_\Omega}(R_G))}\right)\leqslant C(n)\sqrt{\delta(\Omega)}$$
Since for any couple of measures $\mu,\nu$ we have $d_P(\mu,\nu)\leqslant 2$, we infer that we can leave the condition $\delta(\Omega)\leqslant\frac{1}{C(n)}$ as soon as we consider a larger $C(n)$.
\end{proof}

\section{Domains with small deficit and $\|\H\|_p$ bounded in the case $p\leqslant n$}
\subsection{Proof of Theorems \ref{distancefrontspheremodel} and \ref{Hausdorffcourbure}}
These theorems  are consequence of the following.
\begin{theorem}({\sc E. Aubry, J.-F. Grosjean, \cite{AubGro2}})\label{arbre}
There exists a (computable) constant $C=C(m)$ such that, for any compact submanifold $M^m$ of $\R^{n+1}$ and any closed subset $A\subset M$ that intercepts any connected component of $M$, there exists a finite family $(T_i)_{\in I}$ of geodesic trees in $M$ with  $A\cap T_i\neq\emptyset$ for any $i\in I$, $d_H\bigl(A\cup\displaystyle\bigcup_{i\in I} T_i,M\bigr)\leqslant C \bigl(\Vol(M\setminus A)\bigr)^\frac{1}{m}$ and $\displaystyle\sum_{i\in I}\h1(T_i)\leqslant C^{m(m-1)}\displaystyle\int_{M\setminus A}|\H|^{m-1}$.
\end{theorem}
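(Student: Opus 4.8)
The plan is to construct the trees by a greedy frontier-growing procedure and then to bound their total length by combining a packing argument with a localised Michael--Simon / Topping type diameter estimate. One first reduces to the case where $M$ is connected: the Hausdorff distance from $A\cup\bigcup_{i}T_{i}$ to $M$ is the maximum of the corresponding quantities over the connected components of $M$ and does not decrease if one drops all other components, while $\Vol(M\setminus A)$ and $\int_{M\setminus A}|\H|^{m-1}$ are additive over components, and $A$ still meets the component one keeps. So it suffices to treat a connected $M$.

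Set $\rho:=C_{1}(m)\bigl(\Vol(M\setminus A)\bigr)^{1/m}$, with $C_{1}(m)$ to be fixed at the end. I would build minimising geodesic arcs $\gamma_{1},\gamma_{2},\dots$ in $M$ as follows: given $F_{j-1}:=A\cup\gamma_{1}\cup\cdots\cup\gamma_{j-1}$, if every point of $M$ lies within intrinsic distance $\rho$ of $F_{j-1}$ I stop; otherwise I pick $x_{j}$ maximising $\dist_{M}(\cdot,F_{j-1})$ and take $\gamma_{j}$ to be a unit-speed minimising geodesic from $x_{j}$ to a nearest point $p_{j}\in F_{j-1}$, so that $\h1(\gamma_{j})=\dist_{M}(x_{j},F_{j-1})>\rho$ and $\gamma_{j}$ meets $F_{j-1}$ only at $p_{j}$. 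Since each new arc attaches a genuinely new point $x_{j}$ to the current set at the single point $p_{j}$, no cycle is ever created, so $A\cup\bigcup_{j}\gamma_{j}$ is exactly $A$ together with finitely many geodesic trees $T_{i}$, each meeting $A$ (each $\gamma_{j}$ either starts a new tree rooted on $A$ or prolongs an already built one, possibly at an interior branch point). The Hausdorff bound of the statement is then immediate from the stopping rule, since $A\cup\bigcup_{i}T_{i}\subset M$ and $d_{H}\le\dist_{M}\le\rho$ from every point of $M$ to it; so everything reduces to the estimate on $\sum_{j}\h1(\gamma_{j})$, which in turn forces the procedure to be finite because each arc has length $>\rho$.

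For that estimate, note that along $\gamma_{j}$ the function $\dist_{M}(\cdot,F_{j-1})$ decreases at unit speed, hence the sub-arc $\sigma_{j}\subset\gamma_{j}$ on which this distance is $\ge\rho/2$ has length $\ge\tfrac12\h1(\gamma_{j})$, lies at intrinsic distance $\ge\rho/2$ from $A$, and the various $\sigma_{j}$ are pairwise $\tfrac{\rho}{2}$-separated in $M$ (a point of $\sigma_{k}$, $k>j$, is at distance $\ge\rho/2$ from $F_{k-1}\supset\gamma_{j}$). Choosing on each $\sigma_{j}$ about $\h1(\gamma_{j})/\rho$ points pairwise more than $\rho/4$ apart produces a $\tfrac{\rho}{4}$-separated set $Y\subset M\setminus A$, at distance $\ge\rho/2$ from $A$, with $\Card Y\ge\rho^{-1}\sum_{j}\h1(\gamma_{j})$; so it suffices to bound $\Card Y$ by $C(m)\rho^{-1}\int_{M\setminus A}|\H|^{m-1}$. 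The intrinsic balls $B^{M}(y,\rho/16)$, $y\in Y$, are then pairwise disjoint and contained in $M\setminus A$. The key ingredient is a localised diameter estimate in the spirit of Topping, provable from the Michael--Simon Sobolev inequality on $M$: there are $c(m),C(m)>0$ such that for every $y\in M$ and $r>0$, either $\Vol\bigl(B^{M}(y,r)\bigr)\ge c(m)r^{m}$, or $r\le C(m)\int_{B^{M}(y,2r)}|\H|^{m-1}$. Applying this with $r=\rho/16$, the $y$'s of the first (``thick'') type satisfy $c(m)(\rho/16)^{m}\,\Card\{\text{thick }y\}\le\Vol(M\setminus A)=(\rho/C_{1})^{m}$, so fixing $C_{1}(m)$ large (depending only on $m$) forces their number to be $<1$, hence $0$; while for the remaining (``thin'') $y$'s one sums $\rho/16\le C(m)\int_{B^{M}(y,\rho/8)}|\H|^{m-1}$ over the pairwise disjoint balls $B^{M}(y,\rho/8)\subset M\setminus A$ and obtains $\rho\,\Card Y\le C(m)\int_{M\setminus A}|\H|^{m-1}$, which is what we want. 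Tracking the constants through this Michael--Simon / Topping step should produce the stated bound $C^{m(m-1)}$.

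The hard part is exactly this localised diameter estimate. An $L^{m-1}$ control of $|\H|$ does not prevent the density of $M$ from collapsing at large scales --- thin tubes around long curves, and dumbbell-type necks, are the extremal configurations, and for them the length of the approximating tree is genuinely comparable to $\int|\H|^{m-1}$ --- so Allard-type monotonicity, which requires an $L^{\infty}$ (or $L^{p}$ with $p>m$) bound on the mean curvature, is unavailable, and one must run the Michael--Simon inequality at the correct, geometry-adapted scale in order to convert a density drop into a concentration of curvature. By comparison, the Hausdorff bound, the packing argument, and the verification that the greedy arcs assemble into a genuine finite forest attached to $A$ are soft.
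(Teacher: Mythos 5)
This theorem is stated in the paper without proof: it is quoted from the authors' earlier preprint [AubGro2] (arXiv:1210.5689), so there is no in-paper argument to compare against. Your plan --- greedy farthest-point insertion of minimising geodesic arcs at the scale $\rho\asymp\bigl(\Vol(M\setminus A)\bigr)^{1/m}$, the observation that each new arc meets the current set only at its foot so that the union is a finite forest rooted on $A$, the immediate Hausdorff bound from the stopping rule, and finally a packing argument that converts the total arc length into a curvature integral via a Topping-type dichotomy deduced from the Michael--Simon inequality --- is the natural route and, as far as I can tell, matches what [AubGro2] does.

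One step deserves flagging. The same-scale dichotomy you invoke, namely ``for every $y\in M$ and $r>0$, either $\Vol\bigl(B^M(y,r)\bigr)\ge c(m)r^m$ or $r\le C(m)\int_{B^M(y,2r)}|\H|^{m-1}$,'' is \emph{stronger} than what Topping's lemma (his Lemma~1.2) literally produces, which under the low-density hypothesis only supplies \emph{some smaller} scale $\sigma\le r$ with $\sigma\le C\int_{B(y,\sigma)}|\H|^{m-1}$. With only this weak, scale-dropping form, the discrete packing on the $\rho/4$-separated set $Y$ does not close, because $\sum\sigma_y$ can be much smaller than $\rho\,\Card Y$. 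Two clean repairs are available. (i) Upgrade to the same-scale form by an internal chain/Vitali iteration: run a minimising geodesic of length $r/2$ inside $B(y,r)$, apply the weak dichotomy at every point of it, extract a disjoint Vitali subfamily of the concentration balls, and use that a minimising geodesic has length at most the ball diameter inside each covering ball; summing reproduces exactly the step by which Topping passes from his lemma to the global diameter bound, and yields the same-scale dichotomy with worse constants. (ii) Alternatively, skip the discrete set $Y$ altogether and run the Vitali covering directly on the one-dimensional sub-arcs $\sigma_j$ (on which $\dist_M(\cdot,F_{j-1})\ge\rho/2$): apply the weak dichotomy at each point $z\in\bigcup_j\sigma_j$ with $R\asymp\rho$, note all such $z$ must be ``thin'' once $C_1(m)$ is large because a single thick ball would already exceed $\Vol(M\setminus A)$, extract disjoint concentration balls, and use the $\rho/2$-separation of the $\sigma_j$'s to see each Vitali ball meets only one arc; since a minimising geodesic has $\le 10\sigma_k$ length inside $B(z_k,5\sigma_k)$, the weak form suffices. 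Either repair is routine once named, and tracking constants through the iterated Michael--Simon step is what produces the stated $C^{m(m-1)}$; but as written the proposal silently uses the strong form, so this should be made explicit.

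Everything else --- the reduction to connected $M$, the verification that no cycles arise because each $\gamma_j$ meets $F_{j-1}$ only at its foot $p_j$, that the first arc of each component must foot on $A$, the $\rho/2$-separation between the $\sigma_j$'s, and the observation that the stopping rule plus intrinsic-to-extrinsic distance comparison gives the Hausdorff bound --- is correct.
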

\begin{remark} Note that by construction the $A\cup\displaystyle\bigcup_{i\in I} T_i$ has the same number of connected components than $A$.
\end{remark}

\begin{prooftheoremcurvature}
We set $\partial_r\Omega$ the union of the connected components of $\partial\Omega$ that intercept $A_{\delta(\Omega)^\frac{1}{4}}$ and we apply Theorem \ref{arbre} to the hypersurface $\partial_r\Omega$ and the set
$A_0=\partial\Omega\cap A_{\delta(\Omega)^{1/4}}=\partial_r\Omega\cap A_{\delta(\Omega)^{1/4}}$. We set $T_0$ the union of the trees given by the theorem. Then we get $\h1(T_0)\leqslant C(n)\displaystyle\int_{\partial_r\Omega\setminus A_0}|\H|^{n-1}$, the set $A_{\delta(\Omega)^{1/4}}\cup T_0$ is connected and by the first point of Theorem \ref{Hausdorff} (or Lemma \ref{concentration}) and Theorem \ref{arbre}, we have
\begin{align*}d_H(A_0\cup T_0,\partial_r\Omega)&\leqslant C(n)\hn(\partial_r\Omega\setminus A_0)^{\frac{1}{n}}\leqslant C(n)\hn(\partial\Omega\setminus A_{\delta(\Omega)^{1/4}})^{\frac{1}{n}}\\
&\leqslant C(n)P(\Omega)^{1/n}\delta(\Omega)^{\frac{1}{4n}}\leqslant C(n)R_\Omega\delta(\Omega)^\frac{1}{4n}
\end{align*}
If we now apply Theorems \ref{arbre} and Theorem \ref{Hausdorff} to each connected component $C_i$ of $\partial\Omega\setminus\partial_r\Omega$ with $A_i=\{x_i\}\subset C_i$, we get a connected union of trees $T_i$ such $d_H(T_i,C_i)\leqslant C(n)R_\Omega\delta(\Omega)^\frac{1}{4n}$ and $\h1(T_i)\leqslant C(n)\displaystyle\int_{C_i}|\H|^{n-1}\dhn$. If we set $T=T_0\cup\displaystyle\bigcup_{i\in I}T_i$, then we have $\h1(T)\leqslant C(n)\displaystyle\int_{\partial\Omega\setminus A_{\delta(\Omega)^\frac{1}{4}}}|\H|^{n-1}\dhn$ and
\begin{align*}
d_H\bigl(\partial\Omega,& \,S_{x_\Omega}(R_{\Omega})\cup T\bigr)\leqslant \max \Bigl(d_H\bigl(\partial_r\Omega,S_{x_\Omega}(R_\Omega)\cup T_0\bigr),(d_H(C_i,T_i))_{i\in I}\Bigr)\\
&\leqslant\max\Bigl(d_H(\partial_r\Omega,A_0\cup T_0)+d_H\bigl(A_0\cup T_0,S_{x_\Omega}(R_{\Omega})\cup T_0\bigr),C(n)R_\Omega\delta(\Omega)^\frac{1}{4n}\Bigr)\\
&\leqslant C(n)R_\Omega\delta(\Omega)^\frac{1}{4n}+d_H\bigl(A_0,S_{x_\Omega}(R_\Omega)\bigr)\\
&\leqslant C(n)R_\Omega\delta(\Omega)^{\beta(n)}
\end{align*}
the last inequality comes from Theorem \ref{Hausdorff}. This completes the proof of Theorem \ref{distancefrontspheremodel}.

Now to prove Theorem \ref{Hausdorffcourbure}, we have
\begin{align*}
d_H\left(\partial\Omega, S_{x_\Omega}(R_\Omega)\cup\left(\bigcup_{i\in I}\{x_i\}\right)\right)\\
&\hspace{-3cm}\leqslant d_H\bigl(\partial\Omega,S_{x_\Omega}(R_\Omega)\cup T\bigr)+d_H\left(S_{x_\Omega}(R_\Omega)\cup T,S_{x_\Omega}(R_\Omega)\cup \left(\bigcup_{i\in I}\{x_i\}\right)\right)\\
&\hspace{-3cm}\leqslant C(n)R_\Omega\delta(\Omega)^{\beta(n)}+d_H\left(T,(T_0\cap S_{x_\Omega}(R_\Omega))\cup\left(\bigcup_{i\in I}\{x_i\}\right)\right)\\
&\hspace{-3cm}\leqslant C(n)R_\Omega\delta(\Omega)^{\beta(n)}+\max\bigl(d_H\bigl(T_0,T_0\cap S_{x_\Omega}(R_\Omega)\bigr),(d_H(T_i,\{x_i\}))_{i\in I}\bigr)\\
&\hspace{-3cm}\leqslant C(n)R_\Omega\delta(\Omega)^{\beta(n)}+C(n)\int_{\partial\Omega\setminus A_{\delta(\Omega)^\frac{1}{4}}}|\H|^{n-1}\dhn
\end{align*}
To finish the proof of Theorem \ref{Hausdorffcourbure} we just have to use H\"older's inequality and Lemma \ref{concentration}. 
For what concerns cardinality of $I$, remark that the Michael-Simon Inequality applied to the function $f=1$ and to any connected component $C$ of $\partial\Omega\setminus\partial_r\Omega$ gives us
$$\hn(C)^\frac{n-1}{n}\leqslant C(n)\int_{C}|\H|\dhn\leqslant C(n)\Bigl(\int_{C}|\H|^n\dhn\Bigr)^\frac{1}{n}(\hn(C)^\frac{n-1}{n}$$
and so $\int_C|\H|^n\dhn\geqslant\frac{1}{C(n)}$ for any connected component of $\partial\Omega\setminus\partial_r\Omega$. We infer that 
 $$\frac{\Card(I)}{C(n)}\leqslant\sum_C\int_C|\H|^n\dhn\leqslant\int_{\partial\Omega\setminus A_{\delta(\Omega)^\frac{1}{4}}}|\H|^n\dhn$$
 we conclude for any $p\geqslant n$ by H\"older inequality and Lemma \ref{concentration}.
\end{prooftheoremcurvature}

\subsection{Variants of Theorems \ref{distancefrontspheremodel} and \ref{Hausdorffcourbure} that generalize inequality \eqref{fuglede} }\label{postponed}
\begin{theorem}\label{distanceballmodel} Let $\Omega$ be an open set with  smooth boundary, finite perimeter and $\delta(\Omega)\leqslant\frac{1}{C(n)}$. There exists a subset $T\subset \R^{n+1}$ with 
\begin{enumerate}
\item $\mathcal{H}^1(T)\leqslant C(n)R_\Omega \int_{\partial\Omega\setminus B_{x_\Omega}\bigl(R_\Omega(1+\delta(\Omega)^\frac{1}{4})\bigr)}|\H|^{n-1}\dhn$,
\item $d_H\bigl(\Omega,B_{x_\Omega}(R_{\Omega})\cup T\bigr)\leqslant C(n)R_\Omega\delta(\Omega)^{\beta(n)}$,
\item the set
 $B_{x_\Omega}\bigl(R_\Omega(1+\delta(\Omega)^\frac{1}{4})\bigr)\cup T$ has at most $N+1$ connected components,
\end{enumerate}
where $N$ is the number of connected components of $\partial\Omega$ that do not intercept the ball $B_{x_\Omega}\bigl(R_\Omega(1+\delta(\Omega)^\frac{1}{4})\bigr)$.
\end{theorem}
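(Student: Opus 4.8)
The plan is to follow very closely the argument used for Theorems \ref{distancefrontspheremodel} and \ref{Hausdorffcourbure}, replacing the annulus $A_{\delta(\Omega)^{1/4}}$ everywhere by the dilated ball $B':=B_{x_\Omega}\bigl(R_\Omega(1+\delta(\Omega)^{1/4})\bigr)$, and using Remark \ref{DomainHausdorff}(2) to control the interior of $\Omega$. The conceptual point is that here we measure the Hausdorff distance to the \emph{domain} $\Omega$, not to the hypersurface $\partial\Omega$: a piece of $\partial\Omega$ that bulges \emph{inward} (creating a cavity in $\Omega$) needs no tree, because the smallness of $|\Omega\Delta B_{x_\Omega}(R_\Omega)|$ (Theorem \ref{inegaliteG}(5)) already forbids large cavities. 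Only the components of $\partial\Omega$ that leave $\overline{B'}$ have to be joined to the ball by trees, which is why the integral of $|\H|^{n-1}$ in (1) is taken only over $\partial\Omega\setminus B'$.

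First I would split the components $(\partial\Omega_i)_i$ of $\partial\Omega$ into those meeting $\overline{B'}$ and the $N$ components $\partial\Omega_i$, $i\in I$, lying entirely outside $\overline{B'}$. To a component meeting $\overline{B'}$ I apply Theorem \ref{arbre} with $M=\partial\Omega_i$ and $A=A_i:=\partial\Omega_i\cap\overline{B'}$ (nonempty, closed); to each $\partial\Omega_i$, $i\in I$, I apply it with $A_i=\{x_i\}$ for some $x_i\in\partial\Omega_i$. Let $T$ be the union of all trees produced. Theorem \ref{arbre} gives $\h1(T)\leqslant C(n)\sum_i\int_{\partial\Omega_i\setminus A_i}|\H|^{n-1}\dhn\leqslant C(n)\int_{\partial\Omega\setminus\overline{B'}}|\H|^{n-1}\dhn$, which yields (1). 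For (3): every tree attached to a component meeting $\overline{B'}$ intersects $A_i\subset\overline{B'}$, hence is connected to the connected set $\overline{B'}$; the $N$ remaining families of trees are each connected (their $A_i$ is a point), so $\overline{B'}\cup T$ — hence $B'\cup T$ up to the standard harmless adjustment near $S_{x_\Omega}(R_\Omega(1+\delta(\Omega)^{1/4}))$ — has at most $N+1$ connected components. Note also, for later use, that since $A_{\delta(\Omega)^{1/4}}\subset\overline{B'}$, Lemma \ref{concentration} and $P(\Omega)\leqslant C(n)R_\Omega^n$ give $\Vol(\partial\Omega\setminus\overline{B'})\leqslant\Vol(\partial\Omega\setminus A_{\delta(\Omega)^{1/4}})\leqslant C(n)R_\Omega^n\delta(\Omega)^{1/4}$, so Theorem \ref{arbre} makes every point of each $\partial\Omega_i$ lie within $C(n)R_\Omega\delta(\Omega)^{1/4n}$ of $A_i\cup T\subset\overline{B'}\cup T$.

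For (2) I would estimate both inclusions defining $d_H$. On one side, $T\subset\partial\Omega\subset\overline\Omega$ gives $\dist(T,\Omega)=0$, and by Remark \ref{DomainHausdorff}(2) every point of $B_{x_\Omega}(R_\Omega)$ is within $C(n)R_\Omega\delta(\Omega)^{1/(2(n+1))}$ of $\Omega$. On the other side I split $\Omega=(\Omega\cap\overline{B'})\cup(\Omega\setminus\overline{B'})$: points of $\Omega\cap\overline{B'}$ are within $R_\Omega\delta(\Omega)^{1/4}$ of $B_{x_\Omega}(R_\Omega)$; and for $p\in\Omega\setminus\overline{B'}$, if $\varepsilon<\min\bigl(\dist(p,\partial\Omega),\dist(p,B_{x_\Omega}(R_\Omega))\bigr)$ then $B_p(\varepsilon)\subset\Omega\setminus B_{x_\Omega}(R_\Omega)$, so $c_n\varepsilon^{n+1}\leqslant|\Omega\Delta B_{x_\Omega}(R_\Omega)|\leqslant C(n)|\Omega|\delta(\Omega)^{1/2}$, whence $\min\bigl(\dist(p,\partial\Omega),\dist(p,B_{x_\Omega}(R_\Omega))\bigr)\leqslant C(n)R_\Omega\delta(\Omega)^{1/(2(n+1))}$. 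Combining this with the estimate of the previous paragraph (and $\dist(\overline{B'},B_{x_\Omega}(R_\Omega))\leqslant R_\Omega\delta(\Omega)^{1/4}$) gives $\dist(p,B_{x_\Omega}(R_\Omega)\cup T)\leqslant C(n)R_\Omega\delta(\Omega)^{\beta(n)}$, since $\min\bigl(\tfrac1{2(n+1)},\tfrac1{4n},\tfrac14\bigr)\geqslant\beta(n)$. This proves (2).

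The main obstacle is the conceptual observation already stressed: one must see that inward bulges and cavities of $\Omega$ may be discarded, which is what reduces the tree machinery of Theorems \ref{arbre}/\ref{distancefrontspheremodel} to the exterior piece $\partial\Omega\setminus\overline{B'}$. The technical support for this is the elementary density estimate above, showing that $|\Omega\Delta B_{x_\Omega}(R_\Omega)|$ small forces every point of $\Omega$ outside the dilated ball to be Hausdorff-close to $\partial\Omega$, hence to $\overline{B'}\cup T$. Everything else is bookkeeping of exponents, all of which dominate $\beta(n)$.
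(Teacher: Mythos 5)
Your proposal follows essentially the same route as the paper's proof: decompose $\partial\Omega$ into components meeting the dilated ball $B'=B_{x_\Omega}(R_\Omega(1+\delta(\Omega)^{1/4}))$ and the $N$ components lying outside it, apply Theorem~\ref{arbre} with $A_0$ taken inside $\overline{B'}$ (resp.\ a single point) to produce the trees, bound $\hn(\partial\Omega\setminus B')$ via Lemma~\ref{concentration} since $A_{\delta(\Omega)^{1/4}}\subset \overline{B'}$, and then handle the interior of $\Omega$ by splitting into the part inside a slightly dilated ball (close to $B_{x_\Omega}(R_\Omega)$ by radius comparison) and the part outside (forced to be $\delta(\Omega)^{1/(2(n+1))}$-close to $\partial\Omega$ or to $B_{x_\Omega}(R_\Omega)$ by the $|\Omega\Delta B_{x_\Omega}(R_\Omega)|$ small ball argument of Remark~\ref{DomainHausdorff}), with the final exponent bookkeeping landing on $\beta(n)$. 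This matches the paper's proof in both structure and estimates; the only cosmetic difference is that the paper splits $\Omega$ at radius $R'_\Omega=R_\Omega(1+2C(n)\delta(\Omega)^{1/(2(n+1))})$ rather than at $\overline{B'}$, but both choices give the same exponents, and your per-component application of Theorem~\ref{arbre} is the same as the paper's application to $\partial_r\Omega$ and to each remaining component.
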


\begin{theorem}\label{Hausdorffcourbureball} 
 Let $p\geqslant n-1$ and $\Omega$ be an open set with smooth boundary $\partial\Omega$, finite perimeter and $\delta(\Omega)\leqslant\frac{1}{C(n)}$.  Let $(\partial\Omega_i)_{i\in I}$ be the connected components of $\partial\Omega$ that do not intercept the ball $B_{x_\Omega}\bigl(R_\Omega(1+ \delta(\Omega)^\frac{1}{4})\bigr)$. For any $i\in I$, there exists $x_i\in\partial\Omega_i$ such that 
\begin{align*}
d_H\bigl(\Omega, B_{x_\Omega}(R_{\Omega})\cup\bigcup_{i\in I}\{x_i\}\bigr)\leqslant C(n,p)R_\Omega\left[\delta(\Omega)^{\beta(n)}+\delta(\Omega)^{\frac{p-n+1}{4p}}(P(\Omega)^{\frac{1}{n}}\|\H\|_p)^{n-1}\right]
\end{align*}
Moreover if $p\geqslant n$ and $\H$ is $L^p$-integrable then $I$ is of finite cardinal $N$ and we have
$$N\leqslant C(n,p)P(\Omega)\|\H\|_p^n\delta(\Omega)^\frac{p-n}{4p}.$$
\end{theorem}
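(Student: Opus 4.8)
To prove Theorem \ref{Hausdorffcourbureball} the plan is to first establish Theorem \ref{distanceballmodel} and then deduce Theorem \ref{Hausdorffcourbureball} from it exactly as Theorem \ref{Hausdorffcourbure} is deduced from Theorem \ref{distancefrontspheremodel}. For Theorem \ref{distanceballmodel} I would run the proof of Theorem \ref{distancefrontspheremodel} with the annulus $A_{\delta(\Omega)^{1/4}}$ replaced throughout by the inflated model ball $B':=B_{x_\Omega}\bigl(R_\Omega(1+\delta(\Omega)^{1/4})\bigr)$, adding one elementary lemma to handle points of the \emph{solid} domain: for every $y\in\Omega$,
$$\min\bigl(\dist(y,\partial\Omega),\dist(y,B_{x_\Omega}(R_\Omega))\bigr)\leqslant C(n)R_\Omega\,\delta(\Omega)^{\frac{1}{2(n+1)}}.$$
Indeed, if $\rho$ denotes the left-hand side then the Euclidean ball $B_y(\rho)$ lies in $\Omega$ (since $\rho\leqslant\dist(y,\partial\Omega)$) and is disjoint from $B_{x_\Omega}(R_\Omega)$ (since $\rho\leqslant\dist(y,B_{x_\Omega}(R_\Omega))$), so $|\B^{n+1}|\rho^{n+1}\leqslant|\Omega\setminus B_{x_\Omega}(R_\Omega)|\leqslant|\Omega\Delta B_{x_\Omega}(R_\Omega)|\leqslant C(n)|\Omega|\delta(\Omega)^{1/2}$ by Remark \ref{DomainHausdorff}(1), and the claim follows from $|\Omega|=|\B^{n+1}|R_\Omega^{n+1}$. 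Together with Remark \ref{DomainHausdorff}(2), which already says that every point of $B_{x_\Omega}(R_\Omega)$ is $C(n)R_\Omega\delta(\Omega)^{1/(2(n+1))}$-close to $\Omega$, this is the only ingredient absent from the spherical case.

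For the trees, let $\partial_r\Omega$ be the union of the components of $\partial\Omega$ that meet $B'$ and $(\partial\Omega_i)_{i\in I}$ the components that do not. Applying Theorem \ref{arbre} to $\partial_r\Omega$ with anchor $A_0:=\partial_r\Omega\cap B'$ (which meets every component of $\partial_r\Omega$) yields a finite union of geodesic trees $T_0\subset\partial_r\Omega$, each meeting $A_0\subset B'$, with $\h1(T_0)\leqslant C(n)\int_{\partial_r\Omega\setminus B'}|\H|^{n-1}\dhn$ and $d_H(A_0\cup T_0,\partial_r\Omega)\leqslant C(n)\hn(\partial_r\Omega\setminus A_0)^{1/n}$; since $\partial_r\Omega\setminus A_0\subset\partial\Omega\setminus A_{\delta(\Omega)^{1/4}}$, Lemma \ref{concentration} and $P(\Omega)\leqslant C(n)R_\Omega^n$ bound this by $C(n)R_\Omega\delta(\Omega)^{1/(4n)}$. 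For each $i\in I$ choose $x_i\in\partial\Omega_i$ and apply Theorem \ref{arbre} to $\partial\Omega_i$ with anchor $\{x_i\}$, obtaining a connected union of trees $T_i\ni x_i$ with $\h1(T_i)\leqslant C(n)\int_{\partial\Omega_i}|\H|^{n-1}\dhn$ and $d_H(T_i,\partial\Omega_i)\leqslant C(n)R_\Omega\delta(\Omega)^{1/(4n)}$. Setting $T:=T_0\cup\bigcup_{i\in I}T_i$ we get $\h1(T)\leqslant C(n)\int_{\partial\Omega\setminus B'}|\H|^{n-1}\dhn$; moreover the $T_i$ are pairwise disjoint and disjoint from $\partial_r\Omega\supset T_0$, while each tree of $T_0$ meets the connected set $B'$, so $B'\cup T$ has exactly $1+\Card(I)=N+1$ connected components, which is point (3) of Theorem \ref{distanceballmodel} (the case $N=\infty$ being trivial as for Theorem \ref{distancefrontspheremodel}).

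It remains to prove the two Hausdorff inclusions of Theorem \ref{distanceballmodel}. That $B_{x_\Omega}(R_\Omega)\cup T$ lies in a $C(n)R_\Omega\delta(\Omega)^{\beta(n)}$-neighbourhood of $\Omega$ is immediate: the ball by Remark \ref{DomainHausdorff}(2), and the trees because $T_0$ is $C(n)R_\Omega\delta(\Omega)^{1/(4n)}$-close to $\partial_r\Omega\subset\overline\Omega$ and each $T_i$ is $C(n)R_\Omega\delta(\Omega)^{1/(4n)}$-close to $\partial\Omega_i\subset\overline\Omega$. Conversely let $y\in\Omega$. If $y\in B'$ then $\dist(y,B_{x_\Omega}(R_\Omega))\leqslant R_\Omega\delta(\Omega)^{1/4}$. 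If $y\notin B'$, the lemma gives either $\dist(y,B_{x_\Omega}(R_\Omega))\leqslant C(n)R_\Omega\delta(\Omega)^{1/(2(n+1))}$ (and we are done), or $\dist(y,\partial\Omega)\leqslant C(n)R_\Omega\delta(\Omega)^{1/(2(n+1))}$, in which case $y$ is close to some $z\in\partial\Omega$; and $z$, lying either in $B'$ (hence $R_\Omega\delta(\Omega)^{1/4}$-close to $B_{x_\Omega}(R_\Omega)$), or in some $\partial\Omega_i$ (hence $C(n)R_\Omega\delta(\Omega)^{1/(4n)}$-close to $T_i$), or in a straddling component of $\partial_r\Omega$ (hence $C(n)R_\Omega\delta(\Omega)^{1/(4n)}$-close to $A_0\cup T_0\subset\bigl(B_{x_\Omega}(R_\Omega)\bigr)_{R_\Omega\delta(\Omega)^{1/4}}\cup T_0$), is at distance $\leqslant C(n)R_\Omega\delta(\Omega)^{1/(4n)}$ from $B_{x_\Omega}(R_\Omega)\cup T$. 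Since $\tfrac1{2(n+1)}\geqslant\beta(n)$ and $\tfrac1{4n}\geqslant\beta(n)$ (check both branches of $\beta(n)=\min(\tfrac18,\tfrac1{4n})$), this gives $d_H(\Omega,B_{x_\Omega}(R_\Omega)\cup T)\leqslant C(n)R_\Omega\delta(\Omega)^{\beta(n)}$, finishing Theorem \ref{distanceballmodel}.

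Finally I would deduce Theorem \ref{Hausdorffcourbureball}. Replace the piece $B_{x_\Omega}(R_\Omega)\cup T_0$ by $B_{x_\Omega}(R_\Omega)$ (each tree of $T_0$ meets $A_0\subset\bigl(B_{x_\Omega}(R_\Omega)\bigr)_{R_\Omega\delta(\Omega)^{1/4}}$ and has diameter $\leqslant\h1(T_0)$, so the cost is $\leqslant\h1(T_0)+R_\Omega\delta(\Omega)^{1/4}$) and each $T_i$ by $x_i\in T_i$ (cost $\leqslant\diam T_i\leqslant\h1(T_i)$); using $\h1(T_0)+\sum_{i}\h1(T_i)\leqslant C(n)\int_{\partial\Omega\setminus B'}|\H|^{n-1}\dhn$, Hölder's inequality with exponents $\tfrac p{n-1}$ and $\tfrac p{p-n+1}$, Lemma \ref{concentration} (so $\hn(\partial\Omega\setminus A_{\delta(\Omega)^{1/4}})\leqslant C(n)P(\Omega)\delta(\Omega)^{1/4}$) and $P(\Omega)\leqslant C(n)R_\Omega^n$, the total replacement cost is $\leqslant C(n,p)R_\Omega\,\delta(\Omega)^{\frac{p-n+1}{4p}}\bigl(P(\Omega)^{1/n}\|\H\|_p\bigr)^{n-1}$, and added to $C(n)R_\Omega\delta(\Omega)^{\beta(n)}$ this gives the announced estimate; for the cardinality, the Michael--Simon inequality applied to the constant $1$ on each (closed) component $C$ of $\partial\Omega\setminus\partial_r\Omega$ gives $\int_C|\H|^n\dhn\geqslant 1/C(n)$, so summing over these components — which lie in $\partial\Omega\setminus B'\subset\partial\Omega\setminus A_{\delta(\Omega)^{1/4}}$ — and applying Hölder with exponents $\tfrac pn,\tfrac p{p-n}$, Lemma \ref{concentration} and $P(\Omega)\leqslant C(n)R_\Omega^n$ yields $\Card(I)\leqslant C(n,p)P(\Omega)\|\H\|_p^n\delta(\Omega)^{\frac{p-n}{4p}}$ (finite when $p\geqslant n$ and $\H\in L^p$). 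I expect the main obstacle to be the inclusion $\Omega\subset\bigl(B_{x_\Omega}(R_\Omega)\cup T\bigr)_{C(n)R_\Omega\delta(\Omega)^{\beta(n)}}$, i.e.\ ruling out points of the solid domain stuck in the ``gap'' between the model ball and the tentacles; this is exactly what the displayed lemma does, and everything else is a transcription of the spherical argument with $B'$ in place of $A_{\delta(\Omega)^{1/4}}$, the inflated ball conveniently absorbing both the bulk component and the inner bubbles of $\partial\Omega$.
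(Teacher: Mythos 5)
Your proof is correct and takes essentially the same route as the paper: you run the tree construction of Theorem \ref{distancefrontspheremodel} with the inflated ball $B'=B_{x_\Omega}(R_\Omega(1+\delta(\Omega)^{1/4}))$ in place of the annulus $A_{\delta(\Omega)^{1/4}}$, handle the solid--domain points by showing any $y\in\Omega$ is within $C(n)R_\Omega\delta(\Omega)^{1/(2(n+1))}$ of either $\partial\Omega$ or $B_{x_\Omega}(R_\Omega)$ via the measure bound of Remark \ref{DomainHausdorff}, and then collapse trees to derive the theorem exactly as \ref{Hausdorffcourbure} is derived from \ref{distancefrontspheremodel}. The only presentational difference is that you isolate this observation as a standalone lemma, where the paper instead introduces the intermediate radius $R'_\Omega=R_\Omega(1+2C(n)\delta(\Omega)^{1/(2(n+1))})$ and argues inline "as in the proof of Remark \ref{DomainHausdorff}(2)"; the substance is identical.
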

\begin{remark} The norm  $\|\H\|_p$  can  be  replaced  by  $\left(\frac{1}{P(\Omega)}\displaystyle\int_{\partial\Omega\setminus B_{x_\Omega}\bigl(R_\Omega(1+ \delta(\Omega)^\frac{1}{4})\bigr) }|\H|^{p}\dhn\right)^\frac{1}{p}$.
\end{remark}
\begin{proofnewversion}
We set $\partial_r\Omega$ the union of the connected components of $\partial\Omega$ that intercept $ B_{x_\Omega}\bigl(R_\Omega(1+\delta(\Omega)^\frac{1}{4})$ and then we construct $T$ as in the previous section. Arguing as in the previous subsection, we get that the  $C(n)R_\Omega\delta(\Omega)^{\beta(n)}$-tubular neighbourhood of $B_{x_\Omega}(R_\Omega)\cup T$ contains $\partial \Omega\setminus B_{x_\Omega}\bigl(R_\Omega(1+\delta(\Omega)^\frac{1}{4})\bigr)$. We set with $R'_\Omega=R_\Omega(1+2C(n)\delta(\Omega)^\frac{1}{2(n+1)})$ (where $C(n)$ is the constant of Remark \ref{DomainHausdorff} (2)). Then for any $x\in \Omega$,  either we have $x\in B_{x_\Omega}(R'_{\Omega})$ and then $d\bigl(x, B_{x_\Omega}(R_\Omega)\cup T\bigr)\leqslant 2C(n)R_\Omega \delta(\Omega)^\frac{1}{2(n+1)}$, either we have $x\in \Omega\setminus B_{x_\Omega}(R'_{\Omega})$, and then $x\in\Omega\Delta B_{x_\Omega}(R_{\Omega})$. From the Remark \ref{DomainHausdorff} (1), we infer (as in the proof of Remark \ref{DomainHausdorff} (2)) that
$$d(x,\partial\Omega)\leqslant C(n)R_\Omega\delta(\Omega)^{\frac{1}{2(n+1)}}$$
and even more precisely, $d\bigl(x,\partial\Omega\setminus B_{x_\Omega}\bigl(R_\Omega(1+\delta(\Omega)^\frac{1}{4}))\bigr)\leqslant C(n)R_\Omega\delta(\Omega)^{\frac{1}{2(n+1)}}$. We infer that we have $$d\bigl(x,B_{x_\Omega}(R_\Omega)\cup T\bigr)\leqslant C(n)R_\Omega\delta(\Omega)^{\min(\frac{1}{8},\frac{1}{4n})}$$

On the other hand, for any $x\in B_{x_\Omega}(R_\Omega)\cup T$, either $x\in B{x_\Omega}(R_\Omega)$ and then $d(x,\Omega)\leqslant C(n)R_{\Omega}\delta(\Omega)^\frac{1}{2(n+1)}$ by Remark \ref{DomainHausdorff} (2), either $x\in T$ and then $d(x,\Omega)=0$. We then get
$$d_H\bigl(B_{x_\Omega}(R_\Omega)\cup T,\Omega\bigr)\leqslant C(n)R_\Omega\delta(\Omega)^{\min(\frac{1}{8},\frac{1}{4n})}$$
which gives the result as in the proofs of Theorems \ref{distancefrontspheremodel} and \ref{Hausdorffcourbure}.
\end{proofnewversion}
\section{A quasi-isometry result : proof of Theorem \ref{Lipschitz}}\label{Allard}
Let us first remind Duggan's version of Allard's local regularity theorem about hypersurface of suitably bounded mean curvature.

\begin{theorem}[J.P. Duggan \cite{Dug}]\label{All}
If $p > n$ is arbitrary, then there are $\eta=\eta(n,p)$, $\gamma=\gamma(n,p)\in(0,1)$ and $c = c(n,p)$ such that if $M\subset\R^{n+1}$ is a hypersurface, $x\in M$ and $\rho>0$ satisfy the hypotheses 
\begin{enumerate}
\item $\hn(B_x(\rho)\cap M)\leqslant (1+\eta)\rho^n|\B^n|$\\
\item $\rho^{p-n}\displaystyle\int_{B_x(\rho)\cap M}|\H|^p\dhn\leqslant\eta^p$
\end{enumerate} 
then there exists a linear isometry $q$ of $\R^{n+1}$ and  $u \in W^{2,p}(B_0^{\R^n}(\gamma\rho))$ with $u(0)=0$, $M\cap B_x(\gamma\rho)=\bigl(x+q(graph\ u)\bigr)\cap B_x(\gamma\rho)$ and
\begin{equation}\label{all}
\frac{\sup|u|}{\rho}+\sup |du|+\rho^{1-\frac{n}{p}}\left(\int_{B_0^{\R^n}(\gamma\rho)}|\nabla du|^p\dhn\right)^{1/p}\leqslant c\eta^\frac{1}{4n}.
\end{equation}
\end{theorem}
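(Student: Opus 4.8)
Since Theorem~\ref{All} is the codimension-one Allard regularity theorem with the generalized mean curvature controlled in $L^p$ for $p>n$ (and is attributed here to Duggan \cite{Dug}), I will only describe the route one follows to prove it, along the lines of Allard's original blow-up scheme specialised to hypersurfaces. The plan is to run the excess-decay machinery. First I would invoke the \emph{monotonicity formula}: for a hypersurface $M$ with generalized mean curvature $\H\in L^p_{\mathrm{loc}}$, the density ratio $r\mapsto\hn(B_x(r)\cap M)/(|\B^n|r^n)$ becomes, after multiplication by the factor $\exp\bigl(C\rho^{1-n/p}(\int_{B_x(\rho)\cap M}|\H|^p\dhn)^{1/p}\bigr)$ that equals $1+O(\eta)$ under hypothesis (2), almost nondecreasing on $(0,\rho]$; combined with hypothesis (1) this propagates the bound $\hn(B_x(r)\cap M)\leqslant(1+C\eta)|\B^n|r^n$ down to every scale $r\leqslant\rho$. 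Next I would introduce, for an affine hyperplane $\pi$, the \emph{tilt-excess} $E(x,r,\pi)=r^{-n}\int_{B_x(r)\cap M}|p_{T_yM}-p_\pi|^2\dhn(y)$ (with $p$ denoting orthogonal projections), put $E(x,r)=\inf_\pi E(x,r,\pi)$, and consider the associated height-excess of $M$ over the optimal plane; testing the first-variation identity of $M$ against a cutoff times the height function gives a Caccioppoli-type inequality bounding $E$ by the rescaled height-excess, and conversely by elementary geometry, with a remainder of size $r^{2-2n/p}(\int_{B_x(r)\cap M}|\H|^p\dhn)^{2/p}$, which is a \emph{positive} power of $r$ precisely because $p>n$.

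The heart of the argument is an \emph{excess-improvement lemma}: there is a fixed small $\theta\in(0,1)$ such that if $E(x,r)$ and the scale-invariant curvature quantity are small enough, then
\[
E(x,\theta r)\leqslant C\theta^{2}E(x,r)+C\theta^{-n}r^{2-2n/p}\Bigl(\int_{B_x(r)\cap M}|\H|^p\dhn\Bigr)^{2/p}.
\]
I would prove it in four steps: (i) a \emph{Lipschitz approximation}, writing $M\cap B_x(r)$ as the graph of a Lipschitz function $u$ with small Lipschitz constant off a set of measure $\lesssim r^{n}E(x,r)$; (ii) showing, via the first-variation identity tested against vector fields tangent to $M$, that $u$ is a weak solution of $\Delta u=(\text{error})$ whose error is controlled by $E$ and by the mean curvature; (iii) comparing $u$ with its harmonic replacement $h$ and using the decay $\int_{B_{\theta r}}|Dh-(Dh)_{x}|^{2}\leqslant C\theta^{n+2}\int_{B_r}|Dh-(Dh)_{x}|^{2}$ for harmonic functions; (iv) transferring the decay back to $M$ through the two-sided excess/height comparison. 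Iterating this lemma geometrically — the curvature remainder sums because it carries the positive power $r^{2-2n/p}$, and it is this exponent, being $<2$, that governs the rate — yields, via the Campanato/Morrey characterisation, that near $x$ the set $M$ is the graph over $B_0^{\R^n}(\gamma\rho)$ of a function $u\in C^{1,1-n/p}$ with $u(0)=0$; the explicit exponent $1/(4n)$ in \eqref{all} is then produced by bookkeeping the power losses incurred in step~(i) and in turning hypothesis (1) into an initial bound on the excess.

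Finally, to upgrade the graph to $W^{2,p}$ I would use that a graph of prescribed mean curvature satisfies the quasilinear elliptic equation $\div\bigl(Du/\sqrt{1+|Du|^2}\bigr)=\H\sqrt{1+|Du|^{2}}$; with $Du$ now bounded and H\"older continuous and $\H\in L^{p}$, Calder\'on--Zygmund $L^p$ elliptic estimates give $u\in W^{2,p}(B_0^{\R^n}(\gamma\rho))$ with the stated control on $\|\nabla du\|_p$. I expect the main obstacle to be the excess-improvement lemma, and within it the graphical (Lipschitz) approximation off a small set together with the quantitative harmonic approximation: one must keep precise track of how the mean-curvature term enters so that its contribution scales with a \emph{positive} power of the radius — which is exactly where the hypothesis $p>n$ is used and where the final H\"older exponent $1-n/p$ originates.
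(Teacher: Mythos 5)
The paper offers no proof of Theorem \ref{All}: it is quoted verbatim as Duggan's $W^{2,p}$ refinement of Allard's regularity theorem and used as a black box, so the only ``paper proof'' is the citation \cite{Dug}. Your outline — almost-monotonicity of the density ratio under the $L^p$ curvature hypothesis, tilt-excess/height-excess comparison via the first variation, Lipschitz and harmonic approximation leading to an excess-decay lemma whose curvature remainder carries the positive power $r^{2-2n/p}$ (this is where $p>n$ enters), Campanato iteration to a $C^{1,1-n/p}$ graph, and finally Calder\'on--Zygmund estimates for the prescribed-mean-curvature equation to obtain the quantitative $W^{2,p}$ bound — is exactly the standard Allard/Duggan route and is consistent with the cited source; it is of course a strategy sketch rather than a complete argument (in particular the specific exponent $\eta^{\frac{1}{4n}}$ in \eqref{all} would only emerge from carrying out the bookkeeping in detail, as in \cite{Dug}), but nothing in it is wrong, and no more is needed here since the paper itself treats the theorem as an external result.
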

So  the Morrey-Campanato  says that for any $v\in W^{1,p}(B_0^{\R^n}(1))$ we have 
$$\sup_{x\neq y\in B_0^{\R^n}(1)}\frac{|v_x-v_y|}{|x-y|^{1-\frac{n}{p}}}\leqslant C(n,p)\left(\int_{B_0^{\R^n}(1)}|v|^p\dhn+\int_{B_0^{\R^n}(1)}|dv|^p\dhn\right)$$
Up to a normalization and under the assumptions of Theorem \ref{All}, the Morrey-Campanato theorem gives us that 
$$\rho^{1-\frac{n}{p}}\sup_{x\neq y\in B_0^{\R^n}(\gamma\rho)}\frac{||du_x|-|du_y||}{|x-y|^{1-\frac{n}{p}}}\leqslant C(n,p)\eta^\frac{1}{4n}$$
Now let $\Phi : B_0^{\R^n}(\gamma\rho)\rightarrow\R^{n+1}\ ,\ a\mapsto q(a,u(a))$. Then $d\Phi_a(h)=q(h,du_a(h))$. Since $q$ is an isometry,  a unit normal is given by $\nu_{\Phi(a)}=q\bigl(\frac{((\nabla u)\mid_a,-1)}{\sqrt{1+|\nabla u\mid_a|^2}}\bigr)$ which gives for any $x\in\partial\Omega$
\begin{equation}\label{Holdernu}
\rho^{1-\frac{n}{p}}\sup_{y,z\in B_x(\gamma\rho)\cap\partial\Omega,\ y\neq z}\frac{\bigl|\nu_y-\nu_z\bigr|}{|y-z|^{1-\frac{n}{p}}}\leqslant C(n,p)\eta^\frac{1}{4n}
\end{equation}

\begin{lemma}\label{Lipprep}
Let $p>n$. There exist $3$ positive constants $C_1(n,p)$, $C_2(n,p)$ and $C_3(n,p)$ such that for any domain $\Omega$ with smooth boundary $\partial\Omega$ satisfying $P(\Omega)\normlp{\H}{p}^n\leqslant K$, and $\delta(\Omega)\leqslant\frac{1}{C_1(n,p)K^{\alpha(n,p)}}$, we have
\begin{equation}\label{ineq15}
\sup_{x\in\partial\Omega}\bigl||x-x_{\Omega}|-R_\Omega\bigr|\leqslant C_2(n,p)R_\Omega\delta(\Omega)^{\beta(n)},
\end{equation}
and the assumptions of Theorem \ref{All} are satisfied for $\bar{\rho}=\frac{R_\Omega}{C_3(n,p)K^\frac{p}{n(p-n)}}$. Moreover we have
\begin{equation}\label{ineq13}
\bar{\rho}^{1-\frac{n}{p}}\sup_{y,z\in B_x(\gamma\bar{\rho})\cap\partial\Omega,\ y\neq z}\frac{\bigl|Z_y-Z_z\bigr|}{|y-z|^{1-\frac{n}{p}}}\leqslant C(n,p)\eta^\frac{1}{4n}
\end{equation}
Where $Z_x=\frac{x-x_{\Omega}}{|x-x_{\Omega}|}-\nu_x$. Here we have set $\alpha(n,p)=\frac{8p}{p-n}$.
\end{lemma}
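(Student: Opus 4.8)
The plan is to first confine $\partial\Omega$ to a thin annulus around $S_{x_\Omega}(R_\Omega)$, and then apply Duggan's local regularity theorem (Theorem \ref{All}) at every boundary point, at a scale $\bar\rho$ forced by the hypothesis $P(\Omega)\normlp{\H}{p}^n\leqslant K$, using Theorem \ref{equidense} to verify the density hypothesis (1) of Theorem \ref{All} and the smallness of $\delta(\Omega)$ to kill all error terms. First I would prove \eqref{ineq15}. The cardinality bound \eqref{cardinal} of Theorem \ref{Hausdorffcourbure} reads $\Card(I)\leqslant C(n,p)P(\Omega)\normlp{\H}{p}^n\delta(\Omega)^{\frac{p-n}{4p}}\leqslant C(n,p)K\delta(\Omega)^{\frac{p-n}{4p}}$; being a non-negative integer, it vanishes as soon as $\delta(\Omega)\leqslant(C(n,p)K)^{-\frac{4p}{p-n}}$. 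With $I=\emptyset$, Theorem \ref{Hausdorffcourbure} and $(P(\Omega)^{1/n}\normlp{\H}{p})^{n-1}\leqslant K^{\frac{n-1}{n}}$ give $d_H\bigl(\partial\Omega,S_{x_\Omega}(R_\Omega)\bigr)\leqslant C(n,p)R_\Omega\bigl[\delta(\Omega)^{\beta(n)}+\delta(\Omega)^{\frac{p-n+1}{4p}}K^{\frac{n-1}{n}}\bigr]$; since $\beta(n)<\frac{p-n+1}{4p}$ for all $p>n$ (checked separately for $n=1$ and $n\geqslant2$), the second term is absorbed once $\delta(\Omega)$ is small in terms of $K$, and \eqref{ineq15} follows from $\dist\bigl(x,S_{x_\Omega}(R_\Omega)\bigr)=\bigl||x-x_\Omega|-R_\Omega\bigr|$.

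Next I would record that $K\geqslant c(n)>0$ (Michael--Simon gives $\int_{\partial\Omega}|\H|^n\dhn\geqslant c(n)$, and $\normlp{\H}{p}\geqslant\normlp{\H}{n}$ for $p\geqslant n$), and set $\bar\rho=R_\Omega/\bigl(C_3(n,p)K^{\frac{p}{n(p-n)}}\bigr)$; the exponent is dictated by hypothesis (2) of Theorem \ref{All}. Indeed $\int_{B_x(\bar\rho)\cap\partial\Omega}|\H|^p\dhn\leqslant P(\Omega)\normlp{\H}{p}^p\leqslant P(\Omega)^{1-\frac pn}K^{\frac pn}$ and $P(\Omega)=(1+\delta(\Omega))\Vol(\S^n)R_\Omega^n\geqslant\Vol(\S^n)R_\Omega^n$ (with $1-\frac pn<0$), so $\bar\rho^{p-n}\int_{B_x(\bar\rho)\cap\partial\Omega}|\H|^p\dhn\leqslant C(n,p)K^{\frac pn}(\bar\rho/R_\Omega)^{p-n}=C(n,p)C_3^{-(p-n)}\leqslant\eta^p$ for $C_3(n,p)$ large. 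For hypothesis (1), let $x'\in S_{x_\Omega}(R_\Omega)$ be nearest to $x\in\partial\Omega$, so $|x-x'|\leqslant C_2R_\Omega\delta(\Omega)^{\beta(n)}$ by \eqref{ineq15}; put $\rho'=\bar\rho+|x-x'|$ and apply Theorem \ref{equidense} at $(x',\rho')$. For $\delta(\Omega)$ small in terms of $K$ one has $\rho'\in[C(n)\delta(\Omega)^{1/8}R_\Omega,R_\Omega]$, the lower bound requiring $\bar\rho\gtrsim\delta(\Omega)^{1/8}R_\Omega$, i.e. $\delta(\Omega)\lesssim 1/(C_3K^{\frac{p}{n(p-n)}})^8$ --- the most restrictive smallness condition, and the source of the exponent $\alpha(n,p)=\frac{8p}{p-n}$. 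Using $\hn(\partial\Omega)=P(\Omega)=(1+\delta(\Omega))\Vol(\S^n)R_\Omega^n$ and the Bishop comparison $\hn\bigl(B_{x'}(\rho')\cap S_{x_\Omega}(R_\Omega)\bigr)\leqslant(\rho')^n\Vol(\B^n)\bigl(1+C_n(\rho'/R_\Omega)^2\bigr)$, the quasi-equality of Theorem \ref{equidense} then gives $\hn\bigl(B_x(\bar\rho)\cap\partial\Omega\bigr)\leqslant\hn\bigl(\partial\Omega\cap B_{x'}(\rho')\bigr)\leqslant(1+\eta)\bar\rho^n\Vol(\B^n)$, every error term --- of shape $(\rho'/R_\Omega)^2$, $\delta(\Omega)$, $(\rho'/\bar\rho)^n-1$ or $\delta(\Omega)^{1/4}(R_\Omega/\bar\rho)^n$ --- being $\leqslant\eta$ once $\delta(\Omega)$ is small in terms of $K$.

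It remains to deduce \eqref{ineq13}. Theorem \ref{All} applies at every $x\in\partial\Omega$ at scale $\bar\rho$, and --- exactly as in the derivation of \eqref{Holdernu} preceding the statement --- Morrey--Campanato gives $\bar\rho^{1-\frac np}\sup_{y\neq z\in B_x(\gamma\bar\rho)\cap\partial\Omega}\frac{|\nu_y-\nu_z|}{|y-z|^{1-\frac np}}\leqslant C(n,p)\eta^{\frac1{4n}}$. Writing $Z_y-Z_z=\bigl(\tfrac{y-x_\Omega}{|y-x_\Omega|}-\tfrac{z-x_\Omega}{|z-x_\Omega|}\bigr)-(\nu_y-\nu_z)$, it suffices to control the radial term: by \eqref{ineq15}, $|x-x_\Omega|\geqslant R_\Omega/2$ on $\partial\Omega$ for $\delta(\Omega)$ small, so $x\mapsto(x-x_\Omega)/|x-x_\Omega|$ is $(C/R_\Omega)$-Lipschitz there; since $|y-z|\leqslant 2\bar\rho$ one has $|y-z|\leqslant(2\bar\rho)^{\frac np}|y-z|^{1-\frac np}$, whence $\bar\rho^{1-\frac np}\cdot\frac{C}{R_\Omega}\cdot(2\bar\rho)^{\frac np}=\frac{2^{n/p}C\bar\rho}{R_\Omega}=\frac{C(n,p)}{C_3K^{\frac{p}{n(p-n)}}}\leqslant C(n,p)\eta^{\frac1{4n}}$ after enlarging $C_3$; summing the two estimates gives \eqref{ineq13}. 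Finally, every smallness condition on $\delta(\Omega)$ used above has the form $\delta(\Omega)\leqslant 1/(C(n,p)K^\gamma)$ with $\gamma\leqslant\frac{8p}{p-n}=\alpha(n,p)$, so enlarging $C_1(n,p)$ makes the single hypothesis $\delta(\Omega)\leqslant 1/(C_1(n,p)K^{\alpha(n,p)})$ imply them all.

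The step I expect to be the main obstacle is pinning down the scale $\bar\rho$: it is rigidly determined, up to the constant $C_3$, by forcing the rescaled curvature energy $\bar\rho^{p-n}\int_{B_x(\bar\rho)}|\H|^p\dhn$ below $\eta^p$, and at this $K$-dependent scale one must simultaneously guarantee $I=\emptyset$ and keep the density quasi-equality of Theorem \ref{equidense} usable --- its admissible radii $[C(n)\delta(\Omega)^{1/8}R_\Omega,R_\Omega]$ being exactly what dictate the exponent $\alpha(n,p)=\frac{8p}{p-n}$. The remaining work is careful tracking of how $K$ threads through the various smallness thresholds, on top of invoking Theorems \ref{Hausdorffcourbure}, \ref{equidense} and \ref{All}.
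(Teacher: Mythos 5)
Your proof follows essentially the same route as the paper's: derive $I=\emptyset$ from the cardinality bound in Theorem \ref{Hausdorffcourbure} to obtain \eqref{ineq15}, fix $\bar\rho$ by forcing the rescaled curvature energy below $\eta^p$ via $P(\Omega)\|\H\|_p^n\leq K$ and the isoperimetric lower bound $P(\Omega)\geq c(n)R_\Omega^n$, verify the density hypothesis of Theorem \ref{All} via Theorem \ref{equidense} (this is what pins $\alpha(n,p)=\frac{8p}{p-n}$, since the admissible radii are $[C(n)\delta^{1/8}R_\Omega,R_\Omega]$), and control the radial part of $Z_y-Z_z$ by the $C/R_\Omega$-Lipschitz behaviour of $x\mapsto(x-x_\Omega)/|x-x_\Omega|$ once $|x-x_\Omega|\geq R_\Omega/2$. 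The one place you are a bit more explicit than the paper is in verifying hypothesis (1) of Theorem \ref{All}: you pass from the boundary point $x\in\partial\Omega$ to its nearest point $x'\in S_{x_\Omega}(R_\Omega)$ and enlarge the radius to $\rho'=\bar\rho+|x-x'|$ before invoking Theorem \ref{equidense}, whereas the paper states the density bound directly for $x\in S_{x_\Omega}(R_\Omega)$ (the transfer to $\partial\Omega$ is implicit there, and made explicit only in the proof of Lemma \ref{Lipprep2}); your version is a cleaner way to close that step but does not change the method.
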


\begin{proof} Since the computations are a bit messy, we organize them in several steps:
\begin{enumerate}[leftmargin=*,parsep=0cm,itemsep=0cm,topsep=0cm]
\item For what concern the point (2) of  Theorem \ref{All}, we have for any $\rho>0$
\begin{align*}
\rho^{p-n}\int_{B_x(\rho)\cap\partial\Omega}|\H|^p\dhn&\leqslant \rho^{p-n}P(\Omega)\|\H\|_p^p\leqslant \left(\frac{\rho}{P(\Omega)^\frac{1}{n}}\right)^{p-n}K^{\frac{p}{n}}
\end{align*}
From \eqref{inegisop} and the definition of $R_{\Omega}$, we have $R_{\Omega}^n\leqslant C(n) P(\Omega)$ and so
\begin{align*}
\rho^{p-n}\int_{B_x(\rho)\cap\partial\Omega}|\H|^p\dhn\leqslant C(n,p)\left(\frac{\rho}{R_\Omega}\right)^{p-n}K^{\frac{p}{n}}
\end{align*}
From this we deduce that there exists a constant $C_3(n,p)$ large enough such that $\partial\Omega$ satisfies assumption (2) of Theorem \ref{All} for $\rho=\bar{\rho}=\frac{R_\Omega}{C_3(n,p)K^\frac{p}{n(p-n)}}$.

\item Let $x\in\S^n$ then there exists a $r(n,p)\in]0,1]$ such that we have $\frac{\hn(B_x(r)\cap\S^n)}{|\B^n|r^n}\in[1/2,1+\eta(n,p)/4]$, for any $r<r(n,p)$, where $\eta(n,p)$ is the constant of Theorem \ref{All}. By Michael-Simon Sobolev inequality, we have $K\geqslant P(\Omega)\|\H\|_p^n\geqslant k(n)$, and so we can assume $C_3(n,p)$ large enough to have $\bar{\rho}/R_{\Omega}\leqslant r(n,p)\leqslant 1$.

From now on $C_3(n,p)$ is fixed so that it satisfies both the two previous conditions.

\item  Since $K\geqslant k(n)$, we can assume $C_1(n,p)$ large enough for $\delta(\Omega)\leqslant\frac{1}{C_1(n,p)K^{\alpha(n,p)}}$ to imply that $\delta(\Omega)\leqslant\min\bigl(\frac{\eta}{\eta+4},(\frac{|\B^n|(C(n))^{n-1}\eta}{8\hn(\S^n)})^8\bigr)\leqslant 1$ in what follows, where $C(n)$ is the constant of Theorem \ref{equidense}.

\item From Theorem \ref{Hausdorffcourbure},  the number $N$ of connected components of $\partial\Omega$ that do not intercept $A_{\delta(\Omega)^{1/4}}$ satisfies
$$N\leqslant C(n,p)P(\Omega)\|\H\|_p^n\delta(\Omega)^\frac{p-n}{4p}\leqslant C(n,p)K\delta(\Omega)^{\frac{p-n}{4p}}\leqslant C(n,p)K\delta(\Omega)^\frac{1}{\alpha(n,p)}$$
So, when $\delta(\Omega)\leqslant\frac{1}{(2C(n,p)K)^{\alpha(n,p)}}$, we have $N=0$. We infer by Theorem \ref{Hausdorffcourbure}
\begin{align*}d_H\bigl(\partial\Omega,S_{x_\Omega}(R_\Omega)\bigr)&\leqslant C(n,p)R_\Omega\left[\delta(\Omega)^{\beta(n)}+\delta(\Omega)^{\frac{p-n+1}{4p}}(P(\Omega)^{\frac{1}{n}}\|\H\|_p)^{n-1}\right]\\
&\leqslant C(n,p)R_\Omega\left(1+\delta(\Omega)^{\left(\frac{n-1}{n}\right)\left(\frac{p-n}{4p}\right)}K^{\frac{n-1}{n}}\right)\delta(\Omega)^{\beta(n)}\\
&\leqslant C(n,p)R_\Omega\Bigl(1+\bigl(\delta(\Omega)^\frac{1}{\alpha(n,p)}K\bigr)^{\frac{n-1}{n}}\Bigr)\delta(\Omega)^{\beta(n)}\\
&\leqslant C_2(n,p)R_\Omega\delta(\Omega)^{\beta(n)}
\end{align*}
which gives inequality \eqref{ineq15} for any $C_1(n,p)\geqslant (2C(n,p))^{\alpha(n,p)}$ such that the previous condition (3) also holds. Note that we have used $\delta(\Omega)\leqslant 1$. At this stage, $C_2(n,p)$ is fixed, and does not depends on $C_1(n,p)$.

\item Similarly for $C_1(n,p)$ large enough and $\delta(\Omega)\leqslant\frac{1}{C_1K^{\alpha}}$, we have from the previous point that
\begin{equation}\label{minorx}|x|\geqslant R_\Omega(1-C_2\delta(\Omega)^{\beta})\geqslant R_\Omega\left(1-\frac{C_2}{C_1^{\beta}K^{\beta\alpha}}\right)\geqslant R_\Omega\left(1-\frac{C_2}{C_1^{\beta}k(n)^{\beta\alpha}}\right)\geqslant\frac{1}{2}R_{\Omega}
\end{equation}
From this we deduce that 
$$\frac{\left|\frac{x-x_{\Omega}}{|x-x_{\Omega}|}-\frac{y-x_{\Omega}}{|y-x_{\Omega}|}\right|}{|x-y|^{1-\frac{n}{p}}}\leqslant\frac{4}{R_{\Omega}}|x-y|^{\frac{n}{p}}\leqslant C(n,p) R_{\Omega}^{\frac{n}{p}-1}\leqslant \frac{1}{\bar{\rho}^{1-\frac{n}{p}}C_3^{\frac{p-n}{p}}K^{1/n}}\leqslant \frac{C(n,p)}{\bar{\rho}^{1-\frac{n}{p}}}$$
which gives with \ref{Holdernu} the inequality \ref{ineq13}.
\item We want to apply Theorem \ref{equidense} to $\partial\Omega$ and $B_x(\bar{\rho})$ and so need $\bar{\rho}\in[C(n)\delta(\Omega)^{1/8}R_{\Omega},R_{\Omega}]$. Note that $\bar{\rho}\leqslant R_\Omega$ was already obtained in (2). On the other hand,  we have that
\begin{align*}
\frac{\bar{\rho}}{R_\Omega}=\frac{C(n)\delta(\Omega)^\frac{1}{8n}}{C(n)\delta(\Omega)^\frac{1}{8n}C_3(n,p)K^\frac{p}{n(p-n)}}&=\frac{C(n)\delta(\Omega)^\frac{1}{8n}C_1(n,p)^\frac{1}{8n}}{C(n)C_3(n,p)[\delta(\Omega)C_1(n,p)K^{\alpha(n,p)}]^\frac{1}{8n}}\\
&\geqslant C(n)\delta(\Omega)^\frac{1}{8}\frac{C_1(n,p)^\frac{1}{8}}{C(n)C_3(n,p)}.
\end{align*}
 Now it is clear that for $C_1(n,p)$ large enough, we have $\frac{\bar{\rho}}{R_\Omega}\geqslant C(n)\delta(\Omega)^\frac{1}{8n}\geqslant C(n)\delta(\Omega)^\frac{1}{8}$.

\item  Now we prove that for $C_1(n,p)$ large enough, $\partial\Omega$ satisfies (1) for $\bar{\rho}=\frac{R_\Omega}{C_3(n,p)K^\frac{p}{n(p-n)}}$ with $C_3(n,p)$ fixed in (2).
Let $x\in S_{x_{\Omega}}(R_{\Omega})$. Then Theorem \ref{equidense} gives us
\begin{align*}\frac{\hn\bigl(B_x(\bar{\rho})\cap S_{x_\Omega}(R_\Omega)\bigr)}{P(\Omega)}&\leqslant C(n)\delta(\Omega)^{1/4}+\frac{\hn(B_x(\bar{\rho})\cap S_{x_{\Omega}}(R_{\Omega})}{R_{\Omega}^n\hn(\S^n)}\\
&\leqslant C(n)\delta(\Omega)^{1/4}+\frac{\hn\Bigl(B_{x'}\bigl(\frac{\bar{\rho}}{R_{\Omega}}\bigr)\cap S_{x_{\Omega}}(1)\Bigr)}{\hn(\S^n)}
\end{align*}
where $x'=x_{\Omega}+\frac{1}{R_{\Omega}}(x-x_{\Omega})$. Now by the condition (2) above, we have $$\hn\Bigl(B_{x'}\bigl(\frac{\bar{\rho}}{R_{\Omega}}\bigr)\cap S_{x_{\Omega}}(1)\Bigr)\leqslant(1+\eta/4)|\B^n|\frac{\bar{\rho}^n}{R_\Omega^n}$$ and so
\begin{align*}
\hn\bigl(B_x(\bar{\rho})\cap S_{x_\Omega}(R_\Omega)\bigr)&\leqslant C(n) P(\Omega)\delta(\Omega)^{1/4}+(1+\eta/4)\frac{P(\Omega)|\B^n|\bar{\rho}^n}{\hn(\S^n)R_{\Omega}^n}\notag\\
&\leqslant\Bigl(\frac{2C(n)\hn(\S^n)R_{\Omega}^n}{|\B^n|\bar{\rho}^n}\delta(\Omega)^{1/4}+(1+\eta/4)\bigl(1+\delta(\Omega)\bigr)\Bigr)|\B^n|\bar{\rho}^n\\
&\leqslant\Bigl(\frac{2\hn(\S^n)}{(C(n))^{n-1}|\B^n|}\delta(\Omega)^{1/8}+(1+\eta/4)\bigl(1+\delta(\Omega)\bigr)\Bigr)|\B^n|\bar{\rho}^n\end{align*}
where we have used the fact that $\frac{P(\Omega)}{R_{\Omega}^n\hn(\S^n)}=1+\delta(\Omega)\leqslant 2$, and $\dfrac{R_{\Omega}^n}{\bar{\rho}^n}\leqslant\dfrac{1}{C(n)^n\delta(\Omega)^{1/8}}$ proved in (5). Now from the condition $\delta(\Omega)\leqslant\min\bigl(\frac{\eta}{\eta+4},(\frac{|\B^n|(C(n))^{n-1}\eta}{8\hn(\S^n)})^8\bigr)$ we deduce that
$$\hn\bigl(B_x(\bar{\rho})\cap S_{x_\Omega}(R_\Omega)\bigr)\leqslant\left(\eta/4+(1+\eta/4)(1+\frac{\eta}{\eta+4}\right)|\B^n|\bar{\rho}^n\leqslant(1+\eta)|\B^n|\bar{\rho}^n$$

\end{enumerate} 
\end{proof}

Now, using Duggan's regularity theorem, we can show a Calderon-Zygmund property of almost isoperimetric manifolds with $L^p$ bounded mean curvature:

\begin{lemma}\label{SFBound}
Let $p>n$. There exists $C(n,p)>0$ such that for any domain $\Omega$ with smooth boundary $\partial\Omega$ satisfying $P(\Omega)\normlp{\H}{p}^n\leqslant K$ and $\delta(\Omega)\leqslant\frac{1}{C(n,p)K^{\alpha(n,p)}}$ we have
$$P(\Omega)\|\Bf\|_p^n\leqslant  C(n,p)K^\frac{p+1}{p-n}$$
\end{lemma}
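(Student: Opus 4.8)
The plan is to run Duggan's regularity theorem (Theorem~\ref{All}) at the single scale $\bar{\rho}=\frac{R_\Omega}{C_3(n,p)K^{\frac{p}{n(p-n)}}}$ provided by Lemma~\ref{Lipprep}, turn its conclusion into an $L^p$ bound for the second fundamental form on each ball $B_x(\gamma\bar{\rho})$, $x\in\partial\Omega$, and then sum over a Vitali cover of $\partial\Omega$.

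First, by Lemma~\ref{Lipprep} the hypotheses (1)--(2) of Theorem~\ref{All} hold at $(x,\bar{\rho})$ for every $x\in\partial\Omega$, so one gets a linear isometry $q$ of $\R^{n+1}$ and $u\in W^{2,p}\bigl(B_0^{\R^n}(\gamma\bar{\rho})\bigr)$ with $\partial\Omega\cap B_x(\gamma\bar{\rho})=\bigl(x+q(\mathrm{graph}\,u)\bigr)\cap B_x(\gamma\bar{\rho})$, $\sup|du|\leqslant c\eta^{\frac1{4n}}$, and $\bigl(\int_{B_0^{\R^n}(\gamma\bar{\rho})}|\nabla du|^p\,\dhn\bigr)^{1/p}\leqslant c\eta^{\frac1{4n}}\bar{\rho}^{\frac np-1}$ by \eqref{all}. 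On the graph of $u$ the induced metric is $g=\mathrm{Id}+du\otimes du$, so $\|g^{-1}\|_{\mathrm{op}}\leqslant 1$, the area element $\sqrt{\det g}=\sqrt{1+|\nabla u|^2}$ is bounded by a constant, and the second fundamental form satisfies the pointwise bound $|\Bf|\leqslant C(n)|\nabla du|$; since $|\Bf|$ is invariant under Euclidean isometries this gives
$$\int_{\partial\Omega\cap B_x(\gamma\bar{\rho})}|\Bf|^p\,\dhn\leqslant C(n,p)\int_{B_0^{\R^n}(\gamma\bar{\rho})}|\nabla du|^p\,\dhn\leqslant C(n,p)\,\bar{\rho}^{\,n-p}.$$

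Next, apply Vitali's lemma to $\{B_x(\gamma\bar{\rho}/5)\}_{x\in\partial\Omega}$ to obtain $x_1,\dots,x_N\in\partial\Omega$ with the $B_{x_i}(\gamma\bar{\rho}/5)$ pairwise disjoint and $\partial\Omega\subset\bigcup_{i=1}^N B_{x_i}(\gamma\bar{\rho})$. From Theorem~\ref{equidense} (valid for points of $S_{x_\Omega}(R_\Omega)$, hence for points of $\partial\Omega$ via \eqref{ineq15}), using $\bar{\rho}/R_\Omega\geqslant C(n)\delta(\Omega)^{1/8}$ and $\delta(\Omega)$ small enough, one gets the uniform lower bound $\hn\bigl(\partial\Omega\cap B_{x_i}(\gamma\bar{\rho}/5)\bigr)\geqslant c'(n,p)\,\bar{\rho}^{\,n}$; summing over the disjoint family yields $N\,\bar{\rho}^{\,n}\leqslant C(n,p)\,P(\Omega)$. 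Since $\chi_{\partial\Omega}\leqslant\sum_i\chi_{\partial\Omega\cap B_{x_i}(\gamma\bar{\rho})}$, the display above gives
$$\int_{\partial\Omega}|\Bf|^p\,\dhn\leqslant\sum_{i=1}^N\int_{\partial\Omega\cap B_{x_i}(\gamma\bar{\rho})}|\Bf|^p\,\dhn\leqslant C(n,p)\,N\,\bar{\rho}^{\,n-p}\leqslant C(n,p)\,P(\Omega)\,\bar{\rho}^{-p},$$
hence $\|\Bf\|_p\leqslant C(n,p)\,\bar{\rho}^{-1}$. Finally, $P(\Omega)\leqslant C(n)R_\Omega^n$ by \eqref{inegisop} and $\delta(\Omega)\leqslant 1$, so
$$P(\Omega)\|\Bf\|_p^n\leqslant C(n)R_\Omega^n\,C(n,p)\,\bar{\rho}^{-n}=C(n,p)\,K^{\frac{p}{p-n}}\leqslant C(n,p)\,K^{\frac{p+1}{p-n}},$$
the last inequality because $K\geqslant P(\Omega)\|\H\|_p^n\geqslant k(n)$ by the Michael--Simon inequality.

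The real content is packaged in Lemma~\ref{Lipprep}: everything hinges on being able to verify the Allard hypotheses \emph{uniformly over $\partial\Omega$} at one scale $\bar{\rho}$ that is $R_\Omega$ up to a controlled power of $K$. Given that, what remains is the standard Calder\'on--Zygmund patching above; the two places that require care are the pointwise inequality $|\Bf|\leqslant C(n)|\nabla du|$ for the graph of $u$ and the uniform area lower bound for the balls $B_{x_i}(\gamma\bar{\rho}/5)$, which is what forces $\delta(\Omega)$ to be small depending on $K$ (and accounts for the $K$-dependence of the threshold $\frac{1}{C(n,p)K^{\alpha(n,p)}}$).
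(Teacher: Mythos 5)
Your proof is correct and follows the same overall scheme as the paper's (verify Duggan at the single scale $\bar\rho$ from Lemma~\ref{Lipprep}, bound $\int|\Bf|^p$ on each ball $B_{x_i}(\gamma\bar\rho)$, then patch). The one genuine difference is the counting step: the paper bounds the number $N$ of balls in the cover by a crude $(n+1)$-dimensional packing argument inside $B_{x_\Omega}(3R_\Omega)$, giving $N\leqslant C(n,p)(R_\Omega/\bar\rho)^{n+1}\leqslant C(n,p)K^{\frac{(n+1)p}{n(p-n)}}$, which is why its final exponent is $\frac{p+1}{p-n}$. You instead invoke the uniform area lower bound $\hn(\partial\Omega\cap B_{x_i}(\gamma\bar\rho/5))\geqslant c'\bar\rho^n$ coming from Theorem~\ref{equidense} (transported from $S_{x_\Omega}(R_\Omega)$ to $\partial\Omega$ via \eqref{ineq15}) to get the sharper $N\,\bar\rho^n\leqslant C(n,p)P(\Omega)$, so that $\|\Bf\|_p\leqslant C(n,p)\bar\rho^{-1}$ and hence $P(\Omega)\|\Bf\|_p^n\leqslant C(n,p)K^{\frac{p}{p-n}}$. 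This is exactly the improvement the paper anticipates in the remark immediately following the lemma (and in the parenthetical ``using the fact that $\partial\Omega$ is Hausdorff close to $S_{x_\Omega}(R_\Omega)$ we could replace $K^{\frac{(n+1)p}{n(p-n)}}$ by the better $K^{\frac{p}{p-n}}$''); since $K^{\frac{p}{p-n}}\leqslant C(n,p)K^{\frac{p+1}{p-n}}$ because $K\geqslant k(n)$, your conclusion implies the stated one. The one small bookkeeping point worth making explicit is that applying Theorem~\ref{equidense} at $x_i'\in S_{x_\Omega}(R_\Omega)$ requires $\gamma\bar\rho/10\geqslant C(n)\delta(\Omega)^{1/8}R_\Omega$ and $|x_i-x_i'|\leqslant C_2R_\Omega\delta(\Omega)^{\beta(n)}\ll\gamma\bar\rho$, both of which hold after enlarging $C_1(n,p)$ since $\alpha(n,p)\beta(n)\geqslant\frac{p}{n(p-n)}$; this is consistent with the threshold $\delta(\Omega)\leqslant\frac{1}{C(n,p)K^{\alpha(n,p)}}$.
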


\begin{remark}
We can improve the proof below to get $P(\Omega)\|\Bf\|_p^n\leqslant  C(n,p)K^\frac{p}{p-n}$.
\end{remark}

\begin{proof}
Let $(x_i)_i$ be a maximal family of points of $\partial\Omega$ such that the balls $B_{x_i}(\gamma\bar{\rho}/2)$ are disjoints in $\R^{n+1}$. Then the family $\bigl(\partial\Omega\cap B_{x_i}(\gamma\bar{\rho})\bigr)_i$ covers $\partial\Omega$. By \eqref{ineq15}, all the balls $B_{x_i}(\gamma\bar{\rho}/2)$ are included in $B_{x_i}\left(\left(\frac{\gamma}{2C_3k^{\frac{p}{p-n}}}+\frac{C_2}{C_1^{\beta}k^{\alpha\beta}}+1\right)R_{\Omega}\right)$ and for $C_1$ and $C_3$ large enough, they are included in $B_{x_\Omega}(3R_\Omega)$. And so the family has at most $(\frac{6R_\Omega}{\gamma\bar{\rho}})^{n+1}\leqslant	C(n,p)K^\frac{(n+1)p}{n(p-n)}$  elements (note that using the fact that $\partial\Omega$ is Hausdorff close to $S_{x_\Omega}(R_\Omega)$ we could replace $K^\frac{(n+1)p}{n(p-n)}$ by the better $K^\frac{p}{p-n}$).

By Theorem \ref{All}, denoting by $u_i$ each corresponding function we then have  $|B|\leqslant\sqrt{n}\frac{|d^2u_i|}{\sqrt{1+|du_i|^2}}$ on $\partial\Omega\cap B_{x_i}(\gamma\bar{\rho})$ and
\begin{align*}
\int_{\partial\Omega\cap B_{x_i}(\gamma\bar{\rho})}|\Bf|^p\dhn&\leqslant \int_{B_0^{\R^n}(\gamma\bar{\rho})}n^{p/2}\frac{|d^2u_i|^p}{(1+|du_i|^2)^\frac{p-1}{2}}\dhn\\
&\leqslant \int_{B_0^{\R^n}(\gamma\bar{\rho})}n^{p/2}|d^2u_i|^p\dhn\leqslant \frac{C(n,p)}{\bar{\rho}^{p-n}}
\end{align*}
from which we get
\begin{align*} P(\Omega)\|\Bf\|_p^n&=P(\Omega)^{1-\frac{n}{p}}\left(\int_{\partial\Omega}|\Bf|^p\dhn\right)^{n/p}\leqslant C(n,p)\left(\frac{P(\Omega)}{\bar{\rho}^n}\right)^{\frac{p-n}{p}}K^{\frac{n+1}{p-n}}\\
&=C(n,p)\left(\frac{P(\Omega)}{R_{\Omega}^n}C_3^nK^{\frac{p}{p-n}}\right)^{\frac{p-n}{p}}K^{\frac{n+1}{p-n}}\leqslant C(n,p)K^{\frac{p+1}{p-n}}
\end{align*}
\end{proof}

Using Duggan's Theorem we now improve the $L^2$ smallness of $Z$ given by Lemma \ref{ZL2} in an $L^\infty$ one.

\begin{lemma}\label{Lipprep2}
Let $p>n$. There exists $C(n,p)>0$ such that for any domain $\Omega$ with smooth boundary $\partial\Omega$ satisfying $P(\Omega)\normlp{\H}{p}^n\leqslant K$ and $\delta(\Omega)\leqslant\frac{1}{C(n,p)K^{\alpha(n,p)}}$, we have
\begin{align}
\sup_{x\in\partial\Omega}|Z_x|&\leqslant C(n,p)K^\frac{1}{n}\delta(\Omega)^\frac{1}{n\alpha(n,p)}\label{ineq16}
\end{align}
Here $\alpha(n,p)$ is the same as in Lemma \ref{Lipprep}.
\end{lemma}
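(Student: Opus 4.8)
The plan is to upgrade the $L^2$ smallness of $Z$ furnished by Lemma \ref{ZL2} into the $L^\infty$ bound \eqref{ineq16}, by combining it with the H\"older control \eqref{ineq13} of $Z$ and the lower area density coming from Duggan's theorem. Up to a translation assume $x_\Omega=0$, so that $Z_x=\frac{x}{|x|}-\nu_x$ and $|Z_x|\leqslant 2$; since $\partial\Omega$ is compact (it is bounded by \eqref{ineq15}) fix $x_0\in\partial\Omega$ with $M:=|Z_{x_0}|=\sup_{x\in\partial\Omega}|Z_x|$. By Lemma \ref{Lipprep}, enlarging $C(n,p)$ if needed, the hypotheses of Theorem \ref{All} hold at $x_0$ for $\bar\rho=\frac{R_\Omega}{C_3(n,p)K^{p/(n(p-n))}}$, so $\partial\Omega\cap B_{x_0}(\gamma\bar\rho)$ is the graph of a map $u$ over $B_0^{\R^n}(\gamma\bar\rho)$ with $u(0)=0$ and $\sup|du|\leqslant c\eta^{\frac{1}{4n}}\leqslant 1$. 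In particular $|u(a)|\leqslant|a|$, hence $B_{x_0}(r)\cap\partial\Omega$ contains the graph of $u$ over $B_0^{\R^n}(r/\sqrt2)$ for every $r\leqslant\gamma\bar\rho$, which gives the density bound
\begin{equation}\label{densLB}
\hn\bigl(B_{x_0}(r)\cap\partial\Omega\bigr)\geqslant\frac{1}{C(n)}\,r^n\qquad\text{for every }r\leqslant\gamma\bar\rho .
\end{equation}

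The next step is to use \eqref{ineq13}: for $y\in B_{x_0}(\gamma\bar\rho)\cap\partial\Omega$ one has $|Z_y-Z_{x_0}|\leqslant C(n,p)\,\bar\rho^{-(1-n/p)}|y-x_0|^{1-n/p}$, hence $|Z_y|\geqslant M/2$ on $B_{x_0}(r)\cap\partial\Omega$ with $r:=\min\bigl(\gamma\bar\rho,\ \bar\rho\,(M/2C(n,p))^{p/(p-n)}\bigr)$. Combining this with \eqref{densLB} and Lemma \ref{ZL2} gives
$$\frac{M^2}{4}\cdot\frac{r^n}{C(n)}\leqslant\int_{B_{x_0}(r)\cap\partial\Omega}|Z|^2\dhn\leqslant\int_{\partial\Omega}|Z|^2\dhn\leqslant C(n)P(\Omega)\delta(\Omega)^{1/2} .$$
Moreover $\bar\rho^n=R_\Omega^n/(C_3^nK^{p/(p-n)})$ and $P(\Omega)\leqslant C(n)R_\Omega^n$ (using $\delta(\Omega)\leqslant 1$), so $\bar\rho^n\geqslant P(\Omega)/(C(n,p)K^{p/(p-n)})$.

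Finally I would distinguish the two cases in the definition of $r$. If $r=\gamma\bar\rho$, so $M\geqslant 2C(n,p)\gamma^{(p-n)/p}=:c_0(n,p)$, the displayed inequality forces $M^2\leqslant C(n,p)K^{p/(p-n)}\delta(\Omega)^{1/2}$, which (since $K\geqslant k(n)$ and $\alpha(n,p)=8p/(p-n)$) is incompatible with $M\geqslant c_0(n,p)$ as soon as the constant $C(n,p)$ in the hypothesis $\delta(\Omega)\leqslant 1/(C(n,p)K^{\alpha(n,p)})$ is taken large enough. Hence $r=\bar\rho(M/2C(n,p))^{p/(p-n)}$, and inserting $r^n=\bar\rho^n(M/2C(n,p))^{np/(p-n)}$ together with the lower bound on $\bar\rho^n$ into the displayed inequality yields
$$M^{\,2+\frac{np}{p-n}}\leqslant C(n,p)\,K^{\frac{p}{p-n}}\,\delta(\Omega)^{1/2},\qquad\text{i.e.}\qquad M\leqslant C(n,p)\,K^{\frac{p}{p(n+2)-2n}}\,\delta(\Omega)^{\frac{p-n}{2(p(n+2)-2n)}},$$
where I used $2+\frac{np}{p-n}=\frac{p(n+2)-2n}{p-n}$. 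Since $p>n$ one checks $\frac{p}{p(n+2)-2n}\leqslant\frac1n$ and $\frac{p-n}{2(p(n+2)-2n)}\geqslant\frac{p-n}{8np}=\frac{1}{n\alpha(n,p)}$; as $\delta(\Omega)\leqslant 1$ and $K\geqslant k(n)$, this estimate implies \eqref{ineq16}.

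I expect the delicate point to be the passage from the fixed Allard scale $\bar\rho$ down to the much smaller scale $r\sim\bar\rho\,M^{p/(p-n)}$ on which $|Z|\geqslant M/2$: it is legitimate only because Duggan's graph representation on $B_{x_0}(\gamma\bar\rho)$ has small gradient, so the density estimate \eqref{densLB} survives on all concentric subballs, and because one must first rule out the degenerate case $r=\gamma\bar\rho$ — equivalently, prove a priori that $M$ is \emph{small}, not merely bounded — which is precisely what pins down the admissible range of $\delta(\Omega)$ in terms of $K$ and avoids circularity in the choice of the constants.
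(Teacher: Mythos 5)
Your argument is correct, and it takes a genuinely different route from the paper's. The paper fixes a scale $\rho'\sim\delta(\Omega)^{1/8n}R_\Omega$ independently of $M:=\sup|Z|$, obtains the lower bound on $\hn(B_x(\rho')\cap\partial\Omega)$ from the density Theorem \ref{equidense}, and then estimates $|Z_x|$ by averaging over $B_x(\rho')\cap\partial\Omega$, splitting into a H\"older-oscillation term (from \eqref{ineq13}) and a Cauchy--Schwarz term controlled by Lemma \ref{ZL2}. You instead choose a self-selecting scale $r\sim\bar\rho(M/C)^{p/(p-n)}$ tuned to the unknown supremum so that $|Z|\geqslant M/2$ on $B_{x_0}(r)\cap\partial\Omega$, get the area lower bound $\hn(B_{x_0}(r)\cap\partial\Omega)\gtrsim r^{n}$ directly from the Duggan graph representation (bypassing Theorem \ref{equidense} entirely), and compare against the global $L^2$ bound of Lemma \ref{ZL2}. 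The payoff is a slightly sharper intermediate estimate: you obtain $M\leqslant C(n,p)K^{p/(p(n+2)-2n)}\delta(\Omega)^{(p-n)/(2(p(n+2)-2n))}$, which one checks dominates the stated bound $CK^{1/n}\delta(\Omega)^{1/(n\alpha)}$ since $2(p(n+2)-2n)\leqslant 8np$ and $p/(p(n+2)-2n)\leqslant 1/n$ for $p>n\geqslant1$. Two small points worth polishing: (i) Duggan's theorem gives $\sup|du|\leqslant c\eta^{1/4n}$, which need not be $\leqslant1$; this is harmless because one can simply replace the factor $1/\sqrt2$ in your density estimate by $1/\sqrt{1+(c\eta^{1/4n})^2}$, a constant depending only on $n,p$. (ii) You must indeed dispatch the degenerate case $r=\gamma\bar\rho$ first, as you do, since otherwise the exponent gain is circular; your use of $\delta(\Omega)\leqslant1/(C_1K^{\alpha})$ with $\alpha=8p/(p-n)$ and $K\geqslant k(n)$ correctly rules it out. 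The ingredients (Lemma \ref{ZL2}, the H\"older control \eqref{ineq13}) are the same as in the paper, so neither argument is more elementary; yours trades the concentration theorem for the graph density and, as a byproduct, improves the exponent of $\delta(\Omega)$ at this intermediate stage (though the paper later refines $\|d\varphi\|_\infty$ further via the Fuglede-type integration in \eqref{gradu}--\eqref{gradf}, so the final Lipschitz and Hausdorff bounds in Theorem \ref{Lipschitz} are unchanged).
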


\begin{proof} Let 
\begin{equation}\label{conditionC4}
C_4(n,p)=\max\bigl(\frac{C(n)}{\gamma},\frac{C_2(n,p)}{\gamma},\frac{1}{\gamma}\left(\frac{4C(n)}{|\B^n|}\right)^\frac{1}{n}\bigr)
\end{equation}
where $C(n)$ is the constant of Theorem \ref{equidense} and $C_2(n,p)$ is the constant of Lemma \ref{Lipprep}.  We set $\rho'=2\gamma C_4(n,p)\delta(\Omega)^\frac{1}{8n}R_\Omega$. 

Assume now that $\delta(\Omega)\leqslant\frac{1}{C_1'(n,p)K^{\alpha(n,p)}}$ where $C_1'\geqslant C_1$. For $C_1'(n,p)$ large enough we have $\delta(\Omega)\leqslant\frac{1}{C_1'(n,p)K^{\alpha(n,p)}}\leqslant\frac{1}{(2C_4C_3)^{8n}K^{\alpha(n,p)}}$ and $\rho'\leqslant \gamma\bar{\rho}$. As explained in the point (2) of the proof of Lemma \ref{Lipprep}, we can assume $C_1'(n,p)$ large enough to get that 
\begin{equation}\label{conditionC1}
\delta(\Omega)^\frac{1}{8n}\leqslant\min\bigl(\frac{\gamma C_4}{C_2},\frac{1}{\gamma C_4}\bigr)
\end{equation}
where $C_3(n,p)$ is the constant used in the proof of Lemma \ref{Lipprep}. For any $x\in\partial \Omega$ and for any $y,z\in\partial\Omega\cap B_x(\rho')$, Inequality \eqref{ineq13} and the value of $\bar{\rho}$ give us
\begin{align*}|Z_y-Z_z|&\leqslant\frac{C(n,p)\eta(n,p)^{1/4n}}{\bar{\rho}^{\frac{p-n}{p}}}|y-z|^{1-\frac{n}{p}}\leqslant C(n,p)K^{1/n}\left(\frac{\rho'}{R_{\Omega}}\right)^{1-\frac{n}{p}}
\end{align*}
Since, $K\geqslant k(n)$, we can assume $C_1'(n,p)$ large enough so that Lemma \ref{ZL2} applies and then for any $x\in\Omega$
\begin{align*}
|Z_x|&\leqslant\frac{1}{\hn( B_x(\rho')\cap\partial\Omega)}\Bigl(\int_{B_x(\rho')\cap\partial\Omega}\bigl|Z_x-Z_y\bigr|\dhn(y)+\int_{B_x(\rho')\cap\partial\Omega}\bigr|Z_y\bigl|\dhn(y)\Bigr)\\
&\leqslant C(n,p)K^\frac{1}{n}\bigl(\frac{\rho'}{R_\Omega}\bigr)^{1-\frac{n}{p}}+\Bigl(\frac{1}{\hn( B_x(\rho')\cap\partial\Omega)}\int_{B_x(\rho')\cap\partial\Omega}|Z_y|^2d\hn(y)\Bigr)^{1/2}\\
&\leqslant C(n,p)K^\frac{1}{n}\left(\frac{\rho'}{R_\Omega}\right)^{1-\frac{n}{p}}+C(n)\left(\frac{P(\Omega)}{\hn( B_x(\rho')\cap\partial\Omega)}\right)^\frac{1}{2}\delta(\Omega)^\frac{1}{4}
\end{align*}
Now let $x'=x_{\Omega}+R_{\Omega}\frac{x-x_{\Omega}}{|x-x_{\Omega}|}\in S_{x_{\Omega}}(R_{\Omega})$. From \eqref{ineq15}, an easy computation shows that $B_{x'}\bigl(\frac{\rho'}{2}\bigr)\subset B_x(\rho')$. Indeed if $y\in B_{x'}\bigl(\frac{\rho'}{2}\bigr)$, then 
$$|x-y|\leqslant||x-x_{\Omega}|-R_{\Omega}|+\frac{\rho'}{2}\leqslant C_2R_{\Omega}\delta(\Omega)^{\beta}+\frac{\rho'}{2}$$
From the choices made in \eqref{conditionC4} and \eqref{conditionC1} we have $\delta(\Omega)^{\beta}\leqslant\frac{\gamma C_4}{C_2}\delta(\Omega)^{1/8n}$ and $|x-y|\leqslant\rho'$.
We then get

\begin{equation}\label{majorZ}
|Z_x|\leqslant C(n,p)K^{1/n}\delta(\Omega)^\frac{1}{n\alpha}+C(n)\left(\frac{P(\Omega)}{\hn( B_{x'}(\rho'/2)\cap\partial\Omega)}\right)^\frac{1}{2}\delta(\Omega)^\frac{1}{4}
\end{equation}

Now \eqref{conditionC4} and \eqref{conditionC1} imply that $\delta(\Omega)^{1/8n}\leqslant 1/\gamma C_4$ and $C_4\geqslant C(n)/\gamma$ which gives  $\frac{\rho'}{2}\in[C(n)\delta(\Omega)^{1/8}R_{\Omega},R_{\Omega}]$. So  we can apply Theorem \ref{equidense}, and since we have $\frac{\rho'}{R_\Omega}\leqslant r(n,p)$ (see (2) in the proof of the previous lemma), we get $\hn\left(B_x\left(\frac{\rho'}{2R_{\Omega}}\right)\cap\S^n\right)\geqslant\frac{|\B^n|}{2}\bigl(\frac{\rho'}{2R_{\Omega}}\bigr)^n$ and
\begin{align*}
\frac{\hn(B_{x'}\left(\frac{\rho'}{2}\right)\cap\partial\Omega)}{P(\Omega)}&\geqslant\frac{\hn(B_{x'}\left(\frac{\rho'}{2}\right)\cap S_{x_{\Omega}}(R_{\Omega}))}{R_{\Omega}^n|\S^n|}-C(n)\delta(\Omega)^{1/4}\\
&\geqslant\frac{\hn(B_{x''}\left(\frac{\rho'}{2R_{\Omega}}\right)\cap S_{x_{\Omega}}(1))}{|\S^n|}-C(n)\delta(\Omega)^{1/4}\\
&\geqslant \frac{|\B^n|}{2} \left(\frac{\rho'}{2R_{\Omega}}\right)^n-C(n)\delta(\Omega)^{1/4}\\
&=\frac{|\B^n|(\gamma C_4(n,p))^n}{2}\delta(\Omega)^\frac{1}{8}-C(n)\delta(\Omega)^{1/4}\\
&\geqslant C(n)\delta(\Omega)^{1/8}
\end{align*}
where $x''=x_{\Omega}+\frac{1}{R_{\Omega}}(x'-x_{\Omega})$ and in the last inequality we used again \eqref{conditionC4}.
Reporting this in \eqref{majorZ} we obtain
\begin{align*}|Z_x|&\leqslant C(n,p)K^{1/n}\delta(\Omega)^\frac{1}{n\alpha(n,p)}+C(n,p)\delta(\Omega)^{3/16}\\
&\leqslant C(n,p)K^{1/n}\delta(\Omega)^\frac{1}{n\alpha(n,p)}+\frac{C(n,p)}{k(n)^{1/n}}K^{1/n}\delta(\Omega)^\frac{1}{n\alpha(n,p)}\\
&\leqslant C_5(n,p)K^{1/n}\delta(\Omega)^\frac{1}{n\alpha(n,p)}
\end{align*}
which gives the desired inequality by putting $C(n,p)=\max(C_1'(n,p),C_5(n,p))$.
\end{proof}

Since we have an upper bound on the second fundamental form, we could also perform a Moser iteration as in \cite{AubGro2} to prove the previous lemma.

Let $\Omega$ be an almost isoperimetric  domain. We consider the map $F:\partial\Omega\longrightarrow S_{x_\Omega}(R_{\Omega})$ defined by
$$F(x)=R_{\Omega}\frac{x-x_\Omega}{|x-x_\Omega|}$$

\begin{prooflipschitz} In this proof, $C(n,p)$ is the constant of the Lemma \ref{Lipprep2}. For more convenience up to a translation we can assume $x_{\Omega}=0$. Under the assumptions of Lemma \ref{Lipprep}, we have $|x|\geqslant\frac{1}{2}R_{\Omega}$.
Hence $F$ is well defined on $\partial\Omega$. Moreover, for any $x\in\partial\Omega$ and $u\in T_x\partial\Omega$, we have $dF_x(u)=\frac{R_{\Omega}}{|x|}\left(u-\frac{\langle x,u\rangle}{|x|^2}x\right)=\frac{R_{\Omega}}{|x|}\left(u-\langle Z_x,u\rangle\frac{x}{|x|}\right)$ and we have
$$|dF_x(u)|^2=\frac{R_{\Omega}^2}{|x|^2}(|u|^2-\langle Z_x,u\rangle^2)$$
Let $D(n,p)\geqslant C(n,p)$ large enough and assume $\delta(\Omega)^{1/2}\leqslant\frac{1}{DK^{\alpha}}$. Since by Inequality \eqref{ineq16} of Lemma \ref{Lipprep2} we have 
$$|Z_x|\leqslant CK^{1/n}\delta(\Omega)^\frac{1}{n\alpha}\leqslant\frac{CK^{1/n}}{{D}^\frac{1}{n\alpha} K^\frac{1}{n}}=\frac{C}{{D}^\frac{1}{n\alpha}}$$
Hence we can assume $\|Z\|_\infty< 1/2$ for $D(n,p)$ large enough,  which infer that $F$ is a local diffeomorphism form $\partial\Omega$ into $S_{0}(R_\Omega)$. Let $\partial\Omega_0$ be a connected component of $\partial\Omega$. Since $\partial\Omega_0$ is compact and $S_{0}(R_\Omega)$ is simply connected, we get that $F$ is a diffeomorphism. Moreover since $||x|-R_{\Omega}|\leqslant C_2 R_{\Omega}\delta(\Omega)^{\beta}$ we have $\left|\frac{R_{\Omega}}{|x|}-1\right|\leqslant 2C_2\delta(\Omega)^{\beta}$ and 
\begin{align}\label{quasiisometry}||dF_x(u)|^2-|u|^2|&\leqslant\left|\frac{R_{\Omega}}{|x|}-1\right|\left|\frac{R_{\Omega}}{|x|}+1\right||u|^2+\frac{R_{\Omega}^2}{|x|^2}|Z_x|^2|u|^2\notag\\
&\leqslant\left(6C_2\delta(\Omega)^{\beta}+2\|Z\|_{\infty}\right)|u|^2\notag\\
&\leqslant\left(\frac{6C_2}{k(n)^{1/n}}\delta(\Omega)^{\beta-\frac{1}{n\alpha}}+2C\right)K^{1/n}\delta(\Omega)^\frac{1}{n\alpha}|u|^2\notag\\
&\leqslant\left(\frac{6C_2}{k(n)^{1/n}}+2C\right)\frac{\delta(\Omega)^\frac{1}{2n\alpha}}{D^{1/n\alpha}}|u|^2\notag\\
&\leqslant C_6(n,p)\delta(\Omega)^\frac{1}{2n\alpha}|u|^2
\end{align}
Now if $\partial\Omega$ as at least $2$ connected components $\partial\Omega_0$ and $\partial\Omega_1$ we have for any $i\in\{0,1\}$
\begin{align*}
\hn(S_{0}(R_{\Omega}))=\int_{\partial\Omega_i}F^{\star}\dhn=\int_{\partial\Omega_i}\frac{|\langle x,\nu_x\rangle|}{|x|}\left(\frac{R_{\Omega}}{|x|}\right)^n\dhn\leqslant\frac{\hn(\partial\Omega_i)}{(1-C_2\delta(\Omega)^{\beta})^n}
\end{align*}
and
\begin{align*}
P(\Omega)\geqslant\hn(\partial\Omega_0)+\hn(\partial\Omega_1)\geqslant2(1-C_2\delta(\Omega)^{\beta})^n\hn(S_{0}(R_{\Omega}))\geqslant2\frac{(1-C_2\delta(\Omega)^{\beta})^n}{1+\delta(\Omega)}P(\Omega)
\end{align*}
Where we have used the fact that $\frac{\hn(\partial\Omega)}{\hn(S_0(R_{\Omega}))}=\frac{I(\Omega)}{I(\B^{n+1})}=1+\delta(\Omega)$. Now we can prove easily that $\frac{(1-C_2\delta(\Omega)^{\beta})^n}{1+\delta(\Omega)}>1/2$ for $D$ great enough and we deduce that $\partial\Omega$ has one connected component.

Actually \ Inequality \ref{quasiisometry}\ gives\ for\ $D$\ great\ enough\ that\ $d_L(\partial\Omega,S_{0}(R_{\Omega}))\leqslant C_6(n,p)\delta(\Omega)^\frac{1}{2n\alpha}$ $=C_6(n,p)\delta(\Omega)^{\frac{16pn}{p-n}}$. But we can improve this bound in order to have sharp estimates with respect to the powers of  $\delta(\Omega)$ involved in the estimates on $d_L$ and $d_H$.

Let $\varphi:S_{0}(R_\Omega)\to\R$ given by $\varphi(w)=\|F^{-1}(w)\|/R_\Omega$. Then we have $\partial\Omega=\{\varphi(w)w,w\in S_{0}(R_\Omega)\}$, $\varphi\geqslant 1/2$ and from \ref{ineq15} $\|\varphi-1\|_{\infty}\leqslant C_2\delta(\Omega)^{\beta}$. 
Moreover for any $u\in T_w S_0(R_{\Omega})$ we have :
$$d\varphi_w(u)=\frac{1}{R_{\Omega}^2}\langle dF_{w}^{-1}(u),w\rangle=\frac{\langle dF_{w}^{-1}(u),Z_{F^{-1}(w)}\rangle}{R_{\Omega}}$$
Consequently $R_{\Omega}|d\varphi_w|\leqslant|dF^{-1}_w||Z_{F^{-1}(w)}|$ and for $D$ great enough we deduce from \ref{quasiisometry} that $|dF^{-1}_w|^2\leqslant\frac{1}{1-C_6\delta(\Omega)^{1/2n\alpha}}\leqslant\frac{1}{2}$ and from \ref{ineq16} we get 
$$R_{\Omega}\|d\varphi\|_{\infty}\leqslant C(n,p)K^{1/n}\delta(\Omega)^{1/n\alpha}$$
Now the second fundamental form $\Bf$ of the boundary can be expressed by the formulae 
$$(F^{-1})^{\star}\Bf=\frac{\varphi\nabla d\varphi-d\varphi\otimes d\varphi-\frac{1}{R_{\Omega}^2}(F^{-1})^{\star}g}{\sqrt{|d\varphi|^2+\frac{\varphi^2}{R_{\Omega}^2}}}$$
which gives
\begin{align*}
|\nabla d\varphi|&\leqslant\frac{1}{\varphi}\left(\sqrt{\|d\varphi\|_{\infty}^2+\frac{\|\varphi\|^2_{\infty}}{R_{\Omega}^2}}|(F^{-1})^{\star}\Bf|+\|d\varphi\|_{\infty}^2+\frac{1}{R_{\Omega}^2}|(F^{-1})^{\star}g|\right)\\
&\leqslant\frac{1}{\varphi}\left(\sqrt{\|d\varphi\|_{\infty}^2+\frac{\|\varphi\|^2_{\infty}}{R_{\Omega}^2}}|\Bf\circ F^{-1}||dF^{-1}|^2+\|d\varphi\|_{\infty}^2+\frac{1}{R_{\Omega}^2}|dF^{-1}|^2\right)\\
&\leqslant\frac{1}{2\varphi}\left(\sqrt{\|d\varphi\|_{\infty}^2+\frac{\|\varphi\|^2_{\infty}}{R_{\Omega}^2}}|\Bf\circ F^{-1}|+2\|d\varphi\|_{\infty}^2+\frac{1}{R_{\Omega}^2}\right)\\
&\leqslant\frac{C(n,p,K)}{R_{\Omega}}\left(|\Bf\circ F^{-1}|+\frac{1}{R_{\Omega}}\right)
\end{align*}
On the other hand
\begin{align*}
\|\Bf\circ F^{-1}\|_p^p&=\frac{1}{\hn(S_0(R_{\Omega}))}\int_{S_0(R_{\Omega})}|\Bf\circ F^{-1}|^p d\hn=\frac{1}{\hn(S_0(R_{\Omega}))}\int_{\partial\Omega}|\Bf|^pF^{\star}d\hn\\
&\leqslant\frac{1}{\hn(S_0(R_{\Omega}))}\int_{\partial\Omega}|\Bf|^p\frac{|\langle x,\nu_x\rangle|}{|x|}\left(\frac{R_{\Omega}}{|x|}\right)^nd\hn\leqslant\frac{2^n\hn(\partial\Omega)}{\hn(S_0(R_{\Omega}))}\|\Bf\|_p^p
\end{align*}
Now $\frac{\hn(\partial\Omega)}{\hn(S_0(R_{\Omega}))}=\frac{I(\Omega)}{I(\B^{n+1})}=1+\delta(\Omega)\leqslant 2$ which gives with  Lemma \ref{SFBound} and the fact that $P(\Omega)^{1/n}=R_{\Omega}|\B^{n+1}|^{1/n+1}I(\Omega)^{1/n}$ 
\begin{align*}
\|\nabla d\varphi\|_p\leqslant C(n,p,K)\left(\frac{1}{R_{\Omega}^2}+\frac{1}{R_{\Omega}P(\Omega)^{1/n}}\right)\leqslant\frac{C(n,p,K)}{R_{\Omega}^2}
\end{align*}
If we set $u:\S^n\to\R$ defined by $u(x)=\varphi(R_\Omega x)-1$ we have for $D$ large enough $\|u\|_{\infty}=\|\varphi-1\|_{\infty}\leqslant C_2\delta(\Omega)^{\beta}\leqslant\frac{3}{20(n+1)}$ and $\|du\|_{\infty}=R_{\Omega}\|d\varphi\|_{\infty}\leqslant CK^{1/n}\delta(\Omega)^{1/n\alpha}\leqslant 1/2$ and so $\Omega$ is a nearly spherical domain in the sense of Fuglede. 

Moreover since $\|\nabla du\|_p=R_{\Omega}^2\|\nabla d\varphi\|_p\leqslant C(n,p,K)$ we have $\|du\|_{W^{1,p}}\leqslant C(n,p,K)$ and by the Campanato-Morrey estimate, we then get for any $x,x_0\in\S^n$ that
$$\frac{||du_x|-|du_{x_0}||}{d_{\S^n}(x,x_0)^{1-\frac{n}{p}}}\leqslant C(n,p,K)$$
and choosing $x_0$ such that $|du_{x_0}|=\|du\|_\infty$ we have
$$|du_x|\geqslant \|du\|_\infty-C(n,p,K)(d_{\S^n}(x,x_0))^{1-\frac{n}{p}}$$
Let $r_0:=\bigl(\frac{\|d u\|_\infty}{C(n,p,K)}\bigr)^ \frac{1}{1-\frac{n}{p}}$. We can assume $r_0<\frac{\pi}{2}$ by taking $C(n,p,K)$ large enough. Integrating the above inequality on the ball of $\S^n$ of center $x_0$ and radius $r_0$ and using the estimates of \cite{Fug1} and then Inequality (I.a) of \cite{Fug1} we get that
\begin{align}\label{gradu}
10\delta(\Omega)&\geqslant\|du\|_2^2\geqslant\frac{1}{|\S^n|}\int_{B_{x_0}^{\S^n}(r_0)}|du|^2d\hn\notag\\
&\geqslant\frac{|\S^{n-1}|}{|\S^n|}\int_{0}^{r_0}(\|du\|_{\infty}-C(n,p,K)t^{1-\frac{n}{p}})^2\sin^{n-1}tdt\notag\\
&\geqslant\left(\frac{2}{\pi}\right)^{n-1}\frac{|\S^{n-1}|}{|\S^n|}\int_{0}^{r_0}(\|du\|_{\infty}-C(n,p,K)t^{1-\frac{n}{p}})^2t^{n-1}tdt\notag\\
&\geqslant\frac{1}{C'(n,p,K)}\|du\|_{\infty}^{2+\frac{n}{1-\frac{n}{p}}}
\end{align}

From which we infer that 
\begin{equation}\label{gradf}
(R_\Omega\|d\varphi\|_\infty)^{2+\frac{n}{1-\frac{n}{p}}}=\|du\|_{\infty}^{2+\frac{n}{1-\frac{n}{p}}}\leqslant C(n,p,K)\delta(\Omega)
\end{equation}
Using Inequalities (I.b) of \cite{Fug1} and the above inequality \eqref{gradf}, we get that
\begin{align*}d_H(\partial\Omega,S_{x_\Omega}(R_\Omega))&=R_{\Omega}\|u\|_\infty\leqslant R_{\Omega}C(n)\|du\|_{\infty}^{\frac{n-2}{n}}\delta(\Omega)^{1/n}\\
&\leqslant C(n,p,K)R_\Omega\delta(\Omega)^\frac{2p-n}{2p-2n+np}
\end{align*}
for $n\geqslant 3$
and $d_H(\partial\Omega,S_{x_\Omega}\leqslant C(p,K)R_{\Omega} (-\delta(\Omega)\ln\delta(\Omega))^\frac{1}{2}$ for $n=2$.

Now since for any $x\in\partial\Omega$, $F^{-1}(x)=x\varphi(x)$, and so $|dF_{R_\Omega x}^{-1}(v)|^2=(1+u(x))^2|v|^2+(du_x(v))^2$, we can use the previous estimates on $u$ to obtain
$$||dF_{R_\Omega x}^{-1}(v)|^2-|v|^2|\leqslant(\|du\|_{\infty}^2+2\|u\|_{\infty}+\|u\|_{\infty}^2)|v|^2\leqslant C(n,p,K)\delta(\Omega)^{\frac{2p-2n}{2p-2n+np}}|v|^2$$
From this and the definition of the Lipschitz distance we conclude that for $\delta(\Omega)$ small enough
\begin{align*}
d_L(\partial\Omega,S_{x_{\Omega}}(R_{\Omega}))&\leqslant(|\ln\dil(F)|+|\ln\dil(F^{-1})|)\\
&\leqslant\max(|\ln(1+C\delta(\Omega)^{\frac{2p-2n}{2p-2n+np}}|,|\ln(1-C\delta(\Omega)^{\frac{2p-2n}{2p-2n+np}})|)\\
&\leqslant C(n,p,K)\delta(\Omega)^{\frac{2p-2n}{2p-2n+np}}
\end{align*}
where for any diffeomorphism $f$ from $\partial\Omega$ into $S_{0}(R_{\Omega})$, $\dil(f)=\displaystyle\sup_{x\in\partial\Omega}|df(x)|$ (for more details on the Lipschitz distance see \cite{Gro}).\end{prooflipschitz}

\

We end this section by the construction of simple examples that prove the sharpness of Theorem \ref{Lipschitz} with respect of the power of delta involved in our estimates:

The sharpness in the case $n=3$ is already contained in Fuglede's work \cite{Fug1}. In the case $n\geqslant 3$, let $\varphi:\R^n\to\R$ the function defined by
\begin{equation}
\varphi(x)=\left\{\begin{array}{ll}0&\mbox{ if }|x|\geqslant r:=\delta^\frac{p}{2p-2n+pn}\\
\frac{1}{3}(r-|x|)^{2-\frac{n}{p}}&\mbox{ if }\frac{r}{2}\leqslant |x|\leqslant r\\
\frac{1}{3}(2(\frac{r}{2})^{2-\frac{n}{p}}-|x|^{2-\frac{n}{p}})&\mbox{ if }|x|\leqslant r/2
\end{array}\right.
\end{equation}
$\varphi$ is a $C^{1,1-\frac{n}{p}}$ function on $\R^n$ with 
\begin{equation}
\nabla\varphi(x)=\left\{\begin{array}{ll}0&\mbox{ if }|x|\geqslant r\\
\frac{1}{3}(\frac{n}{p}-2)(r-|x|)^{1-\frac{n}{p}}\frac{x}{|x|}&\mbox{ if }\frac{r}{2}\leqslant |x|\leqslant r\\
\frac{1}{3}(\frac{n}{p}-2)|x|^{1-\frac{n}{p}}\frac{x}{|x|}&\mbox{ if }|x|\leqslant r/2
\end{array}\right.
\end{equation}
from which we infer that $\|\varphi\|_\infty= C(n,p)\delta^\frac{2p-n}{2p-2n+pn}$, $\|d\varphi\|_\infty\leqslant C(n,p)\delta^\frac{p-n}{2p-2n+pn}$ and $\frac{1}{C(n,p)}\delta\leqslant\int_{\R^n}|d\varphi|^2\dhn\leqslant C(n,p)\delta$. $\varphi$ can be transposed to a function defined on $\S^n$ (via the exponential map at a fixed point of $\S^n$) for $\delta$ small enough. The previous estimates will be preserved and the surface $S_\varphi=\{(1+\varphi(x))x,x\in\S^n\}$ will be an almost spherical surface in the sense of Fuglede. In particular, according to the inequality (I.a) of \cite{Fug1}, the isoperimetric deficit of the domain $\Omega_\varphi$ bounded by $S_\varphi$ satisfies $\frac{\delta}{C(n,p)}\leqslant\delta(\Omega_\varphi)\leqslant C(p,n)\delta$. Since $d_H(S_\varphi,S_{x_\Omega}(R_\Omega))=\|\varphi\|_\infty$, and for any $q<p$ there exists $K(n,q)$ such that $\|\nabla d\varphi\|_q\leqslant K(n,q)$ for any $\delta>0$, we infer that $\|B\|_q\leqslant K(n,q)$ for any $\delta>0$. These examples prove that the estimate of Theorem \ref{Lipschitz} are sharp with respect to the powers of $\delta$ involved in the estimate on $d_H$. An easy computation show that it is the same way for the estimate on $d_L$.

\section{Almost extremal domains for Chavel's inequality}
\begin{proofchavelpinc}
Let $\Sigma$ be an embedded compact hypersurface bounding a domain $\Omega$ in $\R^{n+1}$ and let $X$ be the vector position. Up to a translation we can assume that $\displaystyle\int_{\Sigma} X\dhn=0$ which allows us to use the variational characterization. Then
\begin{align*}(n+1)^2\frac{|\Omega|^2}{\hn(\Sigma)^2}&=\left(\frac{1}{\hn(\Sigma)}\int_{\Omega}\frac{1}{2}\Delta|X|^2\dhnpi\right)^2=\left(\frac{1}{\hn(\Sigma)}\int_{\Sigma}\scal{X}{\nu}\dhn\right)^2\\
&\leqslant\normlp{X}{1}^2\leqslant\normlp{X}{2}^2\leqslant\frac{\|dX\|_2^2}{\lambda_1^\Sigma}=\frac{n}{\lambda_1^{\Sigma}}\\
&=\frac{(n+1)^2\hn(\Sigma)^{2/n}}{I(\Omega)^{2\left(\frac{n+1}{n}\right)}}(1+\gamma(\Omega))=(n+1)^2\frac{|\Omega|^2}{\hn(\Sigma)^2}(1+\gamma(\Omega))
\end{align*}
Let us put $\rho_{\Omega}:=(n+1)\frac{|\Omega|}{\hn(\Sigma)}$. From the inequalities above we deduce that
$$|\normlp{X}{2}^2-\rho_{\Omega}^2|\leqslant\rho_{\Omega}^2\gamma(\Omega)\ \ \text{and}\ \ |\normlp{X}{1}^2-\rho_{\Omega}^2|\leqslant\rho_{\Omega}^2\gamma(\Omega) $$
which gives for $\gamma(\Omega)<1$
\begin{align*}\normlp{|X|-\rho_{\Omega}}{2}^2&=\normlp{X}{2}^2-2\rho_{\Omega}\normlp{X}{1}+\rho_{\Omega}^2\\
&\leqslant\rho_{\Omega}^2((1+\gamma(\Omega))-2(1-\gamma(\Omega))^{1/2}+1)\leqslant 3\rho_{\Omega}^2\gamma(\Omega)
\end{align*}
Now by the divergence theorem to the field $Z=(|X|-\rho_\Omega)\frac{X}{|X|}$, we get
\begin{align*}
|\Omega\setminus B_0(\rho_{\Omega})|&\leqslant\int_{\Omega\setminus B_0(1)(\rho_{\Omega})}\div Z\dhnpi=\int_{\partial\Omega\setminus B_0(\rho_\Omega)}(|X|-\rho_\Omega)\langle \frac{X}{|X|},\nu\rangle\dhn\\
&\leqslant\hn(\Sigma)\bigl\||X|-\rho_\Omega\bigr\|_1\leqslant 3^{1/2}\hn(\Sigma)\rho_{\Omega}\gamma(\Omega)^{1/2}
\end{align*}
Now since $\rho_{\Omega}=\frac{1}{1+\delta(\Omega)}R_{\Omega}\leqslant R_{\Omega}$ and $|B_0(R_{\Omega})|=|\Omega|$ we have
$$|B_0(\rho_{\Omega})\setminus\Omega|\leqslant|\Omega\setminus B_0(\rho_{\Omega})|\leqslant 3^{1/2}\hn(\Sigma)\rho_{\Omega}\gamma(\Omega)^{1/2}$$
It follows that $\bigl||\Omega|-|B_0(\rho_{\Omega})|\bigr|\leqslant 2 (3^{1/2})\hn(\Sigma)\rho_{\Omega}\gamma(\Omega)^{1/2}$. From the expression of $\rho_{\Omega}$, $I(\Omega)$ and the fact that $I(B_0(1))=(n+1)|B_0(1)|^{\frac{1}{n+1}}$, the last inequality can be rewritten as
$$\left|1-\frac{I(B_0(1))^{n+1}}{I(\Omega)^{n+1}}\right|\leqslant 2(n+1) 3^{1/2}\gamma(\Omega)^{1/2}$$
which gives the desired result.
\end{proofchavelpinc}


\begin{thebibliography}{99}
\bibitem{AmbFusPal}{\sc L. Ambrosio, N. Fusco, D. Pallara}, {\em Functions of Bounded Variation and Free Discontinuity Problems}, Oxford Mathematical Mongraphs. The Clarendon Press, Oxford University Press, New York, (2000).
\bibitem{AubGro1} {\sc E. Aubry, J.-F. Grosjean}, {\em Spectrum of hypersurfaces with small extrinsic radius or large $\lambda_1$ in Euclidean spaces}, J. Funct. Anal., \textbf{271}, No 5, (2016), 1213-1242.
\bibitem{AubGro2} {\sc E. Aubry, J.-F. Grosjean}, {\em Metric shape of hypersurfaces with small extrinsic radius or large $\lambda_1$}, preprint (2012) ,  arXiv:1210.5689.
\bibitem{Ber} {\sc F. Bernstein}, {\em \"Uber die isoperimetrische Eigenschaft des Kreises auf der Kugeloberfl\"ache und in der Ebene}, Math. Ann., {\bf 60}, (1905), 117-136.
\bibitem{Bon1} {\sc T. Bonnesen},   {\em \"Uber die isoperimetrische Defizite ebener Figuren},  Math. Ann., {\bf 91}, (1924), 252-268.
\bibitem{Car} {\sc G. Carron}, \textit{Stabilit\'e isop\'erim\'etrique}, Math. Ann., \textbf{306}, (1996), 323-340.
\bibitem{Cha} {\sc I. Chavel}, {\em On a Hurwitz' method in isoperimetric inequalities}, Proc. Amer. Math. Soc., {\bf 71}, No 2, (1978), 275-279.
\bibitem{CicLeo} {\sc M. Cicalese, G. P. Leonardi}, {\em A Selection Principle for the Sharp Quantitative Isoperimetric Inequality}, Arch. Rat. Mech. Anal. {\bf 206}, No 2, (2012), 617-643.
\bibitem{Dug} {\sc J. P. Duggan}, {\em $W^{2,p}$ regularity for varifolds with mean curvature} Comm. in Partial Differential Equations, {\bf 11}, No 9, (1986), 903-926. 
\bibitem{FigMagPra} {\sc A. Figalli, F. Maggi, A. Pratelli}, {\em A mass transportation approach to quantitative isoperime\-tric inequalities},  Invent. Math.,  {\bf 182}, (2010), 167-211.
\bibitem{Fug1} {\sc B. Fuglede}, {\em Stability in the isoperimetric problem for convex or nearly spherical domains in $\R^n$}, Trans. Am. Math. Soc., {\bf 314}, (1989), 619-638.
\bibitem{Fug2}  {\sc B. Fuglede}, {\em Bonnesen's inequality for isoperimetric deficiency of closed curves in the plane}, Geom. Dedicata, {\bf 38}, (1991), no. 3, 283-300.
\bibitem{FusGelPis}{\sc N. Fusco, M.S. Gelli, G. Pisante}, {\em On a Bonnesen type inequality involving the spherical deviation},  J. Math. Pures Appl. , (9), {\bf 98}, (2012), no. 6, 616-632.
\bibitem{FusJul}{\sc N. Fusco, V. Julin}, {\em A strong form of the quantitative isoperimetric inequality}, arXiv: 1111.4866v1.
\bibitem{FusMagPrat}{\sc N. Fusco, F. Maggi, A. Pratelli}, {\em The sharp quantitative isoperimetric inequality}, Ann. of Math., (2), {\bf 168}, (2008), no. 3, 941-980.
\bibitem{Gro}{\sc M. Gromov}, {\em Structures m\'etriques pour les vari\'et\'es riemanniennes}, Cedic/Fernand Nathan, (1981).
\bibitem{Hal}{\sc R. R. Hall}, {\em A quantitative isoperimetric inequality in n-dimensional space}, J. Reine Angew. Math. {\bf 428}, (1992), 161-176.
\bibitem{HalHayWei} {\sc R. R. Hall, W. K. Hayman, A. Weitsman} , {\em On asymmetry and capacity}, 
J. Analyse Math. {\bf 56}, (1991), 87-123.
\bibitem{Sim} {\sc L. Simon}, {\em Lectures on Geometric Measure Theory}, Proceedings of the Centre for Mathematical Analysis, v. 3. (Canberra AUS: Centre for Mathematics and its Applications, Mathematical Sciences Institute, The Australian National University, 1984).
\end{thebibliography}
\end{document}